\documentclass[12pt]{amsart}

\usepackage{amsmath, amssymb, amsthm, amsfonts, amscd}

\input xy
\xyoption{all}

\usepackage{hyperref}
\usepackage{graphicx}
\usepackage{color}
%
%

\usepackage{tikz-cd}

\makeatletter
\def\@tocline#1#2#3#4#5#6#7{\relax
  \ifnum #1>\c@tocdepth 
  \else
    \par \addpenalty\@secpenalty\addvspace{#2}%
    \begingroup \hyphenpenalty\@M
    \@ifempty{#4}{%
      \@tempdima\csname r@tocindent\number#1\endcsname\relax
    }{%
      \@tempdima#4\relax
    }%
    \parindent\z@ \leftskip#3\relax \advance\leftskip\@tempdima\relax
    \rightskip\@pnumwidth plus4em \parfillskip-\@pnumwidth
    #5\leavevmode\hskip-\@tempdima
      \ifcase #1
       \or\or \hskip 3em \or \hskip 6em \else \hskip 9em \fi%
      #6\nobreak\relax
    \hfill\hbox to\@pnumwidth{\@tocpagenum{#7}}\par
    \nobreak
    \endgroup
  \fi}
\makeatother


%
%

\numberwithin{equation}{section}

\theoremstyle{plain}
\newtheorem{theorem}[equation]{Theorem}
\newtheorem{lemma}[equation]{Lemma}

\newtheorem{proposition}[equation]{Proposition}
\newtheorem{corollary}[equation]{Corollary}

\theoremstyle{definition}
\newtheorem{definition}[equation]{Definition}

\newtheorem{remark}[equation]{Remark}

\newtheorem{example}[equation]{Example}

\newtheorem{assumption}[equation]{Assumption}



\usepackage[margin=1in,marginparwidth=0.8in, marginparsep=0.1in]{geometry}



\def\AA{\mathbb{A}}

\def\CC{\mathbb{C}}

\def\GG{\mathbb{G}}

\def\LL{\mathbb{L}}

\def\PP{\mathbb{P}}

\def\RR{\mathbb{R}}
\def\R{\mathbb{R}}

\def\VV{\mathbb{V}}

\def\ZZ{\mathbb{Z}}


\def\cC{\mathcal{C}}
\def\cD{\mathcal{D}}

\def\cL{\mathcal{L}}
\def\cM{\mathcal{M}}
\def\cN{\mathcal{N}}

\def\cS{\mathcal{S}}

\def\cZ{\mathcal{Z}}


\def\bL{\mathbf{L}}

\def\bT{\mathbf{T}}
\def\bU{\mathbf{U}}

\def\bW{\mathbf{W}}





\def\rT{\mathrm{T}}





\newcommand{\Coh}{\textup{Coh}}

\newcommand\Fuk{\textup{Fuk}}

\newcommand\Mod{\textup{Mod}}

\newcommand\Spec{\textup{Spec}}

\newcommand\Hom{\textup{Hom}}




\newcommand\nc{\newcommand}
\nc\on{\operatorname}
\nc\ol{\overline}
\nc\ul{\underline}

\nc\oo{\infty}

\nc\Cone{\mathit{Cone}}
\nc\ssupp{\mathit{ss}}
\nc\risom{\stackrel{\sim}{\to}}
\nc\Sh{\mathit{Sh}}
\nc\un{\diamondsuit}
\nc\orient{\mathit{or}}
\nc\sing{\mathit{sing}}
\nc\MF{\on{MF}}
\nc\Log{\on{Log}}
\nc\Arg{\on{Arg}}
\nc\inthom{\mathit{Hom}}
\nc\colim{\on{colim}}

\nc\wdv{W_{\Delta^\vee}}
\nc\trt{\widetilde{\on{trop}}}
\nc\dtmir{\partial\bT^{mir}}
\nc\ttmir{\partial\bT^{mir}_{loc}}
\nc\FS{\on{FS}}
\nc\mshw{\mu sh^c}


\nc\conv{\mathit{conv}}
\nc\trad{\mathit{inf}}
\nc\wrap{\mathit{wr}}



\nc\lvl{X_0}
\nc\lcs{X_0^\vee}
\nc\crr{\color{red}}
\nc\tcl{\widetilde{\cL}}
\nc\LGr{\on{LGr}}
\nc\Conv{\on{Conv}}

\DeclareMathOperator{\Exit}{Exit}

\newcommand{\boing}{\mbox{$ \bigcirc \!\! \bullet$}}

\newcommand{\mapsfrom}{\mathrel{\reflectbox{\ensuremath{\mapsto}}}}

\begin{document}


\title[Homological mirror symmetry at large volume]{Homological mirror symmetry at large volume}
\author{Benjamin Gammage}
\address{Department of Mathematics, Harvard University, Cambridge, MA 02138, USA}
\email{gammage@math.harvard.edu}

\author{Vivek Shende}
\address{Centre for Quantum Mathematics, SDU, Campusvej 55, 5230 Odense M, Denmark}
\email{vivek.vijay.shende@gmail.com}

\begin{abstract}
	A typical large complex-structure limit for mirror symmetry consists of toric varieties glued
	to each other along their toric boundaries.   Here we construct the mirror large volume limit
	space as a Weinstein symplectic manifold.  We prove homological mirror symmetry: the category
	of coherent sheaves on the first space is equivalent to the Fukaya category of the second. 
	
	Our equivalence intertwines the Viterbo restriction maps for a generalized pair-of-pants cover of the symplectic manifold 
	with the restriction of coherent sheaves for a certain affine cover of the algebraic variety. 
	We deduce a posteriori a local-to-global principle conjectured by Seidel
	--- certain diagrams of Viterbo restrictions are cartesian --- by passing Zariski descent through our mirror symmetry result.
\end{abstract}

\maketitle


\tableofcontents


\newpage

\section{Introduction}

Homological mirror symmetry concerns the existence of isomorphisms 
$$\mathrm{Fuk}(X) = \mathrm{Coh}(Y)$$
between the Fukaya category of a symplectic manifold and the category of coherent sheaves on an algebraic variety \cite{Kontsevich-ICM}. 
It serves as an underlying explanation for various matchings of holomorphic curve invariants
of $X$ with Hodge-theoretic invariants of $Y$, which are recovered from the categorical statement by studying
Hochschild homology and related structures \cite{Costello, KKP, Ganatra-Perutz-Sheridan, Caldararu-Tu}.

The model case in which mirror symmetry may be expected to hold is when $X$ and $Y$ admit dual ``SYZ'' Lagrangian torus fibrations over a 
common base \cite{SYZ}.  Indeed, in this case a local system on a torus fiber of $X$ yields both an object of $\Fuk(X)$ and also 
a point on the dual torus (hence a skyscraper sheaf in $\Coh(Y)$).  However, for general $X, Y$ one will need fibrations
with singularities, which are hard to construct, hard to dualize, and hard to study \cite{Kontsevich-Soibelman}. 

Mirror symmetry is best understood when the algebraic variety is infinitesimally near in moduli to a ``large complex structure limit.''  
For example, for a hypersurface in projective space, this means being close to a union of hyperplanes.  
A natural and very general setting for studying such phenomena is that of toric degenerations \cite{GS-log1, GS-log2}.  In the present article
we will be interested for the most part solely in the central fiber of such a degeneration: our $Y$ will be a union of toric
varieties glued along their toric boundaries.  Note that as $Y$ has singularities, 
the derived Hom spaces between coherent sheaves may be unbounded even if $Y$ is compact.  

A choice of ample line bundle on $Y$ gives moment maps to its toric components, and hence a 
fibration  $\sigma: Y \to \Psi$
over some space $\Psi$ glued from their moment polytopes: this fibration should be understood as the SYZ fibration. 
Traditionally, the base $\Psi$ is a topological manifold (but see Example \ref{ex:3A1}, in which we consider the case of three lines meeting at a point).
Note that $\Psi$ carries naturally an integral affine structure on each face of these polytopes --- that is, along the tangent directions to the face, but not in their normal directions.\footnote{Gross and Siebert argue
in \cite{GS-log1, GS-log2} that appropriate deformations of  $Y$ to a smooth variety  correspond to extensions of the integral affine structure 
over the complement of a codimension-2 discriminant in $\Psi$.  As we are interested solely in the central fiber, this will play no role here.}  
The space $\Psi$ also carries a sheaf of lattices $\mathrm{R}^1 \sigma_* \ZZ$.  
The SYZ picture suggests that there should be an embedding  $\mathrm{R}^1 \sigma_* \ZZ \hookrightarrow \rT^*\Psi$ and 
$X = \rT^*\Psi / \mathrm{R}^1 \sigma_* \ZZ$. 
(Note that this will have noncompact fibers over the positive-codimensional strata of $\Psi$.) 

However, we will prefer a dual picture.  
As we are only interested in the complex (rather than K\"ahler) geometry of $Y$, it is not natural to require the  toric varieties in $Y$ be polarized or even projective. 
Instead of a moment polytope, we associate to a toric variety its fan, the collection of rational polyhedral
cones of toric cocharacters with the same limit point.  We glue together these fans into a  stratified space $\Phi$, whose 
$k$-dimensional strata correspond to $k$-dimensional cones in fans of components of $Y$. 
When the SYZ base $\Psi$ is well-defined, the space $\Phi$ is dual to it in a sense that can be made precise in terms of the discrete Legendre transform; at the topological level, this duality entails an anti-isomorphism between the stratification posets of $\Psi$ and $\Phi$. In duality to the situation for $\Psi,$ each $k$-stratum in $\Phi$ has an integral affine structure {\em only in its  normal directions}. 

In Section \ref{sec: fanifolds}, we make precise the notion of a stratified manifold where the normal geometry to each stratum is a fan. 
We will show that these {\em fanifolds} provide the organizing topological and discrete data for homological mirror symmetry at large volume.  

In Section \ref{sec:lcs}, we explain how a fanifold encodes a gluing of toric varieties and organizes functorialities and descent
among their categories of coherent sheaves.  More precisely, we describe a functor $\bT$ from fanifolds to algebraic spaces
and define $Y = \bT(\Phi)$.  

As we have mentioned, the SYZ picture indicates that $X$ should be noncompact.  As $Y$ is singular, 
there are necessarily infinite dimensional (derived) Hom spaces in $\Coh(Y)$, which will correspond to the infinite-dimensional Hom spaces in the {\em wrapped} 
Fukaya category of $X$. In order to define the wrapped category, we require not only the data of the symplectic manifold $X$,
but the additional choice of a conical primitive at least in the complement of a compact set.  In fact, $X$ will be exact:  
in Section \ref{sec:lvl} we explain how to construct for any $\Phi$ a Weinstein manifold $X=\bW(\Phi)$.  

Our construction of $\bW(\Phi)$ is guided by the idea of gluing together mirror symmetry for toric varieties.  
From \cite{FLTZ2, Ku} we know that the mirror category to a toric variety
with fan $\Sigma = \bigcup \sigma$ is controlled by the conic Lagrangian 
$\LL(\Sigma) := \bigcup \sigma^\perp \times \sigma$ in the conormal bundle to a torus.
(Moreover, when $\Sigma$ is simplicial, it is shown in \cite{GS17, Zhou-skel} that $\LL(\Sigma)$ is the relative skeleton of 
the Liouville sector associated to the expected Hori-Vafa superpotential.)  
In that case, the projection to the cotangent fiber
gives a stratified map $\LL(\Sigma) \to \Sigma$; the guiding principle behind our construction of $\bW(\Phi)$ is 
that its skeleton should have an analogous map $\LL(\Phi) \to \Phi$.  

It remains to prove homological mirror symmetry, which we accomplish as follows.
By \cite{GPS3} we may calculate $\bW(\Phi)$ in terms of the sheaf of categories of microsheaves (as defined in \cite{shende-microlocal, NS20}) over $\LL(\Phi)$; 
pushing this forward, we obtain a constructible sheaf of categories over $\Phi$.  
Meanwhile by \cite[Chap. 8.A, Thm. A.1.2]{GR2}\footnote{The importance 
of this result for mirror symmetry was first recognized in \cite{Nadler-wrapped}.}  we may calculate 
the coherent sheaves over $\bT(\Phi)$ in terms of a limit of categories of coherent sheaves on toric varieties; 
in other words, $\Coh(\bT(\Phi))$ is computed as global sections of a second sheaf of categories over $\Phi$.  The main calculation of \cite{GS17} shows that these 
two sheaves of categories are locally equivalent. 
To establish a global equivalence, we will need a way of calculating microsheaf categories from local pieces; 
in Section \ref{sec: microsheaves}, we explain how the fact 
that $\bW(\Phi)$ has a cover by cotangent bundles in which $\LL(\Phi)$ is conical
makes this calculation possible.

We state our mirror theorems in Section \ref{sec:hms} and prove them there under the hypothesis that all fans are smooth; 
the following Section \ref{sec: singular} removes this hypothesis.  
All the results of the article in fact hold in the setting of stacky fans; for expository reasons we restrict ourselves to ordinary fans 
until Section \ref{sec: singular}, where we explain the minor adjustments required to work in the stacky case.

A key feature of our mirror symmetry result is that it interwines certain Viterbo restriction functors of Fukaya categories with 
restriction of coherent sheaves to Zariski-open subsets.  This is of paramount importance to the problem of
deforming our mirror symmetry result to one for smooth compact manifolds, as we explain in Section \ref{sec: epilogue}.

\vspace{2mm} 

{\bf Categorical notions and notations.}  For foundations on DG categories, we refer to the development in \cite{GR1}.  

We write ${}^* \mathrm{DG}$ for the category of colimit-complete dg categories and colimit-preserving functors; the notation reminds us
that all morphisms are left adjoints when viewed as morphisms of dg categories (though their adjoints will not in general live in 
${}^* \mathrm{DG}$).  Similarly, we write $\mathrm{DG}^*$ for the category of colimit-complete dg categories and limit-preserving functors. 
Note that taking adjoints gives an identification $({}^* \mathrm{DG})^{op} = \mathrm{DG}^*$, so that
 colimits in ${}^* \mathrm{DG}$ can be computed as limits in $\mathrm{DG}^*$, which are simply limits in the category of categories. 

We write $\mathrm{dg}$ for the category of small dg categories.  Taking Ind-objects (or equivalently, passing to module categories) gives a full embedding 
$\mathrm{dg} \to {}^* {}^* \mathrm{DG}$ into the category of colimit-complete dg categories and 
functors which preserve colimits and compact objects (equivalently, functors which are left adjoints of left adjoints).  The image of this embedding is the compactly generated categories. 

We always use the least decorated name for the presentable variant of a DG category.  Thus we write $\Coh$ for what 
is termed $\mathrm{IndCoh}$ in \cite{GR1, GR2}, and $\Fuk$ for what would elsewhere be called the category of modules
over the Fukaya category. 

\vspace{2mm} 

{\bf Acknowledgements.} BG is supported by an NSF postdoctoral fellowship, DMS-2001897. 
VS is partially supported by NSF CAREER DMS-1654545.

\section{Fanifolds}  \label{sec: fanifolds}
In this section, we make precise the notion of a stratified manifold for which the geometry normal to each stratum is equipped with 
the structure of a fan.  

Let us fix some notation for stratified spaces.  We will consider only spaces which are stratified by with finitely many strata and which are conical in the complement
of a compact set. For such a stratified space $\cS$, we write $\overline{\cS}$ for the natural compactification.

For a stratified space $\cS$, we write $\Exit(\cS)$ for the exit path category of $\cS$. 
These serve to organize constructible sheaves, which are equivalent to functors from $\Exit(\cS)$, and constructible cosheaves,
which are equivalent to functors from $\Exit(\cS)^{op}$. For a stratum $F,$ we write $\Exit_F(\cS)$ for the category of exit paths starting at $F$ contained inside a sufficiently small neighborhood $Nbd(F)$.

Consider a stratified space $\cS$ which is given as a germ of a closed subset in a manifold and whose strata are smooth submanifolds.  
We will express properties of $\cS$ in terms of a choice of 
ambient manifold $\cM$, although they will only depend on the germ of $\cS.$
Fix a stratum $F$ of $\cS$. 
Taking deformation to the normal cone, we obtain a stratification of the normal cone $C_F \cS \subset \rT_F \cM$.  
We say that $\cS$ is {\em smoothly normally conical} if some choice of tubular neighborhood 
$\rT_F \cM \to \cM$ induces locally near $F$ a stratified diffeomorphism $C_F \cS \to \cS$, inducing the identity $C_F \cS \to C_F \cS$
upon deformation to the normal cone.  

\begin{example}
A typical example of a smoothly normally conical stratification is a polytopal decomposition of a vector space; 
a typical nonexample is the cusp $y^2 = x^3$. 
\end{example}

\begin{definition}We denote by $\mathrm{Fan}^{\twoheadrightarrow}$ a category whose objects are pairs $(M, \Sigma),$ where $M$ is a lattice and $\Sigma$ is stratified by 
finitely many rational polyhedral cones in $M \otimes \R$.  A map $(M, \Sigma) \to (M', \Sigma')$ in $\mathrm{Fan}^{\twoheadrightarrow}$ is 
the data of a cone $\sigma \in \Sigma$ and an isomorphism $M / \mathrm{Span}(\sigma) \cong M'$, such that $\Sigma'$ consists of the images
of cones in $\Sigma$ whose closures contain $\sigma$; we may therefore write $\Sigma'   = \Sigma / \sigma$.
\end{definition}
\begin{remark}
There are other interesting notions of morphisms of fans, which is why we use the decoration $\twoheadrightarrow$. 
\end{remark}

We may view a fan $\Sigma$ as a space stratified by its cones; 
evidently, it is smoothly normally conical.  We have a natural identification of posets: 
\begin{eqnarray*} 
\Exit(\Sigma) & \cong & \mathrm{Fan}^{\twoheadrightarrow}_{\Sigma / } \\
\sigma & \mapsto & [\Sigma \mapsto \Sigma/\sigma]
\end{eqnarray*}
In addition, the normal geometry to $\sigma$ is that of
the fan $\Sigma / \sigma$.   We wish to study stratified spaces whose normal geometry
has this local model.

\begin{definition}\label{defn:fanifold}
  A {\em fanifold} is a smoothly normally conical stratified space $\Phi \subset \cM$, equipped with the following data: 
   \begin{itemize}
   \item A functor $\Exit(\Phi) \to \mathrm{Fan}^{\twoheadrightarrow}$.  The value of this functor on a stratum $F$ determines a (local system of) lattice $M_F$ and  
   rational polyhedral fan $\Sigma_F \subset  M_F \otimes \RR$.     
   \item For each stratum $F \subset \cS$, an isomorphism of the normal bundle $\phi: \rT_F \cM \cong M_F \otimes \RR$ carrying the induced stratification
      on the normal cone $C_F \Phi$ to the standard stratification induced by the fan.  
   \end{itemize} 
   These data are required to satisfy the following compatibility:  
   \begin{itemize}
     \item Given a stratum $F'$ of the induced stratification on $Nbd(F)$ (equivalently, given a stratum $F'$ of the normal cone $C_F \Phi$), the above trivialization
   gives an associated cone $\sigma_{F'} \subset \Sigma_F$.  The tangent bundle to $F'$ naturally extends over $F$, and 
   we get an induced map on normal bundles $\rT_F \cM \to \rT_{F'} \cM|_F$, which is the quotient by the span of $\sigma_{F'}$.   
   We ask that this map is intertwined by $\phi$ with the corresponding map on lattices $M_F \to M_F'$.   (In case 
   $\Sigma_F$ spans $M_F$, this is automatic.) 
   \end{itemize}
\end{definition}

Note that we may regard a manifold as a fanifold: the normal to the unique stratum is simply zero, and we equip it with the trivial fan.  The product of a manifold
and fanifold is naturally a fanifold; in fact, since the product of fans is a fan, the product of fanifolds is a fanifold in a natural way.  

However, we impose a restriction
on the fanifolds we will study here:
\begin{assumption}
  Throughout this paper, we require that all strata in the fanifolds we consider are contractible, so that the local systems of lattices appearing in Definition~\ref{defn:fanifold} are trivial.
  In this case, it follows from the definition that $\Exit_F(\Phi)$
  is equivalent to the poset $\Exit(\Sigma_F)$.   
\end{assumption}

\begin{example} \label{ex: trivial} 
Consider a single point $p$ in some ambient $n$-manifold $\cM$. The point $p$ carries a natural fanifold structure,
where we equip the normal bundle to the point with the trivial fan of cones, whose only cone is $\{0\}$. (This fan is familiar in toric geometry as the fan of cones describing the toric variety $\GG_m^n.$)
\end{example}

\begin{example} \label{ex: necklace} 
Consider a circle stratified by $r$ points and $r$ intervals, for any $r \ge 1$.  This space
acquires a fanifold structure from the evident identification of the normal geometry at each point 
with the fan in $\R$ consisting of the origin and both rays.  For $r = 1$ we denote this fanifold as $\boing$. 
\end{example} 

\begin{example} \label{ex: fan} 
Let $\Sigma$ be a fan of cones in $\RR^n$, and view $\Sigma$ as stratified by its cones.  Then equipping the stratum $\sigma$ with normal fan $\Sigma/\sigma$ determines a fanifold structure on $\Sigma$. 
\end{example}

\begin{example}
Consider $\RR^n$ equipped with an integral polyhedral decomposition in the sense of \cite[Definition 2.3]{GS-intro}. Each $k$-face is contained in a $k$-dimensional affine subspace of $\RR^n$, so that its normal directions are the quotient of $\RR^n$ by its affine span, 
and the faces incident on it determine a fan of cones in this quotient, giving $\RR^n$ a fanifold structure. 
\end{example}

\begin{example}
If $\Phi \subset \cM$ is a fanifold, and $\cN \subset \cM$ is a submanifold transverse to all strata in $\Phi$, then 
$\Phi \cap \cN \subset \cN$ inherits a fanifold structure in an evident way.  
\end{example}

\begin{example}\label{ex:bdry-normalfan}
  If $\Sigma$ is a fan of cones in $\RR^{n+1}$, and we write $S^n:=\{\sum x_i^2=1\}\subset \RR^{n+1}$ for the standard embedding of the $n$-sphere, then $\Sigma \cap S^n \subset S^n$ carries a natural fanifold structure. 
\end{example}

Recall we assume that our stratified spaces are conical at infinity.  Given a fanifold $\Phi$, we write $\partial_\infty \Phi$ for the ideal boundary --- 
i.e. the stratified space for which, in the complement of a compact set, $\Phi \approx \RR_{>0} \times \partial_\infty \Phi $.   The ideal boundary 
$\partial_\infty \Phi$ may be identified with a hypersurface in $\Phi$, and as such carries a natural fanifold structure.  (One might also formulate
a notion of fanifold with boundary, but we will avoid doing so here.)  

Fanifolds may be glued along common subsets of their ideal boundary.  That is, given $n$-dimensional fanifolds $\Phi, \Phi'$ and an $(n-1)$-dimensional fanifold $U$,
along with open embeddings (with boundary transverse to all strata) $U \hookrightarrow \partial_\infty \Phi$ and $U \hookrightarrow \partial_\infty \Phi'$,  
there is a natural gluing $\Phi \#_{U} \Phi'$.  

\begin{remark} \label{rem: handle attachment fanifold} 
One can always think of a fanifold $\Phi$ as being constructed by iterative ``handle attachments,'' in the following way.  At a 0-stratum $P$, 
by definition, the local geometry is that of some fan $\Sigma_P$.  So begin with a disjoint union of fans $\Phi_0 := \coprod \Sigma_P$ corresponding to the 0-strata, 
equipped with the canonical fanifold structure.  For each 1-stratum $I$, there will be some transverse
fan $\Sigma_I$.  The ideal boundary $\partial_\infty I$ will have some subset $\partial_{in} I$ which is in the direction of the interior of $\Phi$, and
we may use it to perform a gluing $\Phi_0 \#_{\Sigma_I \times \partial_{in} I} (\Sigma_I \times I)$.  
(Note that $\Phi$ may not have 0-strata, in which case $\partial_{in} I$ will be empty, as will $\Phi_0,$ and this gluing will be trivially equal to $\Sigma_I\times I.$)
Doing this for all 1-strata yields a 1-dimensional fanifold $\Phi_1$, to which we 
then attach handles $S \times \Sigma_S$ for each  2-stratum $S$, and so on.   

Note that not all handle attachments change the geometry.  For example, when $\Phi = \Sigma$ is already a fan, after beginning with $\Phi_0 = \Sigma$, 
we still attach 1-handles, 2-handles, etc., for all the remaining cones of the fan, but each of these handle attachments acts trivially on the fanifold
(just as a connect sum with a ball acts trivially on a manifold).  

In Section \ref{sec:lvl-defn}, we will lift this procedure to a construction of 
symplectic manifolds, essentially by replacing $S$ by $T^*S$ and $\Sigma$
by the FLTZ skeleton associated to $\Sigma$. 
\end{remark}

\section{B-model}\label{sec:lcs}

A lattice $M$ and a fan $\Sigma \subset M \otimes \RR$ classically determine a toric 
variety $\bT(\Sigma),$ constructed
by gluing the affine varieties $\mathrm{Spec}\, k[\sigma]$ via the evident inclusions.  A cone $\sigma \in \Sigma$ also
determines a toric orbit $O(\sigma)$ in $\bT(\Sigma)$ whose closure 
$\overline{O(\sigma)}$ is itself a toric variety, canonically isomorphic to  $\bT(\Sigma/\sigma)$.   

That is, there is a functor 
\begin{equation}\label{eq:toric-functor}
\bT: (\mathrm{Fan}^{\twoheadrightarrow})^{op} \to \mathrm{Schemes}
\end{equation}
carrying all morphisms to closed embeddings.  
In this section, we study the extension of this functor from fans to fanifolds. 

\subsection{Schemes from fanifolds} 
A fanifold $\Phi$ includes the data of a functor $\Exit(\Phi) \to \mathrm{Fan}^{\twoheadrightarrow}$.  We denote the composition of this functor with \eqref{eq:toric-functor}
also by $$\bT: \Exit(\Phi)^{op} \to \mathrm{Schemes}.$$ 
A functor with domain $\Exit(\Phi)^{op}$ would
define a cosheaf if it were valued in a cocomplete
category, in which case its global sections would be computed by the colimit
$$\bT(\Phi) :=  \varinjlim_{\Exit(\Phi)^{op}}\bT({F}).$$

Unfortunately, the category $\mathrm{Schemes}$ is not cocomplete; in fact, it is not even guaranteed to contain colimits of pushout diagrams.
Although a pushout diagram of affine schemes
$A \leftarrow B \rightarrow C$ certainly admits a pushout in the category of affine schemes --- namely the spectrum of the limit
of the corresponding rings --- this need not agree with the pushout in the category of schemes, even when both exist.
(Consider for instance the diagram $\Spec\, k[t] \leftarrow \Spec\,k[t,t^{-1}] \rightarrow \Spec\, k[t^{-1}]$.)
However, if at least one of the morphisms is 
a closed immersion, then the pushout of affine schemes is the same as the pushout of schemes \cite[Theorem 3.4]{Schwede}. 

Moreover, for arbitrary diagrams of schemes $A \leftarrow B \rightarrow C$ in which both morphisms are closed immersions, 
the pushout exists \cite[Corollary 3.9]{Schwede}.  
(The requirement that the second morphism is also an immersion is used to construct an affine cover to which the previous
theorem may be applied locally.  In situations where such a cover can be given by hand, it suffices for one of the morphisms
to be a closed immersion.) 

\begin{example}
Consider a fanifold $\Phi$ with two interior 0-strata, and a single interior 1-stratum joining them.  
Such a fanifold entails the data of a fan $\Sigma$ (for the 1-stratum) and two 
fans $\Sigma', \Sigma''$ for the 0-strata, along with rays $\sigma' \in \Sigma'$ and 
$\sigma'' \in \Sigma''$ and identifications
$$\Sigma'/ \sigma' \cong \Sigma \cong \Sigma''/\sigma''$$
This determines  isomorphisms of toric varieties $\overline{O(\sigma_1)}\cong \bT_\Sigma \cong \overline{O(\sigma_2)}$. 
Thus the maps $\bT({\Sigma_1}) \leftarrow \bT(\Sigma) \rightarrow \bT({\Sigma_2})$ are both closed embeddings, 
so the pushout $\bT(\Phi)$ of the diagram exists as a scheme. 
\end{example}

 \begin{example}
  Let $\Sigma$ be a fan of cones in $\RR^{n+1}$, and consider the
  fanifold $\Phi = \Sigma \cap S^n \subset S^n$ defined in Example \ref{ex:bdry-normalfan}.  
  It is natural to guess that the global sections $\varinjlim_{\Exit(\Phi)^{op}}\bT({F})$ of the cosheaf $\bT$ on this fanifold produces the toric boundary $\partial\bT(\Sigma)$ of the toric variety $\bT(\Sigma).$
  We may check whether or not $\partial\bT(\Sigma)$ agrees with this colimit affine-locally,
  in the restrictions for the standard affine toric charts on $\bT(\Sigma)$.  In each of these,
  the colimit is evidently an iterated pushout of affine varieties along closed embeddings,
  hence by \cite[Theorem 3.4]{Schwede} given by an affine scheme which is the spectrum of the appropriate limit of rings. 
  That the toric boundary has this property is checked in  \cite[Lem. 3.4.1] {GS17}. 
\end{example}

\begin{proposition} \label{prop: existence of lcs} 
If $\Exit(\Phi)$ is equivalent to a poset, then the colimit 
\[\bT(\Phi) := \varinjlim_{\Exit(\Phi)^{op}}\bT({F})\]
exists in the category of schemes.
\end{proposition}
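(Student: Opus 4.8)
The plan is to build the colimit in schemes by iterating pushouts along closed immersions, using the handle-attachment description of a fanifold from Remark~\ref{rem: handle attachment fanifold}. First I would reduce to the case where the poset $\Exit(\Phi)$ is finite, which it is by our standing hypotheses (finitely many strata), and induct on the number of strata, or equivalently on the "handle attachments" of Remark~\ref{rem: handle attachment fanifold}. The base case is a single fan $\Sigma$, where $\bT(\Sigma)$ is the ordinary toric variety and all transition maps $\bT(\Sigma/\sigma) \to \bT(\Sigma/\tau)$ for $\tau \subset \sigma$ are closed embeddings of orbit closures by the classical toric dictionary recalled at the start of Section~\ref{sec:lcs}; the colimit over $\Exit(\Sigma)^{op} \cong (\mathrm{Fan}^{\twoheadrightarrow}_{\Sigma/})^{op}$ is just $\bT(\Sigma)$ itself, since $\bT(\Sigma)$ receives compatible maps from all orbit closures and is the union of the maximal ones.

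Second, for the inductive step, suppose $\Phi$ is obtained from a smaller fanifold $\Phi'$ by attaching a handle $S \times \Sigma_S$ glued along $\Sigma_S \times \partial_{in} S \hookrightarrow \partial_\infty(S\times\Sigma_S)$ and a corresponding open subset of $\partial_\infty \Phi'$. On the level of exit-path posets this realizes $\Exit(\Phi)^{op}$ as a pushout (in posets, hence the colimit over it is computed as an iterated pushout) of $\Exit(\Phi')^{op}$ and $\Exit(S\times\Sigma_S)^{op}$ along the common subposet $\Exit(\Sigma_S\times\partial_{in}S)^{op}$. By induction $\bT(\Phi')$, $\bT(S\times\Sigma_S)$, and the glued-locus colimit all exist as schemes. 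The key point is that the two maps out of the gluing-locus scheme into $\bT(\Phi')$ and into $\bT(S\times\Sigma_S)$ are closed immersions: this is exactly the statement that the functor $\bT$ of \eqref{eq:toric-functor} carries all morphisms in $\mathrm{Fan}^{\twoheadrightarrow}$ to closed embeddings, together with the fact that closed immersions are stable under the colimits already constructed (a colimit of a diagram of closed immersions compatible over a base is again a closed immersion onto its image). Then \cite[Corollary 3.9]{Schwede} — or, since we can produce an affine cover by hand via the toric charts of the component fans, the one-sided version noted in the text after \cite[Corollary 3.9]{Schwede} together with \cite[Theorem 3.4]{Schwede} — supplies the pushout in schemes.

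Third, I would organize the bookkeeping so that at each stage the diagram whose colimit is being taken is literally a pushout $A \leftarrow B \to C$ with $B \to C$ a closed immersion, which is what Schwede's results require. This is where the "poset" hypothesis is used essentially: for a poset, the colimit over $\Exit(\Phi)^{op}$ can be computed by choosing any linear extension and taking pushouts one new element at a time, each new pushout being of the schematic object built so far against the single affine (or toric) piece $\bT(\Sigma_F)$ along the closed subscheme cut out by the already-present faces. Each such step is a pushout of schemes along a closed immersion, hence exists by \cite[Theorem 3.4]{Schwede} when one leg is affine and by \cite[Corollary 3.9]{Schwede} in general.

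The main obstacle is the verification that the maps being glued along are genuinely \emph{closed} immersions and not merely locally closed or formal — i.e., controlling the image of the gluing-locus scheme inside the larger schemes already produced, and checking that "closed immersion" is preserved under the pushouts performed at earlier stages. Concretely one must check that for a cone $\sigma$ in a component fan $\Sigma_F$, the orbit-closure inclusion $\bT(\Sigma_F/\sigma) \hookrightarrow \bT(\Sigma_F)$ remains a closed immersion after the various identifications and pushouts that have already glued $\bT(\Sigma_F)$ to its neighbors; this is a local, affine-chart computation in toric coordinates (reducing to statements like the one cited from \cite[Lem. 3.4.1]{GS17}), but it is the step that makes the induction actually go through rather than producing only a pushout of affine pieces that fails to glue to a scheme.
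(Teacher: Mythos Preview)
Your third paragraph is essentially the paper's argument: compute the colimit by adding one stratum at a time along a linear extension of the poset, each step being a pushout along a closed immersion, so that \cite[Corollary 3.9]{Schwede} applies. The paper organizes this by codimension --- begin with $Z_0$ the disjoint union of points coming from the top-dimensional strata, then inductively form $Z_k$ by attaching $\bT(\Sigma_F)$ for each codimension-$k$ stratum $F$ along its toric boundary $\partial\bT(\Sigma_F)$. The paper notes, as you do, that when attaching all codimension-$k$ strata at once the map $\coprod \partial\bT(F) \to Z_{k-1}$ is only finite, and that the poset hypothesis is exactly what lets one attach the $\bT(F)$ one at a time with $\partial\bT(F)$ embedding at each step. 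Your fourth paragraph correctly identifies this as the crux.

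Your second paragraph, however, is an unnecessary and somewhat misleading detour. The handle-attachment description of Remark~\ref{rem: handle attachment fanifold} builds $\Phi$ from its $0$-skeleton upward, but on the B-side this is the \emph{wrong direction}: the $0$-strata already carry the largest toric varieties (the terminal objects of the colimit diagram over $\Exit(\Phi)^{op}$), and applying $\bT$ to a handle $S\times\Sigma_S$ just returns an orbit closure $\bT(\Sigma_S)$ already sitting inside the vertex pieces. So ``attaching a $k$-handle'' on the B-side does not add a new scheme to be pushed out; it identifies subschemes already present. Making this precise would require showing that your claimed pushout decomposition of exit-path posets holds and that the resulting identification maps are closed immersions after the earlier gluings --- which is more work than you indicate, and is exactly what the paper sidesteps by inducting on codimension (small toric pieces first) rather than on handle index.
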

\begin{proof}
We can compute this colimit as a sequence of pushouts.  We begin with the top-dimensional cells of $\Phi$:
the coproduct of $\bT$ over the top-dimensional strata is a disjoint
union of points, which we denote $Z_0$.  
We would like to proceed inductively by defining
\begin{equation} \label{eq: building lcs} 
  Z_k := \varinjlim\left(Z_{k-1} \leftarrow \coprod \partial \bT(F) \rightarrow \coprod \bT(F)\right),
\end{equation}
where the coproduct (disjoint union) is taken over all codimension-$k$ strata $F$. 

The right-hand map in \eqref{eq: building lcs} is an embedding and the left-hand map is finite, which is not quite sufficient to apply 
\cite[Corollary 3.9]{Schwede}.  We can resolve this by instead attaching the $\bT(F)$ one at a time, in which case the
hypothesis that $\Exit(\Phi)$ is a poset implies that the relevant inclusion of $\partial \bT(F)$ 
will be an embedding. 
\end{proof}

Note that the hypothesis of the proposition is inherited by any locally-closed constructible subset of $\Phi$.  
This ensures that each of the iterative pushouts $Z_k$ exists as a scheme.

\begin{example} Consider $\boing$, the stratification of a circle by a point
and an interval, as in Example \ref{ex: necklace}.  The exit path category $\Exit(\boing) = (\bullet \rightrightarrows \bullet)$ is the Kronecker quiver, which is not a poset.  Nevertheless,
the colimit  in question exists. (One could deduce this from \cite[Corollary 3.9]{Schwede} by first deleting a point on 
the $\PP^1$ to reduce to the affine case.)  The result $\bT(\boing)$ is the irreducible nodal curve of genus 1, which is certainly
an example of interest to us. In order to apply the theorems we prove here to this example and others of a similar nature,
one can perform all calculations on a cover and conclude by invoking \'etale descent.  
\end{example}

Let us state in general a result we already used in a special case during the proof of Proposition \ref{prop: existence of lcs}.   

\begin{proposition}
Let $\Phi$ be a fanifold with ideal boundary $\partial_\infty \Phi$.
Assuming $\Exit(\Phi)$ is a poset,
there is a finite morphism of schemes $\bT(\partial_\infty \Phi) \to \bT(\Phi)$.  
\end{proposition} 
\begin{proof} 
The existence of the schemes (and map) follows from Proposition \ref{prop: existence of lcs}.  The 
morphism is finite because it is a finite colimit of embeddings. 
\end{proof} 

\begin{example} \label{ex: halfplane}
  Consider $\Phi = \RR \times \RR_{\ge 0}$, stratified as $(\RR \times \RR_{> 0}) \sqcup (\RR \times \{0\})$.  We equip this with 
  a fanifold structure by putting the fan of $\AA^1$ normal to the ``boundary'' stratum $\RR\times\{0\}$.  Evidently $\bT(\Phi) = \AA^1$.  
The ideal boundary $\partial_\infty \Phi$, which we can picture as an infinite-radius semicircle, is a closed interval, with the fan of $\AA^1$ placed at each endpoint.
Thus $\bT(\partial \Phi) = \AA^1 \sqcup_0 \AA^1$, and the map $\bT(\partial \Phi) \to \bT(\Phi)$ identifies the two $\AA^1$'s. 
\end{example}

\begin{example} \label{ex: unigon} 
  Let $\Phi$ be the 2-disk stratified by a single 0-stratum, 1-stratum, and 2-stratum.  We 
can equip $\Phi$ with fanifold structure by putting the fan of $\AA^2$ at the point, the fan of $\AA^1$ along the interval, 
and the trivial fan on the 2-stratum.  The exit path category of $\Phi$ is $\bullet \rightrightarrows \bullet \rightarrow \bullet$.  
In the iterative construction described in the proof of Proposition \ref{prop: existence of lcs}, $Z_0$ is a point, 
$Z_1$ is $\AA^1,$ and
$$Z_2 = \varinjlim(\AA^1 \leftarrow (\AA^1 \cup_0 \AA^1) \rightarrow \AA^2).$$ 
This colimit corresponds to the fanifold gluing of the fan of $\AA^2$ and the fanifold of Example \ref{ex: halfplane} along
their ideal boundaries, each of which is a closed interval. 

Because everything in sight is affine, we may invoke  \cite[Theorem 3.4]{Schwede} to deduce that the colimit
exists and agrees with the colimit of affine schemes, which  may in turn be computed to be 
$k[x+y, xy, xy^2] \cong k[a,b,c]/(b^3 + c^2 = abc)$.\footnote{We thank David Madore, Laurent Moret-Bailly,
David Speyer, and especially Dan Petersen for help at
\url{https://mathoverflow.net/questions/389117/can-i-glue-the-x-axis-to-the-y-axis}.}   
\end{example}

Let us describe explicitly an affine cover of $\bT(\Phi)$.  Given a stratum $F$ of $\Phi$, we obtain a fanifold 
structure on the closure $\overline{F}$ by restriction.  Assuming that $\Exit(\Phi)$ was a poset, the same holds for 
$\overline{F}$. In this case,  
$\bT(\overline{F})$ is a colimit of affines along embeddings, hence it is itself affine.  
Collecting from all strata $F$ the maps $\bT(\overline{F}) \to \bT(\Phi),$ we obtain an affine (hyper)cover. 

We may make a similar construction even without the hypothesis that $\Exit(\Phi)$ is a poset.  

\begin{proposition} \label{prop: algebraic space}
For $\Phi$ a  fanifold, $\bT(\Phi) := \varinjlim_{\Exit(\Phi)^{op}}\bT({F})$ exists as an algebraic space.
\end{proposition}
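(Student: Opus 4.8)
The plan is to reduce the general case to Proposition~\ref{prop: existence of lcs} by passing to an \'etale cover where the exit-path category does become a poset, and then invoking \'etale descent for algebraic spaces. The key point is that the failure of $\Exit(\Phi)$ to be a poset is entirely a local, combinatorial phenomenon: it happens precisely when two distinct exit paths connect the same pair of strata, as in Example~\ref{ex: necklace} with $r=1$. Such a defect can always be resolved by an \'etale (indeed finite) cover of $\Phi$ in the following sense: there is a fanifold $\widetilde\Phi$ with $\Exit(\widetilde\Phi)$ a poset, together with a finite group $G$ acting on $\widetilde\Phi$ by fanifold automorphisms, such that $\Phi = \widetilde\Phi/G$ compatibly with the stratifications and the functors to $\mathrm{Fan}^{\twoheadrightarrow}$. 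Concretely, one takes a cover of each $k$-stratum trivializing the finitely many exit paths out of it and glues these together; since each stratum is contractible by our standing Assumption, such covers exist and can be chosen compatibly, and the total space is again smoothly normally conical with the induced fanifold data.

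First I would make precise the construction of $\widetilde\Phi \to \Phi$: a finite \'etale morphism of stratified spaces, equivariant for a finite group $G$ acting freely away from where it is needed, realizing $\Phi$ as the quotient stack (or, since we only want an algebraic space, the quotient) $[\widetilde\Phi/G]$ at the level of exit-path categories. Then I would observe that the functor $\bT \colon \Exit(\Phi)^{op} \to \mathrm{Schemes}$ pulls back to $\bT \colon \Exit(\widetilde\Phi)^{op} \to \mathrm{Schemes}$, compatibly with the $G$-action, so that the colimit $\bT(\widetilde\Phi) = \varinjlim_{\Exit(\widetilde\Phi)^{op}} \bT(\widetilde F)$ exists as a scheme by Proposition~\ref{prop: existence of lcs}, and carries a $G$-action. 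Since colimits of schemes that exist as algebraic spaces are preserved under the inclusion of schemes into algebraic spaces, and since a finite group quotient of a (quasi-separated) scheme exists as an algebraic space, I would then form $\bT(\widetilde\Phi)/G$ and argue that it computes $\varinjlim_{\Exit(\Phi)^{op}} \bT(F)$: the colimit over $\Exit(\Phi)^{op}$ is the colimit over $\Exit(\widetilde\Phi)^{op}$ coinvariantly with respect to $G$, because $\Exit(\Phi)$ is the quotient of $\Exit(\widetilde\Phi)$ by $G$, and the diagram $\bT$ is the descent of its pullback. This exhibits $\bT(\Phi)$ as an algebraic space.

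An alternative, perhaps cleaner, route that avoids quotients altogether: work locally. For each stratum $F$, the restriction of $\Phi$ to a small neighborhood $Nbd(F)$ has exit-path category $\Exit_F(\Phi) \simeq \Exit(\Sigma_F)$, which is a poset, so $\bT$ restricted there has a colimit $\bT(Nbd(F))$ that is a scheme by Proposition~\ref{prop: existence of lcs}; in fact it is affine, as noted in the discussion of affine covers preceding the statement. These local models glue: for $F' \in \Exit_F(\Phi)$, the closed embedding $\bT(Nbd(F')) \hookrightarrow \bT(Nbd(F))$ assembles the local charts into a diagram whose colimit, if it exists, is $\bT(\Phi)$. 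One then checks the gluing data satisfies the cocycle condition for an \'etale (here Zariski, on each local chart, but only \'etale globally because of monodromy in the exit-path category) gluing, so that by the standard gluing lemma for algebraic spaces along \'etale equivalence relations, $\bT(\Phi)$ exists as an algebraic space. I would present this second approach as the main argument, using the cover $\{\bT(Nbd(F))\}$ indexed by strata $F$, with the overlaps $\bT(Nbd(F) \cap Nbd(F'))$ computed again via Proposition~\ref{prop: existence of lcs}, and verifying that the resulting groupoid in schemes is an \'etale equivalence relation whose quotient algebraic space is $\bT(\Phi)$.

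The main obstacle I anticipate is bookkeeping rather than conceptual: verifying that the local colimits $\bT(Nbd(F))$ glue along genuinely \emph{\'etale} (not merely Zariski-open) maps, with the correct cocycle, and that the resulting equivalence relation is the kind to which the gluing theorem for algebraic spaces applies (i.e.\ that the two projections are \'etale and the relation is quasi-compact and separated enough). Concretely one must show that when $F'$ lies in $\Exit_F(\Phi)$ via two distinct exit paths, the two induced identifications of $\bT(Nbd(F'))$ inside $\bT(Nbd(F))$ differ by an automorphism of $\bT(Nbd(F'))$ coming from a monodromy in $\mathrm{Fan}^{\twoheadrightarrow}$, and that these automorphisms are the transition maps of a well-defined \'etale descent datum. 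Given the contractibility Assumption, which kills the local systems of lattices, these monodromy automorphisms are controlled by the combinatorics of $\Exit(\Phi)$ alone, and the cocycle condition follows from functoriality of $\bT$ on $\mathrm{Fan}^{\twoheadrightarrow}$; nonetheless this is where the real content of the argument lies.
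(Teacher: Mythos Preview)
Your main (second) approach contains a confusion between open stars $Nbd(F)$ and closures $\overline{F}$. The affineness discussion preceding the statement concerns $\bT(\overline{F})$, where $\overline{F}$ contains the \emph{lower}-dimensional strata in the boundary of $F$, each carrying a larger fan; the colimit is an iterated pushout of affines along closed inclusions. By contrast your $\bT(Nbd(F))$ is just the toric variety $\bT(\Sigma_F)$, which is not affine unless $\Sigma_F$ consists of a single cone (in $\boing$ one gets $\bT(Nbd(p)) = \PP^1$). More fundamentally, the inclusions $\bT(Nbd(F')) \hookrightarrow \bT(Nbd(F))$ you invoke are closed embeddings of toric orbit closures, not \'etale morphisms, so they cannot furnish an \'etale equivalence relation. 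Indeed, the colimit along these closed embeddings is precisely the colimit $\varinjlim_{\Exit(\Phi)^{op}} \bT(F)$ whose existence you are trying to establish, so the argument is circular.

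The paper's proof does work with closures, but suitably modified: for each stratum $F$ one forms a ``blowup'' $\widetilde{F}$ of $\overline{F}$ by replacing $\partial\overline{F}$ with the set of pairs $(f,c)$ where $c$ is the cone recording the direction from which $F$ limits onto the stratum of $f$. This separates the multiple exit paths, so $\Exit(\widetilde{F})$ is always a poset and $\bT(\widetilde{F})$ is affine. The natural inclusions among the $\bT(\widetilde{F})$ then assemble to an \'etale equivalence relation on a scheme, presenting $\bT(\Phi)$ as its quotient algebraic space. (In the $\boing$ example, $\widetilde{I}$ is a closed interval with the $\AA^1$ fan at each end, $\bT(\widetilde{I}) = \AA^1 \sqcup_0 \AA^1$, and the \'etale surjection to the nodal cubic identifies the two copies of $\AA^1 \setminus 0$.) Your first approach via a global Galois cover $\widetilde{\Phi} \to \Phi$ is closer in spirit, but you have not established that such a cover exists: the claim that compatible trivializing covers of the individual strata glue to a single $\widetilde{\Phi}$ carrying an action of one finite group $G$ with $\Phi = \widetilde{\Phi}/G$ needs an argument.
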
 
\begin{proof}
For a stratum
$F$, let $\widetilde{F}$ be the ``blowup'' of $\overline{F}$ obtained by replacing the topological boundary of $\overline{F}$ with 
the space of pairs $(f, c),$ where $f \in \partial \overline{F}$ and $c$ is a choice of cone along which $F$ arrives 
in the normal bundle to the stratum containing $f$.  The space $\widetilde{F}$ carries a natural fanifold structure, and the colimit
$\bT(\widetilde{F})$, as an iterated pushout of affines along closed inclusions, is affine.  The natural inclusions among the 
$\bT(\widetilde{F})$ form an \'etale equivalence relation, giving a presentation of $\bT(\Phi)$ 
as an algebraic space.  
\end{proof}

\begin{example}
Consider again the circle $\boing$, stratified by a point $p$ and interval $I$.  Then $\widetilde{I}$ is a closed interval, and 
$\bT(\widetilde{I}) = \AA^1 \sqcup_0 \AA^1$.  The map $\bT(\widetilde{I}) \to \bT(\boing)$ identifies the two copies
of $\AA^1 \setminus 0$ by $z \mapsto 1/z$. 
\end{example}

\begin{remark}
It may be that with additional care, one could find local affine charts by hand. This would allow 
\cite[Theorem 3.4]{Schwede} to be used in place of \cite[Corollary 3.9]{Schwede} in the proof of Proposition 
\ref{prop: existence of lcs}, showing that $\bT(\Phi)$ is a scheme in general. 
\end{remark} 

\begin{remark}
Ideas similar to those of this subsection appear in \cite[Section 2]{GS-log1}.
\end{remark}

\subsection{Coherent sheaves} 
We now turn to studying coherent sheaves on these glued-up objects.  

Let us fix some notation.  For a scheme (or algebraic space, stack, etc.) $Z$, we write $\Coh(Z)$ for the 
category of dg modules over the classical category of coherent sheaves on $Z$.  The theory of this
category is extensively developed in \cite{GR1, GR2}, where it is termed IndCoh.  $\Coh(Z)$ is a presentable dg category.  
We write $\Coh_!: \mathrm{Sch} \to {}^* \mathrm{DG}$ for the functor 
taking $Z \mapsto \Coh(Z)$ and morphisms of varieties to pushforward.  When restricted to proper morphisms, 
this functor lands in ${}^* {}^* \mathrm{DG}$, since pushforward along proper morphisms preserves coherent sheaves
in the ordinary sense. 
We similarly write $\Coh^!: \mathrm{Sch}^{op} \to  \mathrm{DG}^*$ for the corresponding functor 
taking morphisms of varieties to pullbacks; again it lands in ${}^* \mathrm{DG}^*$ when restricted to proper morphisms.  
These functors carry equivalent information; each is obtained from
the other by taking adjoints (of the images of morphisms).   

\vspace{2mm} 

We may then produce a composite functor 
\[
\Coh^! \circ \bT: \Exit(\Phi) \to {}^*\mathrm{DG}^*.
\]
Limits in any of ${}^* \mathrm{DG}$,  $\mathrm{DG}^*$, etc., exist and agree
with the limit in the underlying category of DG categories. Thus, the above composition 
defines a sheaf of categories (independently of our above considerations about when we can take sections of $\bT$). 

Similarly, there is a constructible cosheaf $\Coh_! \circ \bT$, related to the sheaf above by taking adjoints.  This cosheaf is valued in $ {}^* {}^* \mathrm{DG}$, so 
if desired it may be restricted to the
full subcategory of compact objects, which is the (dg) bounded derived category of coherent sheaves in the ordinary sense. 

\begin{proposition} \label{prop: colimits of coh} 
The natural map 
$$(\Coh_! \circ \bT)(\Phi) = \varinjlim_{\Exit(\Phi)^{op}} \Coh(\bT (F)) 
\to \Coh\left(\varinjlim_{\Exit(\Phi)^{op}} \bT (F) \right) = \Coh(\bT(\Phi))$$
is an equivalence.  (Here if necessary we understand $\bT(\Phi)$ as an algebraic space.) 
\end{proposition}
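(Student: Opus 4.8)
The plan is to reduce the global statement to the affine local statement already invoked in the examples, using the presentation of $\bT(\Phi)$ as an iterated pushout along closed immersions from Proposition~\ref{prop: existence of lcs} (or its algebraic-space variant, Proposition~\ref{prop: algebraic space}). The key input is the result of \cite[Chap. 8.A, Thm. A.1.2]{GR2}: for a pushout square of (quasi-compact, quasi-separated) schemes or algebraic spaces $Z = Z_1 \sqcup_{Z_0} Z_2$ in which the maps $Z_0 \to Z_i$ are closed immersions, the square of $\Coh$-categories under $!$-pushforward is a pushout in ${}^*\mathrm{DG}$ (equivalently, the square under $*$-pullback is a pullback in $\mathrm{DG}^*$). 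I would first record that all the schemes $\bT(F)$, and all the intermediate $Z_k$ appearing in \eqref{eq: building lcs}, are quasi-compact and quasi-separated — indeed $\bT(F)$ is a finite colimit of affine toric varieties along closed immersions, hence qcqs — so that the cited descent theorem applies at each step.

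Next I would run the induction. Write $\bT(\Phi) = \varinjlim_{\Exit(\Phi)^{op}} \bT(F)$ as the terminal term $Z_n$ of the sequence $Z_0 \to Z_1 \to \cdots \to Z_n$, where $Z_k$ is built from $Z_{k-1}$ by attaching, one stratum at a time, the $\bT(F)$ for $F$ of codimension $k$ along the closed immersions $\partial \bT(F) \hookrightarrow Z_{k-1}$ and $\partial\bT(F) \hookrightarrow \bT(F)$ (this is exactly the bookkeeping in the proof of Proposition~\ref{prop: existence of lcs}; in the non-poset case one uses $\widetilde F$ and works on the \'etale chart as in Proposition~\ref{prop: algebraic space}, then descends). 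At the base, $Z_0$ is a disjoint union of affine toric varieties and $\Coh(Z_0) = \prod \Coh(\bT(F))$ trivially equals the colimit of the corresponding discrete diagram. For the inductive step, each single attachment is a pushout of qcqs algebraic spaces along a closed immersion, so \cite[Thm. A.1.2]{GR2} gives that $\Coh_! $ carries it to a pushout in ${}^*\mathrm{DG}$; since $\Coh_! \circ \bT$ on the exit-path diagram restricted to the relevant subcategory is, by the inductive hypothesis, the colimit computing $\Coh(Z_{k-1})$, gluing on the new cell both on the space side and on the category side produces $\Coh(Z_k) = \varinjlim \Coh(\bT(F))$ over the enlarged subposet. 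Iterating through $k = 1, \dots, n$ yields the claim, after identifying the colimit assembled cell-by-cell with the colimit $\varinjlim_{\Exit(\Phi)^{op}} \Coh(\bT(F))$ over the whole exit-path category — a standard cofinality/associativity-of-colimits argument, since the filtration of $\Exit(\Phi)^{op}$ by codimension exhibits the full colimit as the iterated one.

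The main obstacle is verifying that the hypotheses of \cite[Thm. A.1.2]{GR2} are genuinely met at each inductive stage: one must know that the relevant maps really are closed immersions (not merely finite), which is why the proof of Proposition~\ref{prop: existence of lcs} insisted on attaching one $\bT(F)$ at a time and on $\Exit(\Phi)$ being a poset — or, in the general case, why one passes to the blown-up strata $\widetilde F$ and the \'etale chart of Proposition~\ref{prop: algebraic space} before applying the theorem and descending. A secondary subtlety is bookkeeping: matching the ``one cell at a time'' order of attachments with the diagram $\Exit(\Phi)^{op}$ so that the partial colimits of $\Coh_!\circ\bT$ are exactly the $\Coh(Z_{k-1})$; this is routine but needs the observation that a colimit over a poset can be computed as a transfinite composite along any linear refinement of the opposite order, together with the fact that left adjoints ($!$-pushforwards) preserve the relevant colimits in ${}^*\mathrm{DG}$. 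Everything else — qcqs-ness, the base case, the passage between the ${}^*\mathrm{DG}$ and $\mathrm{DG}^*$ pictures via adjunction — is formal.
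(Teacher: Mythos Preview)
Your proposal is essentially correct and follows the same inductive strategy as the paper's proof: reduce to iterated pushouts along closed immersions and invoke \cite[Chap.~8.A, Thm.~A.1.2]{GR2}. One caveat worth flagging: you state the Gaitsgory--Rozenblyum result for arbitrary qcqs schemes or algebraic spaces, but as the paper cites it, the theorem is for pushouts of \emph{affine} schemes (with one map a closed embedding and the other finite); since the intermediate $Z_k$ need not be affine, the paper first applies Zariski (or, when $\Exit(\Phi)$ is not a poset, \'etale) descent to reduce to the affine cover by the $\bT(\overline{F})$, and only then invokes the theorem. Your argument implicitly needs this same reduction --- it is not hard, but it is the step that closes the gap between ``qcqs'' and ``affine.''
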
 
\begin{proof}
We showed above that the colimit of spaces on the right-hand side can be calculated by iterated pushouts along diagrams where one map is an embedding
and the other is finite.  As the strata are contractible, the left-hand colimit entails the same sequence of pushouts (now taken in ${}^* {}^* \mathrm{DG}$).
By Zariski (or \'etale if $\Exit(\Phi)$ is not a poset) descent, we may reduce the question of checking equivalence to a calculation local
on the cover by the $\bT(\overline{F})$ described above.  

Finally, we apply the result \cite[Chap. 8.A, Thm. A.1.2]{GR2} that pushouts of affine schemes along
diagrams where one inclusion is an embedding and the other is finite are carried to pushouts of coherent sheaf categories.
\end{proof}

We have seen that constructible open subsets of $\Phi$ correspond to closed subsets of $\bT(\Phi)$.  
Taking complements, we can associate an open subset of $\bT(\Phi)$ to a constructible closed subset of 
$\Phi$.  

\begin{proposition}
Writing $\mathrm{Closed}(\Phi)$ for the poset of constructible closed subsets of $\Phi$, where
morphisms are inclusions, we have a functor

\begin{eqnarray*}
\bU: \mathrm{Closed}(\Phi) & \to & \mathrm{Spaces}, \\
\cZ & \mapsto & \bT(\Phi) \setminus \bT(\Phi \setminus \cZ).
\end{eqnarray*} 
If $\Exit(\Phi)$ is a poset, the functor lands in schemes.
\end{proposition}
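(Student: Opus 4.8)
The plan is to upgrade the duality ``constructible open subsets of $\Phi \leftrightarrow$ closed subsets of $\bT(\Phi)$'' to a functorial statement on its complementary version, using Proposition \ref{prop: algebraic space} (or Proposition \ref{prop: existence of lcs} in the poset case) as the basic input that the relevant colimits $\bT(-)$ exist. First I would check that $\bU$ is well-defined on objects: given a constructible closed $\cZ \subset \Phi$, its complement $\Phi \setminus \cZ$ is a constructible open, hence itself a fanifold (the hypothesis that $\Exit(\Phi)$ is a poset, when assumed, is inherited by locally-closed constructible subsets, as noted after Proposition \ref{prop: existence of lcs}), and the inclusion $\Phi \setminus \cZ \hookrightarrow \Phi$ induces a finite morphism $\bT(\Phi \setminus \cZ) \to \bT(\Phi)$ which is moreover a closed immersion onto a union of strata-closures; removing its image from $\bT(\Phi)$ gives an open subspace (open subscheme, when $\Exit(\Phi)$ is a poset, since then $\bT(\Phi)$ is a scheme and $\bT(\Phi \setminus \cZ) \to \bT(\Phi)$ is a closed immersion). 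So $\bU(\cZ)$ is a well-defined object of $\mathrm{Spaces}$, and of $\mathrm{Schemes}$ under the poset hypothesis.

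Next I would verify functoriality on morphisms. An inclusion $\cZ \subset \cZ'$ of constructible closed subsets yields $\Phi \setminus \cZ' \subset \Phi \setminus \cZ$, an inclusion of constructible opens, and I must produce from this an open immersion $\bU(\cZ) \hookrightarrow \bU(\cZ')$, i.e. an inclusion of open complements $\bT(\Phi) \setminus \bT(\Phi \setminus \cZ) \hookrightarrow \bT(\Phi) \setminus \bT(\Phi \setminus \cZ')$. This reduces to checking that the closed subspaces satisfy the reverse containment $\bT(\Phi \setminus \cZ') \subseteq \bT(\Phi \setminus \cZ)$ as closed subspaces of $\bT(\Phi)$, compatibly with the structure maps. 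This follows because $\bT$ applied to the inclusion of constructible open fanifolds $\Phi \setminus \cZ' \hookrightarrow \Phi \setminus \cZ$ is again a closed immersion (each is a colimit of the $\bT(F)$ over the respective exit-path poset, and the smaller open sits inside the larger as a union of strata-closures), and composing with $\bT(\Phi \setminus \cZ) \hookrightarrow \bT(\Phi)$ gives a factorization of $\bT(\Phi \setminus \cZ') \hookrightarrow \bT(\Phi)$ through $\bT(\Phi \setminus \cZ)$. Taking complements reverses the containment and the required open immersion $\bU(\cZ) \hookrightarrow \bU(\cZ')$ drops out. Composition of two such inclusions is handled by the corresponding composition of factorizations, and identities go to identities, so this assembles into a genuine functor out of the poset $\mathrm{Closed}(\Phi)$.

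The main obstacle, and the step I would treat most carefully, is the claim that $\bT$ carries an inclusion of constructible-open fanifolds $V \hookrightarrow W$ (inside a fixed ambient $\Phi$) to a closed immersion $\bT(V) \hookrightarrow \bT(W)$ of the corresponding spaces, compatibly with the maps down to $\bT(\Phi)$. The subtlety is that $\bT(V)$ and $\bT(W)$ are each assembled as iterated pushouts of affines along finite-and-embedding diagrams (as in the proof of Proposition \ref{prop: existence of lcs}), so one must check that the evident maps on the pieces $\bT(\overline{F})$ are compatible with the colimit presentations and glue to a closed immersion; in the non-poset case this is the statement that the \'etale equivalence relations of Proposition \ref{prop: algebraic space} are compatible with the inclusion, which follows from the functoriality of the ``blowup'' construction $\widetilde{F}$ in $F$. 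Concretely, one can verify the closed-immersion property affine-locally on the cover of $\bT(\Phi)$ by the $\bT(\overline{F})$ (resp. $\bT(\widetilde{F})$): over each such chart the inclusion $V \hookrightarrow W$ restricts to an inclusion of constructible opens in $\overline{F}$ (resp. $\widetilde{F}$), and there $\bT$ of the inclusion is a map of affine schemes classifying a surjection of rings, hence a closed immersion. Gluing these local closed immersions over the cover yields the global one, which completes the proof.
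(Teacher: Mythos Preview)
The paper does not give a proof of this proposition; it is stated immediately after the two-sentence observation that constructible open subsets of $\Phi$ correspond to closed subsets of $\bT(\Phi)$, and that taking complements therefore associates an open subset of $\bT(\Phi)$ to a constructible closed subset of $\Phi$. Your argument is a correct and careful unpacking of exactly this observation: you verify well-definedness on objects, then reduce functoriality on morphisms to the claim that $\bT$ sends inclusions of constructible open sub-fanifolds to closed immersions, and finally sketch how to check this claim affine-locally on the cover by the $\bT(\overline{F})$ (or $\bT(\widetilde{F})$ in the non-poset case). This is the same approach the paper has in mind, just made explicit.

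One small comment: the closed-immersion claim for $\bT(V) \hookrightarrow \bT(\Phi)$ (with $V$ open constructible) is really the only point with content, and your affine-local check is the right way to see it. You might streamline the justification by observing directly that, under the running assumption of contractible strata, $\bT(\Phi)$ is stratified by toric orbits indexed by strata of $\Phi$, and that openness of $V$ translates into $\Exit(V)$ being a cosieve in $\Exit(\Phi)$, so that $\bT(V)$ is exactly the union of the orbit-closures indexed by the (minimal) strata of $V$; this makes the closed-immersion property transparent without needing to invoke the iterative pushout presentation.
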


\begin{example}
For $\Phi = \boing$, the closed constructible subsets are $\bullet$ and $\boing$.   We have 
$\bU(\bullet) = \PP^1 \setminus \{0, \infty\}$, and $\bU(\boing) =  \PP^1 / \{0 = \infty\}$. 
\end{example}

In general, for a closed inclusion of algebraic spaces $S \subset T$, one has an exact sequence
$$\Coh(S) \to \Coh(T) \to \Coh(T \setminus S) \to 0.$$

Taking $S\subset T$ to be the inclusion $\bT(\Phi\setminus\cZ)\subset \bT(\Phi)$ as above, we obtain an exact sequence

\begin{equation} \label{eq: mirror to viterbo} 
(\Coh \circ \bT) (\Phi \setminus \cZ) \to (\Coh \circ \bT)(\Phi) \to (\Coh \circ \bU) (\cZ) \to 0.
\end{equation}

We conclude that the functor $\Coh \circ \bU$ is completely determined from $\Coh \circ \bT$.  It does not
follow formally from the fact that $\Coh^! \circ \bT$ is a sheaf that $\Coh \circ \bU$ satisfies any descent properties, 
but this is nevertheless true:
\begin{proposition}\label{prop:zariski}
Given closed constructible subsets $\cC \cup \cD$ the natural map 
$$ \Coh(\bU (\cC \cup \cD))  \to   \Coh( \bU (\cC)) \underset{\Coh( \bU (\cC \cap \cD))}\times \Coh( \bU (\cD) ) $$
is an isomorphism.  
\end{proposition}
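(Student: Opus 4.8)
The plan is to realize $\bU(\cC)$ and $\bU(\cD)$ as a Zariski open cover of $\bU(\cC\cup\cD)$ whose pairwise intersection is $\bU(\cC\cap\cD)$, and then to invoke descent for $\Coh = \mathrm{IndCoh}$. The key point that makes this easy is that each $\bU(\cZ) = \bT(\Phi)\setminus\bT(\Phi\setminus\cZ)$ is by construction an \emph{open} subscheme (resp.\ subspace) of $\bT(\Phi)$, hence is determined by its underlying open set; so the entire question reduces to set-level identities for the closed subsets $\bT(U)\subseteq\bT(\Phi)$ attached to open constructible $U\subseteq\Phi$, and no scheme-theoretic intersection or union of the $\bT(U)$ ever enters.

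First I would record the toric-orbit stratification of $\bT(\Phi)$. Computing $\bT(\Phi) = \varinjlim_{\Exit(\Phi)^{op}}\bT(F)$ as an iterated pushout along closed embeddings, the toric orbit stratifications of the individual $\bT(\Sigma_F)$ glue to a stratification of $\bT(\Phi)$ by torus orbits $O_F$, indexed by the strata $F$ of $\Phi$, with $\overline{O_F} = \bigcup_{F\le G}O_G$ in the closure order on strata. Because the underlying point set of a pushout of schemes along closed embeddings is the corresponding set-theoretic pushout, for any open constructible $U$ --- which, being open, is upward closed in the closure order --- the closed subscheme $\bT(U) = \varinjlim_{\Exit(U)^{op}}\bT(F)$ has underlying set $\bigcup_{F\in U}|O_F|$ inside $|\bT(\Phi)|$.

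From this, two identities are immediate for open constructible $U,V\subseteq\Phi$: one has $|\bT(U\cup V)| = |\bT(U)|\cup|\bT(V)|$, and --- using that the orbits $O_F$ are pairwise disjoint --- $|\bT(U\cap V)| = |\bT(U)|\cap|\bT(V)|$. Taking $U = \Phi\setminus\cC$ and $V = \Phi\setminus\cD$ and passing to complements inside $\bT(\Phi)$, these become equalities of open subschemes
$$\bU(\cC)\cup\bU(\cD) = \bU(\cC\cup\cD),\qquad \bU(\cC)\cap\bU(\cD) = \bU(\cC\cap\cD).$$
Thus $\{\bU(\cC),\bU(\cD)\}$ is a Zariski open cover of $\bU(\cC\cup\cD)$ whose pairwise intersection is $\bU(\cC\cap\cD)$, and the proposition follows from Zariski descent for $\Coh$, i.e.\ $\Coh(W)\simeq\Coh(W_1)\times_{\Coh(W_1\cap W_2)}\Coh(W_2)$ whenever $W = W_1\cup W_2$ is an open cover, which holds because $\Coh^!$ satisfies \'etale (hence Zariski) descent (this is the same descent already invoked in the proof of Proposition~\ref{prop: colimits of coh}, cf.\ \cite{GR2}). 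In the algebraic-space case one runs the identical argument after replacing the open cover by the \'etale charts $\bT(\widetilde{F})$ of Proposition~\ref{prop: algebraic space}.

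The main obstacle --- indeed essentially the only step requiring care --- is the bookkeeping in the second paragraph: setting up the toric-orbit stratification of $\bT(\Phi)$ and verifying that the colimit defining $\bT(U)$, computed as an iterated pushout along closed embeddings, really has underlying subset $\bigcup_{F\in U}|O_F|$ of $|\bT(\Phi)|$. This is a purely combinatorial and topological verification --- tracking how the affine toric charts and their orbits are identified under the gluing maps, together with the upward-closedness of open $U$ --- after which everything is formal.
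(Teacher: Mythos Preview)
Your proof is correct and follows exactly the same approach as the paper's: the paper's proof is the single sentence ``This is descent for a Zariski cover.'' You have simply spelled out the verification --- via the toric-orbit decomposition of $\bT(\Phi)$ --- that $\{\bU(\cC),\bU(\cD)\}$ really is a Zariski cover of $\bU(\cC\cup\cD)$ with intersection $\bU(\cC\cap\cD)$, which the paper leaves implicit.
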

\begin{proof}
This is descent for a Zariski cover. 
\end{proof} 

We wrote $\Coh^*$ rather than $\Coh^!$ in the above proposition to remind ourselves that we are taking pullbacks 
only along open inclusions.   

\begin{remark}
  Proposition \ref{prop:zariski} applies without the hypothesis that $\Exit(\Phi)$ is a poset
but may not be very useful in such cases. For instance, $\boing$ has no nontrivial covers by closed constructible sets.
One can repair this by defining a version of $\bU$ for non-injective maps --- for instance, from the $\widetilde{F}$ used in 
the proof of Proposition \ref{prop: algebraic space}. 
\end{remark}

\section{A-model} \label{sec:lvl}

In this section, we construct the expected SYZ dual space to $\bT(\Phi).$

\begin{theorem} \label{thm: main construction} 
Given a fanifold $\Phi$ (with contractible strata), there is a subanalytic Weinstein manifold $\bW(\Phi)$, a conic subanalytic Lagrangian $\LL(\Phi) \subset \bW(\Phi)$
containing the skeleton of $\bW(\Phi)$, and a map  $\pi: \LL(\Phi) \to \Phi$, with the following properties.
\begin{enumerate}
\item Let $F \subset \Phi$ be a stratum of codimension $d$. Then:
\begin{itemize} 
  \item  $\pi^{-1}(F) \cong F \times T^d$, where $T^d$ is a $d$-torus. 
  \item  $\pi^{-1}(Nbd(F)) \cong F \times \LL_F$, 
where $Nbd(F)$ is an appropriate neighborhood, and $\LL_F$ is the FLTZ Lagrangian associated to the normal fan of $F$.
\item   In a neighborhood of this $F \times T^d$, there is a symplectomorphism of pairs 
$$(\rT^* F \times \rT^* T^d,  F \times \LL_F) \hookrightarrow (\bW(\Phi), \LL(\Phi)).$$  
\end{itemize}
\item 
If $\Phi$ is closed, then $\LL(\Phi)$ is equal to the skeleton of $\bW(\Phi)$.  
\item A sub-fanifold $\Phi' \subset \Phi$ determines a Weinstein subdomain $\bW(\Phi') \subset \bW(\Phi)$ such that 
$\LL(\Phi') \subset \bW(\Phi') \cap \LL(\Phi)$.  
\end{enumerate} 

Finally, $\bW(\Phi)$ carries a Lagrangian polarization given in the local charts by taking the base direction
in $\rT^*F$ and the cotangent fiber direction in $\rT^* T^d$. 
\end{theorem}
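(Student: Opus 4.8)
The plan is to construct $\bW(\Phi)$, $\LL(\Phi)$ and $\pi$ by lifting the iterative handle-attachment description of fanifolds from Remark~\ref{rem: handle attachment fanifold} to the setting of Weinstein sectors, with the FLTZ Lagrangian supplying the normal model at each stratum. The building block is the following: to a lattice $M$ and a fan $\Sigma \subset M \otimes \RR$ of rank $n$ one associates the conic symplectic manifold $\rT^* T^n$ (the cotangent bundle of a torus, or rather its subanalytic conic completion), the FLTZ Lagrangian $\LL(\Sigma) = \bigcup_{\sigma \in \Sigma} \sigma^\perp \times \sigma$ inside the conormal bundle of that torus, and the cotangent-fiber projection $p_\Sigma \colon \LL(\Sigma) \to \Sigma$. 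Two properties of this block are the load-bearing input, and I would record them first. First, $\LL(\Sigma)$ is conic and subanalytic and contains the relative skeleton of the associated Weinstein sector, with equality when $\Sigma$ is complete \cite{GS17, Zhou-skel}. Second, for each cone $\tau \in \Sigma$ the link of $\LL(\Sigma)$ in the ideal contact boundary along the face $\tau$ is canonically identified, via the quotient $M \to M/\mathrm{Span}(\tau)$, with $\LL(\Sigma/\tau) \subset \rT^* T^{\,n - \dim\tau}$, compatibly with the projections $p$. This second property is the FLTZ functoriality under passage to quotient fans, and it is exactly what makes the gluings below consistent.

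Following Remark~\ref{rem: handle attachment fanifold}, I would then realize $\Phi$ as an iterated handle attachment $\Phi_0 \subset \Phi_1 \subset \cdots \subset \Phi_N = \Phi$, where $\Phi_0 = \coprod_P \Sigma_P$ runs over the $0$-strata and $\Phi_k$ is obtained from $\Phi_{k-1}$ by gluing in a handle $F \times \Sigma_F$ along $\Sigma_F \times \partial_{in} F$ for each $k$-dimensional stratum $F$ (here $\Sigma_F \subset M_F \otimes \RR$ is the normal fan and $d = \codim F = \mathrm{rk}\, M_F$). Lifting this stage by stage: set $\bW_0 = \coprod_P \rT^* T^{d_P}$ with $\LL_0 = \coprod_P \LL(\Sigma_P)$; given $(\bW_{k-1}, \LL_{k-1})$, form for each $k$-stratum $F$ the Weinstein sector $\rT^* F \times \rT^* T^{d}$ with Lagrangian $F \times \LL(\Sigma_F)$, and glue it onto $\bW_{k-1}$ along the Liouville hypersurface of its ideal contact boundary cut out by $\LL(\Sigma_F) \times \partial_{in} F$ (in the sectorial formalism of \cite{GPS3}). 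The gluing datum is read off from the fanifold structure: the equivalence $\Exit_F(\Phi) \cong \Exit(\Sigma_F)$ assigns to each stratum $F'$ meeting $Nbd(F)$ a cone $\tau = \sigma_{F'} \in \Sigma_F$ together with an identification $\Sigma_F/\tau \cong \Sigma_{F'}$, and the second property of the building block converts this into an identification of the corresponding boundary piece of $\LL(\Sigma_F)$ with the already-built $\LL(\Sigma_{F'})$-piece in $\bW_{k-1}$. Because all strata are contractible, the local systems of lattices are trivial and each gluing takes place over a genuine product with no monodromy; subanalyticity, conicity of the Lagrangian, and the Weinstein property are preserved at every stage. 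I would set $\bW(\Phi) = \bW_N$ and $\LL(\Phi) = \LL_N$, and glue $\pi$ out of the projections $p_{\Sigma_F}$ over the handles, which agree on overlaps again by the second property.

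It then remains to check the listed properties. For (1): over a $d$-stratum $F$, the stratum of $\LL(\Sigma_F)$ lying over the apex cone $\{0\} \in \Sigma_F$ is the zero section $T^d \subset \rT^* T^d$, so $\pi^{-1}(F) \cong F \times T^d$; the handle attached at $F$ is precisely the chart $\rT^* F \times \rT^* T^d$, and it carries $\pi^{-1}(Nbd(F)) \cong F \times \LL(\Sigma_F) = F \times \LL_F$ together with the asserted symplectic embedding of pairs. The one substantive point is that the handles attached at later (higher-codimension) stages do not alter this chart except through the FLTZ modifications already dictated by $\Sigma_F$ itself --- that is, the FLTZ Lagrangians of the various normal fans $\Sigma_{F'}$ surrounding $F$ fit together along their cone faces exactly as prescribed by the exit functor of $\Phi$. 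This is where the compatibility axiom of Definition~\ref{defn:fanifold} (the quotient $M_F \to M_{F'}$ intertwined with the normal-bundle map $\rT_F\cM \to \rT_{F'}\cM|_F$) is used, and together with a parametric Weinstein neighborhood theorem it is \textbf{the main obstacle}: the whole construction amounts to organizing FLTZ functoriality coherently over the poset $\Exit(\Phi)$, and arranging the Weinstein neighborhoods to patch is the real content. Granting (1), property (2) follows since a closed $\Phi$ has no ideal boundary, so every contact-boundary contribution of every handle is glued to a neighboring handle and nothing escapes to infinity; hence $\LL(\Phi)$ is compact and, being the union of the relative skeleta of the pieces, equals the skeleton of $\bW(\Phi)$. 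Property (3) is immediate from the construction: a sub-fanifold $\Phi' \subset \Phi$ is assembled from a sub-collection of the (pieces of) handles, so its lift $\bW(\Phi') = \bW_{N'}$ is a Weinstein subdomain of $\bW(\Phi)$ with $\LL(\Phi') \subset \bW(\Phi') \cap \LL(\Phi)$.

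Finally, for the Lagrangian polarization: on each chart $\rT^* F \times \rT^* T^d$ take the distribution which is the horizontal (base) distribution $\rT F$ on the $\rT^* F$ factor and the vertical (fiber) distribution on the $\rT^* T^d$ factor; this is Lagrangian. These local polarizations patch into a global one because the transition maps between overlapping charts are, in the $F$-directions, cotangent lifts of base diffeomorphisms --- which preserve the horizontal distribution --- and, in the torus directions, the FLTZ boundary identifications above, which are cotangent lifts of affine-linear maps of tori and hence preserve the vertical distribution; at a jump in codimension the new cotangent factor is appended on the base side, again compatibly. Hence $\bW(\Phi)$ carries the stated polarization, which completes the proof.
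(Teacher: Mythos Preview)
Your proposal is essentially the same as the paper's proof: both construct $\bW(\Phi)$ by induction over the skeleta $\Phi_0 \subset \Phi_1 \subset \cdots$, beginning with a disjoint union of cotangent bundles of tori carrying FLTZ Lagrangians and then attaching handles $\rT^*F_\circ \times \rT^*\widehat{M}_F$ for higher-dimensional strata, extending the Lagrangian through each handle using the identification $\Sigma_F/\sigma \cong \Sigma_{F'}$ from the fanifold axioms.

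The one point where the paper is sharper than your outline is the mechanism for ``arranging the Weinstein neighborhoods to patch,'' which you correctly flag as the main obstacle but attribute vaguely to a parametric Weinstein neighborhood theorem. The paper instead introduces the notion of \emph{biconicity} (Definition~\ref{def: biconic}, Lemma~\ref{lem: biconic}): a conic Lagrangian is biconic along a Legendrian $\cL$ if in standard coordinates $\eta: Nbd(\cL) \hookrightarrow J^1\cL$ it lands in $\rT^*\cL \times 0$. Biconic Lagrangians extend canonically through handle attachments. The paper then proves in Lemma~\ref{lem:fltz-inductive} (via the polar-hypersurface argument of Lemma~\ref{lem: polar}) that the FLTZ Lagrangian $\LL(\Sigma)$ is biconic along each $\partial_\infty\LL_\sigma$, with local form $\LL(\Sigma/\sigma) \times \partial_\infty\sigma$; this is exactly your ``second property of the building block,'' but supplied with the standard coordinates needed to make the extension-through-handles well defined. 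Your sketch would become a complete proof once this biconicity verification is filled in; otherwise the approaches coincide. (One small slip: ``later (higher-codimension) stages'' should read lower-codimension, since you attach higher-dimensional strata later.)
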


\begin{remark} 
In fact the construction (and indeed this entire section) works without the hypothesis of contractible strata, though in that case $\pi^{-1}(F)$
and $\pi^{-1}(Nbd(F))$ will be possibly nontrivial bundles over $F$, with fibers $T^d$ and $\LL_F$, respectively.  
\end{remark}

\begin{remark}
We have chosen our conventions so that $\partial_\infty(\LL(\Phi)) \cong \LL(\partial_\infty \Phi)$.  Indeed, one can see
from our construction that there is a Liouville hypersurface embedding 
$\bW(\partial_\infty \Phi) \subset \partial_\infty \bW(\Phi)$ such that 
$\LL(\Phi)$ is the relative skeleton of the pair $(\bW(\Phi), \bW(\partial_\infty \Phi))$.  
\end{remark}

\begin{remark}
It would be straightforward
to obtain the  germ of $\bW(\Phi)$ along $\LL(\Phi)$ by gluing the putative charts $(\rT^* C \times \rT^* T^d,  F \times \LL_F)$.  
However, to get a exact symplectic form giving a Liouville manifold with core $\Lambda$, one would have to carefully modify 
the local cotangent forms in order to interpolate between them, while preserving the skeleton and maintaining 
the existence of a convex neighborhood thereof. 
 Rather than -- or in order to -- do this, we  construct $\bW(\Phi)$ using the handle attachment process, which already has 
the desired interpolations built in.  
\end{remark}

\begin{remark} \label{rem: lvl SYZ}
  The map $\pi$ is presumably the restriction to the skeleton of a large-volume-limit fibration $\widetilde{\pi}: \bW(\Phi) \to \Phi$.
We expect that with additional care, the construction described in this section would produce this fibration, in fact as an 
integrable system with noncompact fibers.  (This would require checking that the various adjustments made in the handle 
attachment process can be made compatibly with the projection.)    Note that if we fix an appropriate line bundle on 
$\bT(\Phi)$ so that it makes sense to discuss the B-side SYZ base $\Psi$ glued from
moment polytopes, then $\Phi$ is combinatorially dual to $\Psi$. 
\end{remark} 

\begin{remark} \label{rem: takeda section} 
The map $\LL(\Phi) \to \Phi$ has a natural section, glued together from the sections $\LL(\Sigma) \to \Sigma$ given by the inclusion of the cotangent
fiber at zero.  We thank Alex Takeda for this observation. 
\end{remark}

\subsection{Review of Liouville manifolds, skeleta, and gluing}  \label{sec: liouville}

By definition, a {\em Liouville domain} $(W, \omega = d \lambda)$ is an exact symplectic manifold-with-boundary  
such that the Liouville vector field $Z = \omega^\# \lambda$ is outwardly transverse to the boundary, which we denote 
$\partial_\infty W$. This is closely related to the notion of {\em Liouville manifold}, which is a Liouville domain extended by an infinite 
symplectization cone of the form
$\R_{\ge 0} \times \partial_\infty W$.  
The typical examples of Liouville domains and manifolds are codisk and cotangent
bundles, respectively, with the tautological ``$pdq$'' form.   The textbook reference for these notions is \cite{Cieliebak-Eliashberg}.  
For our purposes, the Liouville manifold is the fundamental object, and we use 
domains only to be precise about various intermediate steps of constructions; as is common in the literature, we often pass 
back and forth between Liouville domains and Liouville manifolds without much comment.

\begin{definition}
We say a subset of a Liouville manifold is {\em conic} if it is invariant under the Liouville vector field.  By definition
the {\em skeleton} $\LL_W$ of a Liouville manifold $W$ is the maximal conic compact subset.  
\end{definition}
The Liouville vector field gives 
$W \setminus \LL_W$ a free action of $\RR$, for which the quotient is canonically identified with $\partial_\infty W$.  More generally,
for any conic subset $K \subset W$, we write 
$\partial_\infty K$ for $(K \setminus \LL_W) / \RR$ (or equivalently for its intersection with the boundary of some Liouville subdomain 
completing to the Liouville manifold $W$).  

\begin{definition}
 For a subset $V \subset \partial_\infty W,$ 
 the {\em relative skeleton} associated to $V$ is the subset
 $\LL_{W, V} := \LL_W \sqcup \RR V$. 
 (Usually, we are interested in situations where $V$ is Legendrian and itself the skeleton of a Liouville hypersurface in $\partial_\infty W.$)
\end{definition}

The outward condition
on the vector field creates an evident difficulty with gluing Liouville domains along their boundaries.  However, when 
gluing along a standard neighborhood
of a Legendrian, one can modify the vector field in such a way 
that the resulting glued-up manifold
is again Liouville.  
The modification is canonical up to contractible choices.   
This construction originates in \cite{Weinstein}; we briefly review the idea here. 

Consider a smooth Legendrian $\cL \subset \partial_\infty W$.  
Let us fix such a standard neighborhood 
$$\eta: Nbd_{\partial_\infty W}(\cL)\hookrightarrow J^1 \cL,$$
which we extend to a neighborhood
$$\xi: Nbd_{W}(\cL) \hookrightarrow J^1 \cL \times \RR_{\ge 0}
= \rT^* \cL \times \RR \times \RR_{\le 0} = \rT^* (\cL \times \RR_{\le 0}),$$
chosen so that the Liouville flow on $W$ 
is identified with the translation action on $\RR_{\le 0}$.   

We now define another space $\widetilde{W}$ by first modifying the Liouville flow on $W$ so it is 
carried instead to the cotangent scaling on $\rT^* (\cL \times \RR_{\le 0})$ when sufficiently close to $\partial \cL$, and then
taking conic completion.  
$\widetilde{W}$ is an exact symplectic manifold-with-boundary; in fact,  it is a Liouville sector with exact boundary in the sense of \cite[Section 2]{GPS1}, 
whose skeleton $\LL_{\widetilde{W}}$ is 
the relative skeleton $\LL_{W,\cL}$ defined above.

Note that $\eta$ induces an identification of the {\em actual} (not ideal-at-infinity) boundary of $\widetilde{W}$ with $J^1 \cL$; we denote this also 
$\widetilde{\eta}: \partial \widetilde{W} \xrightarrow{\sim} J^1 \cL$.  Similarly we have 
$$\widetilde{\xi}: Nbd_{\widetilde{W}}(\cL) \hookrightarrow \rT^* (\cL \times \RR_{\le 0})$$ 
which by construction matches the Liouville structure on $\widetilde{W}$ to the standard cotangent scaling, at least over some $(-\epsilon, 0] \subset \RR_{\le 0}$.  
Under the (symplectic but not Liouville-preserving) embedding $W \subset \widetilde{W}$, whose image contains a neighborhood of $\cL$,  we have $\widetilde{\xi}|_{W} = \xi$.

Given Liouville domains $(W, \partial_\infty W)$ and $(W', \partial_\infty W')$, and a smooth manifold $\cL$ with Legendrian embeddings 
$\partial_\infty W \hookleftarrow \cL \hookrightarrow \partial_\infty W'$, we write 
$$W \#_\cL W' := \widetilde{W} \cup_{J^1 \cL} \widetilde{W'}.$$ 
The space $W \#_{\cL} W'$ is a Liouville manifold, but we use the same notation for some domain completing 
to it.  On skeleta we have: 
$$\LL_{W \#_\cL W'} = \LL_{W, \cL} \cup_\cL \LL_{W', \cL}.$$

We will also want to glue together some conical Lagrangians in $W,W'$ to form a new Lagrangian in $W\#_\cL W'.$
\begin{definition} \label{def: biconic} 
We say that a conic subset $\VV \subset W$ is ($\xi$-){\em biconic} if its image under $\xi$ is invariant 
also under the cotangent scaling in the  $\rT^* (\cL \times \RR_{\le 0})$ direction.  Inspection of the deformation used in \cite{Weinstein} 
shows that biconic subsets remain conic in $\widetilde{W}$; we write $\widetilde{\VV} \subset \widetilde{W}$ 
for the saturation under the Liouville flow.  
\end{definition}
It is easy to rephrase the biconicity condition in terms of the standard neighborhood $\eta$ chosen above:

\begin{lemma} \label{lem: biconic}
For $\cL \subset \partial_\infty W$ a smooth Legendrian, and $\VV\subset W$ a (possibly singular) Lagrangian, the 
following are equivalent: 
\begin{itemize}
\item $\VV$ is conic  and $\eta(\partial_\infty \VV) \subset \rT^*\cL \times 0 \subset J^1 \cL$.
\item $\VV$ is biconic.
\end{itemize}
Moreover, in this case $\xi(\VV \cap Nbd_W(\cL)) = \eta(\partial_\infty \VV)|_{\rT^* \cL} \times \RR_{\le 0}  \subset \rT^* \cL \times \rT^* \RR_{\le 0}$.  $\square$
\end{lemma}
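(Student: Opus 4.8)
The plan is to prove Lemma \ref{lem: biconic} by unwinding the definitions of the standard neighborhoods $\eta$ and $\xi$ and of the Liouville/cotangent scaling identifications set up just before Definition \ref{def: biconic}. The key observation is that the extension $\xi$ was chosen precisely so that the Liouville flow on $W$ matches translation in the $\RR_{\le 0}$-factor of $\rT^*(\cL\times\RR_{\le 0})=\rT^*\cL\times\RR\times\RR_{\le 0}$, while biconicity additionally asks for invariance under the \emph{fiber} scaling of $\rT^*(\cL\times\RR_{\le 0})$, which acts by simultaneously scaling the $\rT^*\cL$-fiber coordinate, the $\RR$-coordinate (the $J^1$ jet coordinate), and the conormal coordinate to $\RR_{\le 0}$. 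So the content is purely a matter of translating ``invariant under two commuting $\RR$-actions whose orbits span the $\RR_{\le 0}$ and fiber directions'' into a concrete normal form.

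First I would set up coordinates: write $\xi(Nbd_W(\cL))\subset \rT^*\cL\times\RR\times\RR_{\le 0}$ with $\rT^*\cL$-coordinate $(q,p)$, jet coordinate $z\in\RR$ (so that $\eta$ identifies $Nbd_{\partial_\infty W}(\cL)$ with $J^1\cL=\rT^*\cL\times\RR=\{(q,p,z)\}$), and base coordinate $t\in\RR_{\le 0}$, with the Liouville flow being $t\mapsto t+s$. A conic subset $\VV$ is then one invariant under $t$-translation, so $\xi(\VV)$ is of the form $\widehat\VV\times\RR_{\le 0}$ for some subset $\widehat\VV\subset\rT^*\cL\times\RR$; tracking through the identification of $\partial_\infty W$ with the slice $\{t=0\}$ modulo the flow, $\widehat\VV=\eta(\partial_\infty\VV)\subset J^1\cL$. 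Now the fiber scaling of $\rT^*(\cL\times\RR_{\le 0})$ acts as $(q,p,z,t,p_t)\mapsto(q,r p,r z, t, r p_t)$; since $\VV$ is Lagrangian in $W$ and lies in $\{p_t=0\}$ near $\cL$ (that being where the flow is translation, i.e.\ $p_t=-1$ is rescaled — I'd double-check the precise sign/normalization convention from the paragraph defining $\xi$), invariance under this scaling of $\xi(\VV)=\widehat\VV\times\RR_{\le 0}$ is equivalent to $\widehat\VV$ being invariant under $(q,p,z)\mapsto(q,rp,rz)$, i.e.\ $\widehat\VV$ is a cone over the zero section in the $(p,z)$-directions. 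The condition $\eta(\partial_\infty\VV)\subset\rT^*\cL\times 0$ is exactly the statement that $\widehat\VV\subset\{z=0\}$; combined with the fact that $\eta(\partial_\infty\VV)$ is automatically conic in the $\rT^*\cL$-direction (being the contact boundary of a conic Lagrangian), this is equivalent to $\widehat\VV$ being a $(p,z)$-cone contained in $\{z=0\}$. That gives both implications, and the final ``Moreover'' clause is just the computed normal form $\xi(\VV\cap Nbd_W(\cL))=(\widehat\VV|_{z=0})\times\RR_{\le 0}=\eta(\partial_\infty\VV)|_{\rT^*\cL}\times\RR_{\le 0}$.

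The main obstacle I anticipate is bookkeeping the precise conventions: which coordinate carries the contact ``$dz$'' jet direction, what the sign of $t$ is, and exactly how the Liouville flow on $W$ is normalized against the cotangent scaling on $\rT^*(\cL\times\RR_{\le 0})$ — in particular whether ``invariant under the Liouville vector field'' for $\VV\subset W$ already forces $\partial_\infty\VV$ to be conic inside $J^1\cL$ in the appropriate sense, so that the two bullets really are equivalent and not merely related. I would resolve this by working entirely inside the model $\rT^*(\cL\times\RR_{\le 0})$ after applying $\xi$, where ``conic'' unambiguously means invariant under $\RR_{\le 0}$-translation (= the modified-to-be-translation Liouville flow, cf.\ the construction of $\widetilde W$) and ``biconic'' means additionally invariant under the genuine cotangent fiber scaling, and then invoke the fact (Lemma \ref{def: biconic}'s preceding discussion) that biconic subsets remain conic after the Weinstein deformation. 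No holomorphic-curve or symplectic-topology input is needed — it is a local computation in a cotangent bundle — so the only real work is making the coordinate identifications consistent with the earlier setup.
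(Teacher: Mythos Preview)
The paper gives no proof of this lemma --- the $\square$ at the end of the statement signals it is left as an immediate consequence of the definitions.  Your plan to unwind the standard-neighborhood coordinates is therefore exactly the intended one, and your identification of the content as a purely local computation in the model $\rT^*(\cL\times\RR_{\le 0})$ is correct.

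That said, your sketch has a genuine gap.  First, a minor point: under the identification $J^1\cL\times\RR_{\le 0}\cong\rT^*(\cL\times\RR_{\le 0})$ the jet coordinate $z$ \emph{is} the fiber coordinate over $\RR_{\le 0}$, so there is no separate $p_t$ to introduce.  More importantly, the Lagrangian hypothesis on $\VV$ is what makes the equivalence hold, and your parenthetical ``being the contact boundary of a conic Lagrangian'' is not the correct justification.  For the direction ``biconic $\Rightarrow$ first bullet'': $\widehat\VV=\eta(\partial_\infty\VV)$ is Legendrian in $J^1\cL$ and, by biconicity, invariant under $(p,z)\mapsto(rp,rz)$; hence the Euler field $p\partial_p+z\partial_z$ is tangent at smooth points, and evaluating the contact form $dz-p\,dq$ on it gives $z=0$.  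For ``first bullet $\Rightarrow$ biconic'': a Legendrian $\widehat\VV\subset\{z=0\}$ is a Lagrangian in $\rT^*\cL$ on which the canonical one-form $p\,dq$ vanishes (since $(dz-p\,dq)|_{\widehat\VV}=0$ and $dz|_{\widehat\VV}=0$); as $p\,dq=-\iota_{p\partial_p}\omega_{\rT^*\cL}$, this forces the Liouville field $p\partial_p$ to be tangent, i.e.\ $\widehat\VV$ is conic in $\rT^*\cL$ and hence a $(p,z)$-cone.  Without invoking the Legendrian condition the backward implication fails: for instance $\widehat\VV=\{p=1,\,z=0\}\subset J^1\RR$ lies in $\{z=0\}$ yet is not a $(p,z)$-cone --- but it is also not Legendrian, which is exactly the point your argument must use.
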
 

Given biconic Lagrangians $\VV \subset W$ and $\VV' \subset W'$ with matching ends in the sense that 
$\tilde{\eta}(\partial_\infty \VV) = \tilde{\eta}' (\partial_\infty \VV')$, we may form a new Lagrangian
$$\VV\#_{\cL} \VV' := \widetilde{\VV} \cup \widetilde{\VV'}$$
in the glued manifold $W\#_\cL W'.$ 
From the above discussion, we see that the glued Lagrangian $\VV\#_\cL\VV'$ is conic and in a chart near the gluing region is a product 
$\eta(\partial_\infty \VV)|_{\rT^* \cL} \times (-\epsilon, \epsilon)  \subset \rT^* \cL \times \rT^* (-\epsilon, \epsilon)$. 

In this article, we apply the above constructions in the special case 
when $\widetilde{W}' = \rT^*M$ and $\partial M = \cL$.   We term these {\em handle attachments} (though many authors reserve 
this for the case when $M$ is a ball).  For noncompact $M$ we require as usual conicality at infinity.   Note that such a noncompact $M$ 
has both an ordinary boundary $\partial M = \cL$ along which we glue, and an ideal boundary $\partial_\infty M$.  

The gluings we consider will usually occur in the situation where $\cL = (\partial F) \times G$ and we attach $\rT^* F \times \rT^*G$, respecting the product structure. 
In this situation, suppose we are given a conic $\VV \subset W$ which is not only biconic near $(\partial F) \times G$ as 
in Lemma \ref{lem: biconic},
but in addition factors locally as 
$$\eta(\partial_\infty \VV) = \partial F \times \LL \subset \rT^* \partial F  \times  \rT^*G$$ 
for some fixed conic Lagrangian $\LL \subset \rT^*G$.  
\begin{definition}
With $\cL=(\partial F)\times G$ and $\VV\subset W$ as above, the {\em extension of $\VV$ through the handle} 
is the gluing $\VV\#_\cL\VV',$ where we define
$$\VV' = F \times \LL \subset \rT^* F \times \rT^* G.$$ 

For $\Lambda \subset \partial_\infty W$ such that $\eta(\Lambda) \subset \rT^*\cL \times 0 \subset J^1 \cL$, 
the relative skeleton $\LL_{W, \Lambda}$ is biconic.  We say that we 
{\em extend $\Lambda$ through the handle} to mean that we extend $\LL_{W, \Lambda}$ through the handle, and take the boundary 
at infinity of the result.  
\end{definition}
Note that if $\Lambda$ is smooth (with boundary along $(\partial F) \times G$), then so is the extension.  More generally,
other structures or properties of $\Lambda$ which respect the biconic structure can also be extended through the handle.

Let us give a criterion for biconicity:

\begin{lemma} \label{lem: polar} 
Let $E \to M$ be a vector bundle, and $\cL = \partial_\infty \rT_M^* E$ be the conormal Legendrian to the zero section.  
Then there are local coordinates near $\cL$ such that: 

For any collection of submanifolds $S_\alpha \subset E$ which are conic with respect to the scaling of $E$, if 
a Lagrangian $\Lambda\subset T^*E$ is contained in the union  $\bigcup_\alpha \rT^*_{S_\alpha} E$, then $\Lambda$ is biconic along $\cL$.  
\end{lemma}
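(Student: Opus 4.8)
The plan is to write down explicit coordinates near the conormal Legendrian $\cL = \partial_\infty \rT^*_M E$, verify that in these coordinates the biconicity criterion of Lemma \ref{lem: biconic} is met, and then observe that the hypothesis $\Lambda \subset \bigcup_\alpha \rT^*_{S_\alpha} E$ forces exactly the required vanishing of the relevant cotangent coordinates. First I would set up local coordinates: choose a local trivialization so that $E$ looks like $\rT^* M$-worth of data, namely coordinates $q$ on (an open set of) $M$ and fiber coordinates $v$ on the vector bundle $E$, so $E$ is locally $\{(q,v)\}$ with the scaling $t\cdot(q,v) = (q,tv)$. Then $\rT^* E$ has coordinates $(q,v,p,w)$ with $p$ dual to $q$ and $w$ dual to $v$, and the Liouville form is $p\,dq + w\,dv$. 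The zero section of $E$ is $\{v=0\}$, its conormal is $\{v = 0, p = 0\}$, and $\cL$ is its boundary at infinity, i.e. the locus $\{v=0,\ p=0\}$ with $w \ne 0$, projectivized. A standard Weinstein neighborhood identifies $Nbd(\cL)$ with $J^1\cL$ where — after the usual rescaling trading the direction transverse to the zero section of $E$ for an $\RR_{\le 0}$ factor — the coordinates $(q,p)$ together with the contact direction along $w$ play the role of $\rT^*\cL$, and $v$ (suitably reparametrized against $|w|$) plays the role of the extra $\RR$ (or $\RR_{\le 0}$) factor.

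Next, by Lemma \ref{lem: biconic}, biconicity of a conic Lagrangian $\Lambda$ along $\cL$ is equivalent to the condition $\eta(\partial_\infty \Lambda) \subset \rT^*\cL \times 0 \subset J^1\cL$; in the coordinates above this says precisely that the ``$\RR$-factor'' coordinate — which is the one measuring the $v$-position — vanishes on $\partial_\infty \Lambda$. So I would reduce the lemma to the following concrete claim: if $S \subset E$ is conic for the $E$-scaling and $\Lambda \subset \rT^*_S E$, then $\Lambda$ meets $\cL$ only along $\{v = 0\}$. This is where the hypothesis does all the work. Conicity of $S$ under $v \mapsto tv$ means that $S$ is cut out (near any of its points over the zero section) by equations homogeneous in $v$; the conormal variety $\rT^*_S E$ is then invariant under the scaling $(q,v,p,w)\mapsto(q,tv,p,t^{-1}w)$. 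Consequently any point of $\rT^*_S E$ lying over a point with $v \neq 0$ can be rescaled to push $|w| \to \infty$ while keeping $v$ bounded away from $0$; conversely the boundary-at-infinity locus $\cL$ is reached by sending $w \to \infty$ with $v \to 0$. Matching these shows $\partial_\infty(\rT^*_S E)$ can only meet $\cL$ where $v = 0$, which is the desired vanishing. Taking the union over $\alpha$ is harmless since the condition is checked pointwise on $\Lambda$.

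The main obstacle I expect is bookkeeping rather than conceptual: one must pin down the precise normalization of the Weinstein neighborhood $\eta$ (the rescaling that trades the normal direction to the zero section of $E$ for the $\RR_{\le 0}$-factor of $J^1\cL \times \RR_{\ge 0}$), and check that under this normalization the ``extra $\RR$ coordinate'' of Lemma \ref{lem: biconic} really is a positive multiple of the $E$-fiber coordinate $v$ (up to a factor depending on $|w|$ that does not vanish near $\cL$) — only then does ``$\Lambda$ supported over a $v$-conic set'' translate into ``the extra coordinate vanishes on $\partial_\infty\Lambda$''. Once that identification is fixed, the homogeneity argument for conormals of conic sets is routine. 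A secondary point to be careful about is that $S_\alpha$ need not be smooth, so $\rT^*_S E$ should be interpreted as the (closure of the) conormal to the smooth locus; the homogeneity under the $E$-scaling still holds since the smooth locus of a $v$-conic set is $v$-conic, and closure commutes with the (proper) scaling action at infinity.
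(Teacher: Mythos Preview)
Your overall strategy --- set up coordinates, invoke Lemma~\ref{lem: biconic}, and check that the ``extra $\RR$'' coordinate in $J^1\cL = \rT^*\cL\times\RR$ vanishes on $\partial_\infty\Lambda$ --- is exactly right, and matches the paper's approach. The gap is in your identification of what that extra $\RR$ coordinate actually is. You assert that it is (a rescaling of) the fiber coordinate $v$; but $v$ is a vector of dimension $\mathrm{rank}(E)$, so for $\mathrm{rank}(E)>1$ this cannot be a single real coordinate. In fact a direct computation of the Reeb flow on the cosphere bundle (with Liouville form $p\,dq + w\,dv$) shows that the Reeb direction at a point $(0,w)\in\cL$ moves $v$ \emph{parallel to $w$}, so the extra $\RR$ coordinate is the scalar pairing $\langle v,w\rangle$, while the components of $v$ orthogonal to $w$ belong to the $\rT^*\cL$ factor. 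Consequently your reduced claim ``$\partial_\infty\Lambda$ meets $\cL$ only where $v=0$'' is a tautology (since $\cL\subset\{v=0\}$ already) and does not establish biconicity.

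The paper organizes exactly this observation via the \emph{polar hypersurface} $P:=\{\langle v,w\rangle=0\}\subset\rT^*E$: one checks that $\partial_\infty P$ is, locally near $\cL$, the $\rT^*\cL\times 0$ slice of $J^1\cL$, and then observes that the conormal to any $E$-conic subset $S$ lies in $P$ because the Euler vector field $v\,\partial_v$ is tangent to $S$ and hence annihilated by conormal covectors, giving $\langle v,w\rangle=0$. Your scaling symmetry $(q,v,p,w)\mapsto(q,tv,p,t^{-1}w)$ is the integrated form of this infinitesimal statement, but the conclusion you draw from it (``push $|w|\to\infty$ while $v$ stays bounded away from $0$'') is mis-stated --- the scaling that enlarges $|w|$ shrinks $v$ --- and in any case the cleanest consequence of the symmetry is precisely that the invariant $\langle v,w\rangle$ must vanish on conormals to conic sets, which is the paper's polar-hypersurface statement.
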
 
\begin{proof}
Note $\rT^*E = E \oplus E^\vee \oplus \rT^* M$ as a bundle over $M$.  We write $P \subset \rT^* E$ for 
the polar hypersurface defined as the kernel of the pairing between $E$ and $E^\vee$.
Then $P$ contains the conormal to any conic subset of $E$, and $\partial_\infty P$ can be locally identified with the cotangent bundle to $\cL$,
compatibly with Liouville structure.  Indeed, the cotangent fibers are locally near $\cL$ the conormals to (codimension-1)
hyperplanes through the origin in fibers of $E$. 

The desired standard coordinates are swept out by applying the Reeb flow to $Nbd_{P}(\cL)$. 
\end{proof}

\begin{corollary} \label{cor: biconic} 
Let $M \subset N$ be a submanifold.  Then the conormal $\rT_M^* N$ admits standard coordinates with respect
to which any conic Lagrangian $\Lambda \subset \rT^*N$ contained in a union of conormals to submanifolds of $M$ is biconic. 
\end{corollary}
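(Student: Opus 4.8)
The plan is to reduce the corollary to Lemma~\ref{lem: polar} by taking $E$ to be the normal bundle $\nu_{M/N}$ of $M$ in $N$. First I would fix a tubular neighborhood: a diffeomorphism from a neighborhood of the zero section in $E := \nu_{M/N}$ onto a neighborhood of $M$ in $N$ restricting to the identity on $M$, which we identify with the zero section of $E$. A diffeomorphism of manifolds induces a contactomorphism of cosphere bundles and, via the cotangent lift, a symplectomorphism of the relevant open pieces of the cotangent bundles; so we obtain a symplectomorphism of a neighborhood of $\partial_\infty \rT^*_M E$ in $\rT^* E$ with a neighborhood of $\cL := \partial_\infty \rT^*_M N$ in $\rT^* N$, carrying $\partial_\infty \rT^*_M E$ onto $\cL$.

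Next I would observe that any subset $Z \subset M$, viewed via the zero section as a subset of $E$, is conic for the fiberwise scaling of $E$, since that scaling fixes the zero section. Because conormals are natural under diffeomorphism and the tubular-neighborhood map fixes $M$ pointwise, the symplectomorphism above carries $\rT^*_Z E$ (restricted to the neighborhood) onto $\rT^*_Z N$. Hence, if $\Lambda \subset \rT^* N$ is conic and, near $\cL$, contained in a union $\bigcup_\alpha \rT^*_{Z_\alpha} N$ of conormals to subsets $Z_\alpha \subset M$, then in the model $\rT^* E$ it lies in $\bigcup_\alpha \rT^*_{Z_\alpha} E$ with each $Z_\alpha$ a conic subset of $E$. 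Lemma~\ref{lem: polar} then supplies local coordinates near $\partial_\infty \rT^*_M E$ with respect to which every such $\Lambda$ is biconic; transporting these along the tubular-neighborhood symplectomorphism gives the promised standard coordinates near $\cL$, and the biconicity conclusion transports since it is formulated intrinsically on the standard neighborhood (via Lemma~\ref{lem: biconic}, as a condition on $\eta(\partial_\infty \Lambda) \subset J^1 \cL$).

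I do not expect a genuine obstacle: the substance is entirely contained in Lemma~\ref{lem: polar}, and what remains is bookkeeping --- verifying that the tubular neighborhood intertwines the contact structures at infinity (automatic), that conormals transport correctly (naturality of the cotangent lift), and interpreting ``conormal to $Z_\alpha$'' when $Z_\alpha$ is not smooth. For the last point, biconicity via Lemma~\ref{lem: biconic} is a condition on the underlying conic subset $\eta(\partial_\infty \Lambda)$, so the usual closure/Whitney subtleties in defining conormals to singular sets do not interfere; the only mild care needed is to shrink the neighborhood of $\cL$ so that there $\Lambda$ is genuinely contained in a union of conormals to subsets of $M$.
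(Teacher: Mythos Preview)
Your proposal is correct and follows exactly the paper's approach: identify a tubular neighborhood of $M$ with the normal bundle, note that subsets of $M$ are trivially conic for the bundle scaling, and apply Lemma~\ref{lem: polar}. The paper's proof is a two-sentence version of what you wrote; your additional remarks about transporting conormals and handling singular $Z_\alpha$ are sound but not needed at the level of detail the paper adopts.
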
 
\begin{proof}
Identify a tubular neighborhood of $M$ with a subset of the normal bundle $\rT_M N$. Strata contained in $M$ are (trivially) conic,
so we may apply Lemma \ref{lem: polar}. 
\end{proof} 

Intersections of Legendrians satisfying the hypotheses of the above lemma are quite special.
To illustrate what can go wrong, we
give an example where biconicity cannot be achieved by any choice of coordinates.  

\begin{example}
Consider three lines through the origin in $\RR^3,$ all lying in the same plane $P$, and let $\VV \subset \rT^*\RR^3$ be the union of 
their conormals.  Then $\partial_\infty \VV$ consists of three Legendrian surfaces $C_1, C_2, C_3$ (diffeomorphic to $S^1 \times \RR$) 
whose pairwise intersections $C_i \cap C_j$ consist of two points, namely the two conormal directions to $P$ at the origin.  These two points
are also the intersection of all three surfaces.  

The ideal boundary $\partial_\infty \VV$ is not biconic (for any choice of coordinates) along any of these surfaces.  Indeed, if it were biconic to (say) $C_1$,
then at a triple intersection point $*$, both $C_2$ and $C_3$ would be identified with some conic Lagrangian in $\rT^* C_1$ meeting $C_1$ only
at $*$.  Since the only such conic Lagrangian is the cotangent fiber, this would imply that locally $C_2 = C_3$, which is contradicted by the definition of the $C_i$.  
By the same reasoning, no configuration of Legendrians in which three smooth components pass through the same point and are pairwise transverse
can be biconic, in the sense above, with respect to any of these components.  

Note that the above reasoning does not disallow such a configuration 
of Lagrangians from being contained in the skeleton of a Liouville hypersurface.  (For instance, begin with a ball carrying the radial Liouville form, take
three mutually transverse linear Lagrangians through the origin, attach handles along their ideal boundaries, and then contactize.) 
In fact, the constructions above can be made more generally in neighborhoods of such Liouville hypersurfaces (which take the place 
of the cotangent bundle of a smooth Legendrian); for instance, standard models for the gluing of Liouville manifolds along such Liouville hypersurfaces can be found in \cite{Avdek}, \cite[Section 2]{GPS1}, \cite[Section 3.1]{Eliashberg},
\cite[Section 9]{GPS3},  \cite[Section 2]{AGEN3}.
\end{example}

\subsection{Review of FLTZ Lagrangian} \label{sec: FLTZ}

Fix the usual data necessary to define a toric variety: a rank $n$ lattice $M$ and a rational polyhedral fan 
$\Sigma \subset M_\RR:=M\otimes_\ZZ \RR$.  
Consider the $n$-torus 
$$\widehat{M} := \Hom(M, S^1) = M_\RR^\vee/M^\vee.$$  
Note the canonical isomorphism $\rT^* \widehat{M} = \widehat{M} \times M_\RR$.  
For any subset  $Z \subset M$, we write $Z^\perp \subset \widehat{M}$ for the locus of maps carrying $Z$ to 
$1 \in S^1$.  

  For each cone $\sigma$ in $\Sigma,$ we write $\LL_\sigma$ for the Lagrangian
  \begin{equation} \label{eq: Lcone}
	\LL_\sigma = \sigma^\perp\times \sigma\subset \widehat{M} \times M_\RR=\rT^* \widehat{M}.
  \end{equation}
  For a cone $\sigma$ we write $\partial_\infty \sigma$ for its projectivization, so that 
  $\partial_\infty \LL_{\sigma} = \sigma^\perp \times \partial_\infty \sigma$. 

 The union   of these conic Lagrangians is the 
  {\em FLTZ Lagrangian} 
  \[
  \LL(\Sigma) :=\bigcup_{\sigma\in\Sigma}\LL_\sigma.
  \]

 Recall that  
for a cone $\sigma$ in the fan $\Sigma$, we write $\Sigma/\sigma$ for the normal fan to $\sigma$ in   
$M/\sigma$.  Using the canonical identification $\widehat{M / \sigma} \cong \sigma^\perp$, we consider $\LL({\Sigma/\sigma}) \subset \rT^*\sigma^\perp.$
The Legendrian $\partial_\infty \LL(\Sigma)$ admits biconic local coordinates respecting the geometry of these quotient fans: 

\begin{lemma}\label{lem:fltz-inductive}
  For a fan $\Sigma$, there is a system of standard coordinates 
   $$\eta_\sigma: N_\sigma =  Nbd(\partial_\infty\LL_\sigma) \hookrightarrow J^1 \partial_\infty\LL_\sigma = \rT^* \partial_\infty\LL_\sigma \times \RR,$$
  indexed by cones $\sigma \in \Sigma,$
     for which $\LL_\Sigma$ is biconic:
  $\eta_\sigma(\partial_\infty\LL(\Sigma) \cap N_\sigma ) \subset \rT^* \partial_\infty\LL_\sigma \times 0$. 

 Moreover, for any cone $\tau$ containing $\sigma$ in its closure, the Legendrian boundary of $\LL_\tau$ has an expression in $N_\sigma$ as a lower-dimensional FLTZ Lagrangian:
  \begin{equation}\label{eq:fltz-inductive}
 \eta(\partial_\infty \LL_\tau \cap N_\sigma)  =
\LL_{\tau/\sigma}\times \partial_\infty \sigma \times 0 \subset 
\rT^*\sigma^\perp\times \rT^* \partial_\infty \sigma \times 0 = 
\rT^*\partial_\infty \LL_\sigma \times 0.
  \end{equation}
 The coordinates $\eta_\sigma$ determine for each cone $\sigma$ a Liouville hypersurface 
 \[R_\sigma := \eta_\sigma^{-1}(\rT^* \partial_\infty\LL_\sigma \times 0)
 \subset \partial_\infty\rT^*\widehat{M}
 \]
 containing $\partial_\infty \LL_\sigma$ as its skeleton, and we may choose these coordinates to ensure that 
 for $\sigma\subset \bar{\tau}$ as above, we have 
 $R_\tau \cap N_\sigma \subset R_\sigma$. 
\end{lemma}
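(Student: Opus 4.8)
The plan is to induct on the rank $n$ of $M$, building all the coordinate systems $\eta_\sigma$ coherently out of Corollary~\ref{cor: biconic} and then reading off \eqref{eq:fltz-inductive} and the nested compatibility from the construction. The local input is this: for a cone $\tau$ with $\sigma\subset\bar\tau$, the Lagrangian $\LL_\tau=\tau^\perp\times\tau$ is a conic subset of the conormal $\rT^*_{\tau^\perp}\widehat{M}=\tau^\perp\times\Span(\tau)$, and $\tau^\perp\subseteq\sigma^\perp$; hence, near $\LL_\sigma$, the part of $\LL(\Sigma)$ that matters is a union of conormals to subsets of the subtorus $\sigma^\perp\subset\widehat{M}$, and Corollary~\ref{cor: biconic} applies.

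\emph{The charts and biconicity.} I would construct the $\eta_\sigma$ in order of increasing $\dim\sigma$. For $\sigma$ a ray, $\partial_\infty\LL_\sigma=\sigma^\perp\times\partial_\infty\sigma$ is smooth (a point projectively), and every stratum of $\LL(\Sigma)$ meeting a neighborhood of $\LL_\sigma$ is an $\LL_\tau$ with $\sigma\subset\bar\tau$; so Corollary~\ref{cor: biconic}, applied to $\sigma^\perp\subset\widehat{M}$, directly gives a chart $\eta_\sigma:N_\sigma=Nbd(\partial_\infty\LL_\sigma)\hookrightarrow J^1\partial_\infty\LL_\sigma$ in which $\partial_\infty\LL(\Sigma)\cap N_\sigma$ is biconic, i.e. lies in $\rT^*\partial_\infty\LL_\sigma\times 0$. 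For higher-dimensional $\sigma$ the same works over the relative interior of $\partial_\infty\sigma$; near a corner of $\partial_\infty\LL_\sigma$ lying over a proper face $\partial_\infty\rho$, the strata of $\LL(\Sigma)$ encountered are the $\LL_\tau$ with $\rho\subset\bar\tau$, so Corollary~\ref{cor: biconic} applied to the larger subtorus $\rho^\perp\supseteq\sigma^\perp$ produces the chart $\eta_\rho$ (already constructed), and I would require $\eta_\sigma$ to refine $\eta_\rho$ near this corner. Since $\partial_\infty\LL_\sigma$ is itself one of the strata biconic in the chart $\eta_\rho$, biconicity of $\partial_\infty\LL(\Sigma)$ along all of $\partial_\infty\LL_\sigma$ then follows by transitivity. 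This simultaneously defines the Liouville hypersurface $R_\sigma:=\eta_\sigma^{-1}(\rT^*\partial_\infty\LL_\sigma\times 0)$, whose skeleton is $\partial_\infty\LL_\sigma$.

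\emph{Normal form and the induction.} A tubular neighborhood of $\sigma^\perp$ identifies a neighborhood of $\rT^*_{\sigma^\perp}\widehat{M}$ with $\rT^*\sigma^\perp\times\rT^*(\Span(\sigma)^\vee)$, and under $\widehat{M/\sigma}\cong\sigma^\perp$ one computes directly that for $\sigma\subset\bar\tau$ the Lagrangian $\LL_\tau=\tau^\perp\times\tau$ goes to $\LL_{\tau/\sigma}\times(\text{a cone recording }\sigma)$, with $\LL_{\tau/\sigma}=(\tau/\sigma)^\perp\times(\tau/\sigma)\subset\rT^*\sigma^\perp$. Choosing $\eta_\sigma$ compatibly with this linear product splitting and passing to ideal boundaries gives exactly
\[
\eta_\sigma(\partial_\infty\LL_\tau\cap N_\sigma)=\LL_{\tau/\sigma}\times\partial_\infty\sigma\times 0\subset\rT^*\sigma^\perp\times\rT^*\partial_\infty\sigma\times 0=\rT^*\partial_\infty\LL_\sigma\times 0,
\]
which is \eqref{eq:fltz-inductive}. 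Since the $\tau/\sigma$ for $\sigma\subset\bar\tau$ exhaust the fan $\Sigma/\sigma$, this identifies the configuration $(\LL(\Sigma),\{\partial_\infty\LL_\tau\}_{\sigma\subset\bar\tau})$ near $\partial_\infty\LL_\sigma$ with the product of $\partial_\infty\sigma$ and the FLTZ configuration of $\Sigma/\sigma$, a fan of rank $n-\dim\sigma<n$. The inductive hypothesis applied to $\Sigma/\sigma$ provides coordinate systems and Liouville hypersurfaces satisfying the lemma for $\Sigma/\sigma$, and pulling these back through the product and the tubular neighborhood furnishes $\eta_\tau$ and $R_\tau$ near $N_\sigma$ with $R_\tau\cap N_\sigma\subset R_\sigma$. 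For all these choices to be globally consistent I would fix at the outset a nested family of tubular neighborhoods for the finite poset of subtori $\{\sigma^\perp\}_{\sigma\in\Sigma}$, with the neighborhood of $\tau^\perp$ refining that of $\sigma^\perp$ whenever $\tau^\perp\subset\sigma^\perp$; such a family exists by the parametrized uniqueness of tubular neighborhoods.

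I expect the main difficulty to be this coherence, not any isolated step: one must arrange the charts $\eta_\sigma$, living over strata of different dimensions, to refine one another near the non-smooth corners of the Legendrians $\partial_\infty\LL_\sigma=\sigma^\perp\times\partial_\infty\sigma$, while at the same time realizing the product normal forms \eqref{eq:fltz-inductive} on which the induction on $n$ depends. Everything purely local is controlled by Corollary~\ref{cor: biconic}; the work lies in running it over all cones at once and in checking that the corner refinements are compatible with the inductive product structure.
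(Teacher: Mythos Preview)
Your proposal is correct and follows essentially the same approach as the paper: both identify that the only $\LL_\tau$ whose closure meets $\LL_\sigma$ are those with $\sigma\subset\bar\tau$, apply Corollary~\ref{cor: biconic} to the inclusion $\tau^\perp\subset\sigma^\perp$ for biconicity, and derive \eqref{eq:fltz-inductive} from the local product splitting $\tau\approx(\tau/\sigma)\times\sigma$ (so that $\LL_\tau$ locally looks like $\LL_{\tau/\sigma}\times\sigma$). The paper's proof is terser on the coherence issue---it dispatches the nested compatibility of the $\eta_\sigma$ and $R_\sigma$ with the single phrase ``an appropriate system of Weinstein neighborhoods for the smooth Lagrangians $\LL_\sigma$''---whereas you spell this out via an induction on rank and a nested family of tubular neighborhoods for the subtori $\sigma^\perp$; this is a perfectly reasonable way to make that step precise.
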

\begin{proof}  
   First observe that if the intersection $\overline{\LL_\tau} \cap \LL_\sigma = (\tau^\perp \times \overline{\tau}) \cap (\sigma^\perp \times \sigma)$ is ever nonempty, 
   then the intersection $\overline{\tau} \cap \sigma$ must itself be nonempty, so that  
   $\sigma$ is contained in the closure of $\tau.$ Thus, for each $\sigma,$ it is necessary to prove the biconicity near $\partial_\infty \LL_\sigma$ of $\LL_\tau$ for such $\tau.$ Note that in this case, there is a contravariant inclusion $\sigma^\perp\supset \tau^\perp.$
   Moreover, in this case we will deduce the inductive characterization of the Legendrian $\partial_\infty\LL_\tau$ in terms of the FLTZ Lagrangian $\LL_{\tau/\sigma}$ from the fact that near $\sigma,$ the cone $\tau$ can be described as a product $\tau/\sigma\times \sigma,$ so that the Lagrangian
   $\LL_\tau = \tau^\perp\times \tau$ locally looks like $( (\tau/\sigma)^\perp \times (\tau/\sigma) )\times \sigma.$ (Note that for $\bar{\tau}\supset \sigma,$ the tori $(\tau/\sigma)^\perp$ and $\tau^\perp,$ which live inside respective tori $\widehat{M/\sigma}\subset \widehat{M}$, are actually the same torus.)

	 Since the Lagrangian $\LL_\sigma$ is contained in the conormal bundle $\rT^*_{\sigma^\perp}\widehat{M},$ we may produce standard coordinates for $\LL_\sigma$ using the method of Corollary \ref{cor: biconic}: restrict to a tubular neighborhood
	 \begin{equation*}
	 U_\sigma:=Nbd(\sigma^\perp)\subset \widehat{M}
 \end{equation*}
	 of $\sigma^\perp$, which may be identified with a subset of the normal bundle $N_{\sigma^\perp}\widehat{M},$ and then apply the polar hypersurface construction of Lemma \ref{lem: polar} to produce coordinates on the boundary of
	 \[
		 \rT^*_{\sigma^\perp}\widehat{M} = \rT^*_{\sigma^\perp}(N_{\sigma^\perp}\widehat{M})
		 =
		 (N_{\sigma^\perp}\widehat{M})\times (N_{\sigma^\perp}\widehat{M})^\vee \times T^*\sigma^\perp.\]
		 By Corollary \ref{cor: biconic}, the Lagrangian $\LL_\tau,$ which is contained in the conormal to $\tau^\perp\subset \sigma^\perp,$ will be biconic in these coordinates.

		 Unfortunately, the standard coordinates so constructed are not compatible as we range over cones in the fan $\Sigma$: 
		 For $\bar{\tau}\supset\sigma,$ we constructed a polar hypersurface $P_\sigma$ in the cotangent bundle of a neighborhood of $\sigma^\perp$ as the zero set of the function
		 \begin{equation}\label{eq:polar-pairing}
			f_\sigma: \rT^*U_\sigma = (N_{\sigma^\perp}\widehat{M})\times (N_{\sigma^\perp}\widehat{M})^\vee \times \rT^*\sigma^\perp\to \RR
		 \end{equation}
		 which pairs the first two factors. Near $\tau^\perp\subset \sigma^\perp,$ we constructed a hypersurface $P_\tau = \{f_\tau=0\}$ in the analogous way, but there is no inclusion between $P_\tau$ and (restriction near $\tau^\perp$ of) $P_\sigma,$ since the function $f_\tau$ contains more terms than $f_\sigma,$ corresponding to normal directions to $\tau^\perp$ which are contained in $\sigma^\perp.$

		 We will therefore need to modify our polar hypersurface construction. For each cone $\sigma$ in $\Sigma,$ we continue to write $U_\sigma\subset \widehat{M}$ for the tubular neighborhood of $\sigma^\perp$ in $\widehat{M}$, and we denote by $V_\sigma\subset \rT^*\widehat{M}$ a conic tubular neighborhood of $\LL_\sigma$ projecting to $U_\sigma$ under the projection $\rT^*\widehat{M}\to \widehat{M},$ chosen moreover so that $V_{\sigma},V_{\sigma'}$ are disjoint when $\bar{\sigma}\cap\bar{\sigma}'=\{0\}.$ We will also denote by $W_\sigma\subset M_\RR$ the image of $V_\sigma$ under the cotangent fiber projection $\rT^*\widehat{M_\RR}\to M_{\RR}.$

		 For each $V_\sigma,$ we will define a function 
		 $
			 g_\sigma:V_\sigma\to \RR,
		 $
		 a modification of the polar hypersurface function $f_\sigma$ described above, such that the zero loci of the restrictions $g_\sigma|_{V_\sigma\cap V_\tau}$ and $g_\tau|_{V_\sigma\cap V_\tau}$ agree and, by replacing the polar hypersurface $P_\sigma = \{f_\sigma=0\}$ with $P_\sigma':=\{g_\sigma=0\}$ in the polar hypersurface construction, we still obtain coordinates in which $\LL_\sigma$ is biconic. We will therefore obtain ribbons $R_\sigma$ which are compatible with each other, in the sense that $R_\tau\cap N_\sigma\subset R_\sigma,$ as desired.

We define $g_\sigma$ inductively on the dimension of the cone $\sigma.$ For $\sigma$ a 1-dimensional cone, we take
\[g_\sigma:=f_\sigma|_{V_\sigma},\]
the restriction to $V_\sigma$ of the polar hypersurface pairing $f_\sigma:\rT^*U_\sigma\to \RR$ defined in \eqref{eq:polar-pairing}.

Now let $\tau$ be a 2-dimensional cone spanned by rays $\sigma_1$ and $\sigma_2.$ We need to extend the function $(g_{\sigma_1},g_{\sigma_2}):V_{\sigma_1}\sqcup V_{\sigma_2}\to \RR$ to a function which is also defined on $V_\tau.$
To accomplish this, observe that any nonzero linear combination $(a_1 f_{\sigma_1}+a_2 f_{\sigma_2})|_{V_\tau}:V_\tau\to \RR,$ with $a_1,a_2\in\RR_{\geq 0},$ still defines a hypersurface $P_\tau'$ with the desired properties,
and this remains true if we allow $a_1(\bar{m}),a_2(\bar{m})$ to vary in the cosphere coordinate $\bar{m}\in \partial_\infty W_\tau$:
in other words, the hypersurface 
\[
P_\tau':=\{a_1(\bar{m})f_{\sigma_1}+a_2(\bar{m})f_{\sigma_2} = 0 \}
\]
contains the conormal to any submanifold in $\tau^\perp$ (since the functions $f_{\sigma_i}$ both vanish there), guaranteeing that components $\LL_{\rho}$ of the FLTZ skeleton with $\overline{\rho}\supset \tau$ are biconic with respect to the coordinates defined by $P_\tau'.$

Moreover, by taking $a_2(m)\equiv 0$ in $V_{\sigma_1}$ and $a_1(m)\equiv 0$ in $V_{\sigma_2}$ (and both $a_i$ nonzero in the intermediate region between the $V_{\sigma_i}$), we obtain a function 
\[
	g_\tau:=a_1(\bar{m})f_{\sigma_1}+a_2(\bar{m})f_{\sigma_2}: V_\tau\to \RR
\]
such that the zero locus of $g_i|_{V_{\sigma_i}}$ agrees with $\{g_{\sigma_i}=0\}$. Similarly, for each higher-dimensional cone $\rho$, we continue to interpolate among nonzero linear combinations of the functions $g_\sigma$ defined for lower-dimensional cones to produce the function $g_\rho.$
\end{proof}

We recall below in Theorem \ref{thm: toric mirror symmetry} the role of $\LL(\Sigma)$ in homological mirror symmetry \cite{FLTZ2, Ku, GS17},
and in Example \ref{ex: very affine} its corresponding appearance as the relative skeleton of the Liouville sector associated to a  Hori-Vafa superpotential
\cite{GS17, Zhou-skel}.   $\LL(\Sigma)$ also arises directly from considerations around SYZ mirror symmetry; see \cite{FLTZ1}.

\subsection{Proof of Theorem \ref{thm: main construction}} \label{sec:lvl-defn} 
Now we have the ingredients we need to prove Theorem \ref{thm: main construction}. The proof will proceed exactly as described in Remark \ref{rem: handle attachment fanifold}: we begin with the fanifold $\Phi_0$ of (neighborhoods of) vertices in $\Phi,$ which will contribute to $\bW(\Phi)$ a disjoint union of cotangent bundles of tori, equipped with FLTZ Lagrangians; the edges in $\Phi$ will specify pieces of these FLTZ Lagrangians corresponding to 1-dimensional cones, which we will glue together via handle attachment, and then we will extend the remaining pieces of the FLTZ Lagrangians across the handle attachment by biconicity. The same procedure is followed for at the next step for the 2-dimensional cones, and so on.

This construction is best summarized in Figure \ref{fig:2d-leg-gluing} below.

\begin{proof}[Proof of Theorem \ref{thm: main construction}] \label{sec: proof} 
The fanifold $\Phi$ has a filtration $\Phi_0 \subset \Phi_1 \subset \cdots \subset \Phi_n = \Phi$ by fanifolds $\Phi_k$ defined as neighborhoods
of the $k$-skeleta $\mathrm{Sk}_k(\Phi)$; we will prove the theorem inductively over the fanifolds $\Phi_k$.

At the beginning of stage $k$, we will already have $\bW(\Phi_{k-1})$ and $\pi: \LL(\Phi_{k-1}) \to \Phi_{k-1}$ satisfying the conditions
of the theorem.  We will then have to construct $\bW(\Phi_{k})$ and $\pi: \LL(\Phi_{k}) \to \Phi_{k}$.  As we have noted in Remark \ref{rem: handle attachment fanifold},
$\Phi_{k}$ is constructed from $\Phi_{k-1}$ by handle attachment.  We will lift this fanifold handle attachment to a Weinstein handle
attachment on $\bW(\Phi_{k-1})$ to form $\bW(\Phi_k),$ and we will then extend $\LL_{k-1}$ through the newly attached handle to form $\LL_{k}$. 

At stage ${k}$, for each interior $k$-stratum $F \subset \Phi$, there is a smooth closed Legendrian 
\begin{equation} 
\cL_F := \pi^{-1}(F) \cap \partial_\infty \LL(\Phi_{k-1}).
\end{equation}
We write $F_\circ := F \setminus \pi(\Phi_{k-1})$.  This $F_\circ$ is a manifold-with-boundary, where the boundary 
is the portion of the ideal boundary of $F$ which is in the interior of $\Phi$.  (That is, $\partial F_\circ$ is 
what we called $\partial_{in} F$ in Remark \ref{rem: handle attachment fanifold}.)  

The local description of $\pi$ ensures that $\cL_F \cong \partial F_\circ \times \widehat{M}_F$, where
$M_F$ is the rank $(n-k)$ lattice associated to the
stratum $F$, and $\widehat{M}_F$ is the corresponding Pontrjagin dual $(n-k)$-torus.  
 Thus we may attach a handle  $\rT^* F_\circ \times \rT^* \widehat{M}_F$. 

We will show below that 
show that $\cL_F$ admits local coordinates in which $\LL(\Phi_{k-1})$ is locally biconic and in fact 
splits locally as a product $ \partial F_\circ \times  \LL_F \subset \rT^* \partial F_\circ \times \rT^* \widehat{M}_F$.   
Having done so, we may extend $\LL(\Phi_{k-1})$ through this handle. 

We do this for all k-strata, so in total our handle attaching locus is 
$$ \cL_{k} := \partial  (\pi_k^{-1}(\mathrm{Sk}_{k} \Phi)) =  \coprod_{\substack{\mathrm{interior}\, F \\ \dim(F) = k}} \partial_\infty \LL \cap  \pi^{-1}(F), $$
where the union is taken over the $k$-strata $F$ of $\Phi$.  

We define $\pi$ on the handle $\rT^* F_\circ \times \rT^* \widehat{M}_F$ as the product of projections to $F_\circ$ and to 
the cotangent fibers of $\rT^*\widehat{M}_F$.  
The compatibility condition on fan structures ensures that the restriction of this projection from $\LL(\Phi_k)$ to $\LL(\Phi_{k-1})$ agrees with the projection already defined there.

Finally, we must return to the point we postponed above: the demonstration that at each stage, the $\cL_F$ are as advertised and 
that $\LL(\Phi_{k-1})$ have the appropriate properties along them.  
For expository reasons we give steps 1 and 2 
explicitly, although they are special cases of the general procedure at step $k$.  (By step 2 one sees essentially the full
complexity of the construction.)  It may be helpful to read 
these steps while referring to Example \ref{ex:skel-basic-2d} and Figure \ref{fig:2d-leg-gluing} below.

\vspace{2mm}

{\bf Step 0}:  
In the case $\Phi = \Sigma$ is simply a rational polyhedral fan for the lattice $M \subset M \otimes \R$, 
we define $\bW(\Sigma) = \rT^* \widehat{M}$ and take $\LL(\Sigma)$ to be the FLTZ Lagrangian. 
The map $\pi$ is just the projection to cotangent fibers. 

In a general fanifold, $\Phi_0$ is isomorphic to a disjoint union of (disk neighborhoods of the origin in) such fans.  We define 
$\bW(\Phi_0), \LL(\Phi_0)$ by the corresponding disjoint unions, and similarly $\pi: \LL(\Phi_0) \to \Phi_0$. 

\vspace{2mm} 
{\bf Step 1}: 
$\cL_1$ is a disjoint union of  Legendrians $\cL_F$, indexed by interior 1-strata $F \subset \Phi_1$.
Given such a stratum $F$, let $F', F''$ be the 0-strata in its closure, and let 
$\sigma' \subset \Sigma'$ and $\sigma'' \subset \Sigma''$ be the rays associated to $F$ in the respective fans of 
$F'$ and $F''$.  The corresponding components of $\cL_F$ are the 
Legendrians $\partial_\infty \LL_{\sigma'}$ and $\partial_\infty \LL_{\sigma''}$ 
in the respective cosphere bundles $\partial_\infty \rT^*\widehat{M}_{F'}$ and $\partial_\infty \rT^* \widehat{M}_{F''}$.    
As the boundary $\partial F_\circ$ consists of 0, 1, or 2 points, there is an evident diffeomorphism $\cL_F \cong \partial F_\circ \times \widehat{M}_F$.  
Note that since $F$ is a 1-stratum, $\widehat{M}_F$ is an $(n-1)$-torus. 

Recall that we define $\bW(\Phi_1)$ by the gluing
$$\bW(\Phi_1) := \bW(\Phi_0) \#_{\cL_1} \coprod_{\substack{\mathrm{interior}\, F \\ \dim(F) = 1}} \rT^*F \times \rT^* \widehat{M}_F. $$ 

We now check the properties of $\cL_1$ which allow us to extend $\LL(\Phi_0)$ through the handle.  
In Lemma \ref{lem:fltz-inductive}, we identified standard coordinates near $\cL_F$ for which $\LL(\Sigma')$ and $\LL(\Sigma'')$ are biconic. 
We should verify that $\LL(\Phi_0)$  locally factors respecting the product structure 
$\rT^* \widehat{M}_F \times \partial F_\circ$.  There is only something to check in case $\partial F_\circ$ is two points, over which lie 
$\LL(\Sigma'/\sigma')$ and $\LL(\Sigma''/\sigma''),$ respectively.  Thus, the
local factorization follows from the fan compatibilities 
$$\Sigma'/\sigma' = \Sigma_F = \Sigma''/\sigma''$$
required in the definition of a fanifold. 

We conclude that $\LL(\Phi_0)$ extends through the handle, and we define $\LL(\Phi_1)$ 
to be this extension.   

\vspace{2mm} 

{\bf Step 2}. 
$\cL_2$ is a disjoint union of  Legendrians $\cL_F$, indexed by interior 2-strata $F \subset \Phi_2$.
To each exit path $H \to F$ of 0-2 strata, we obtain a two-dimensional cone $\sigma_{H \to F} \subset \Sigma_{H}$
and corresponding Legendrian  $\partial_\infty \LL_{H \to F}$ in $\partial_\infty \rT^* \widehat{M}_H$. 

The collection $\bigsqcup_{H, F} \partial_\infty \LL_{H \to F}$ is a smooth Legendrian-with-boundary  in $\partial_\infty \bW(\Phi_0)$, and
we must show that it extends through the handles we attached in forming $\bW(\Phi_1)$.  
As it is a subset of $\LL(\Phi_0)$, we need only check that it respects the local factorization
already established for $\LL(\Phi_0)$.  

To see this, consider now a flag $G \to F$ of 1-2 strata.  This flag specifies a ray $\sigma_{G \to F} \subset \Sigma_G$.  
If $H \to G \to F$ and $H' \to G \to F$ are two flags of 0-1-2 strata extending $G \to F$,  then the fanifold compatibility 
conditions require the fan isomorphisms
$$\Sigma_H / \sigma_{H \to G} = \Sigma_G = \Sigma_{H'} / \sigma_{H' \to G}$$
to identify the images of the corresponding cones
$$\sigma_{H \to F} \mapsto \sigma_{G \to F} \mapsfrom \sigma_{H' \to F}.$$
This shows that indeed $\cL_F^{pre} := \bigsqcup_{H, F} \partial_\infty \LL_{H \to F}$ factors as advertised, and it is straightforward
to see that its extension through the handles is indeed $\cL_F$.  

Lemma \ref{lem:fltz-inductive} gives the biconicity and local product structure of $\LL(\Phi_0)$ along 
$\cL_F^{pre}$. Note now that in Lemma \ref{lem:fltz-inductive}, these structures were constructed inductively on the dimension of cone $\sigma,$ so that near the region of the previous handle attachment, the biconic coordinates for $\partial_\infty\LL_{H\to F}$ and $\partial_\infty \LL_{H'\to F}$ will agree. Therefore this entire structure extends through the handles to give the corresponding structures for 
$\LL(\Phi_1)$ along $\cL_F$.

\vspace{2mm}

{\bf Step $\mathbf{1} \le \mathbf{k} \le \mathbf{n}$}: 
We have already constructed $\bW(\Phi_{k-1})$, $\LL(\Phi_{k-1})$, and $\pi: \LL(\Phi_{k-1}) \to \Phi_{k-1}$.  
Our task is to study $\cL_{k}$, which is by definition the disjoint union over interior $k$-strata of 
$\cL_F := \pi^{-1}(F) \cap \partial (\LL(\Phi_{k-1}))$.  It suffices to study each $\cL_F$ independently, so we fix 
some interior $k$-stratum $F$. 

As in Step 2, the Legendrian $\cL_F$ can also be described by beginning with all exit paths
$H \to F$ from $0$-strata to $F$,  considering the Legendrians $\partial_\infty \LL_{H \to F}$ corresponding to $k$-cones
$\sigma_{H \to F} \in \Sigma_H$, and iteratively extending these through all handle attachments associated to flags
of strata ending in $F$.  Existence of these extensions follows as in Step 2 from compatibility of fans, and 
the result is readily seen to be $\cL_F$.    

Local biconicity of $\LL(\Phi_k)$ along $\cL_F$ and the factorization (locally near $\LL_F$) of $\LL(\Phi_k)$ as
$$\LL(\Phi_k) = F_\circ \times \LL({\Sigma_F})   \subset  \rT^* F_\circ \times \rT^* \widehat{M}_F$$
follow by extending through the handles the corresponding facts 
(proven in Lemma \ref{lem:fltz-inductive})
for the original FLTZ Legendrians
$\partial_\infty \LL_{H \to F}.$  
Once again, the inductive construction of Lemma \ref{lem:fltz-inductive} ensures that these structures agree near previously attached handles, so that we may extend them over the handles.
\end{proof}

\subsection{Examples} 
We now give some examples of the Weinstein manifold $\bW(\Phi)$ constructed by Theorem \ref{thm: main construction} for some interesting fanifolds $\Phi.$

\begin{example}
For any fan $\Sigma$, the space $\bW(\Sigma)$ is the cotangent bundle of a torus, and $\LL(\Sigma)$ is the FLTZ Lagrangian
for $\Sigma$.  An inclusion $\Sigma' \subset \Sigma$ corresponds to an inclusion $\LL(\Sigma') \subset \LL(\Sigma)$ of 
FLTZ Lagrangians. 
\end{example}

\begin{example} \label{ex:skel-basic-1d}
  As in Example \ref{ex: necklace}, consider the fanifold $\Phi$ associated to the stratification of $S^1$  into $r$ intervals and $r$ points. 
  Let us step through the construction of the corresponding $\bW(\Phi)$.  At step zero, we associate to each point the cotangent bundle 
  of a circle, $\rT^* \widehat{\ZZ}$;  the $\bW(\Phi_0)$ will be the disjoint union of these.  (We use the $\widehat{\ZZ}$ in part for consistency
  with the above, and in part to distinguish this circle from the circle $\Phi = S^1$.) 
  Inside the $\rT^*  \widehat{\ZZ}$ we have the FLTZ skeleton mirror to $\PP^1$; 
  the union of these is the $\LL(\Phi_0)$.   The 
  Legendrian at infinity is the positive and negative conormals over $0 \in T^1$.  The union of all of these gives the $\cL_1$.  
  At step one, we attach 1-handles, attaching the positive conormal point of one $T^* T^1$ to the negative conormal point of the next.  
  The procedure terminates here.  Note $\bW(\Phi)$ is the Weinstein manifold obtained from a compact 2-torus by deleting $r$ points. 
  This space is well known to be mirror to the necklace of $\PP^1$s which is $\bT(\Phi)$. 
\end{example} 

\begin{figure}[h]
  \begin{center}
    \includegraphics[width=3in]{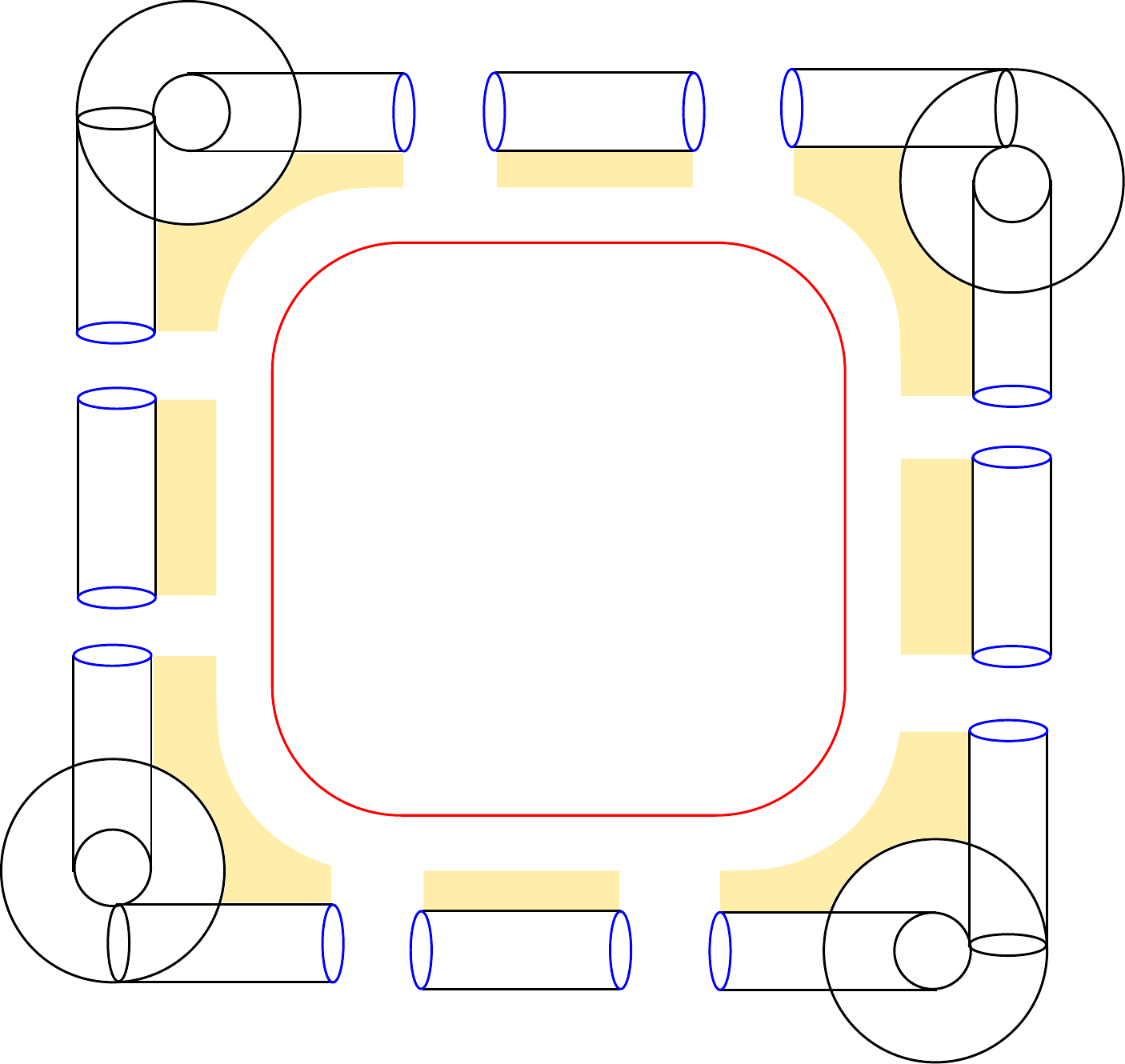}
  \end{center}
  \caption{For $\Phi = [0,1] \times [0,1]$, the space $\LL(\Phi_0)$ consists of the four depicted corner components.  Each 
  consists of the union, in the cotangent bundle to a 2-torus, of the zero section, the positive conormals to the longitude and meridian, and 
  a quadrant of the conormal to the intersection point of the longitude and meridian (this last drawn in yellow).  
  The blue locus in their boundaries is $\cL_1$.  Extending $\LL(\Phi_0)$ through the corresponding handles gives rise 
  to the edge components; attaching all these together gives $\LL(\Phi_1)$.  The red locus indicates the Legendrian 
  $\cL_2$, along which we will attach a 2-disk in the final step.}
  \label{fig:2d-leg-gluing}
\end{figure}

\begin{example}\label{ex:skel-basic-2d}
  Consider $[0,1]\times [0,1]$ with stratification by interior, boundary edges, and boundary vertices.  The normal geometry to each vertex is naturally
  identified with a fan for $\AA^2$ placed at each vertex. (The fan of $\AA^2$ spans a quadrant of $\RR^2,$ and to each vertex we associate a fan spanning the appropriate quadrant.) 
  The normal geometry to each edge is naturally identified with the fan of $\AA^1$. 
  The Lagrangian skeleton of the resulting Weinstein manifold 
  is obtained from the gluing depicted in Figure~\ref{fig:2d-leg-gluing} after attaching a 2-disk along the red Legendrian.  
\end{example}

\begin{example} \label{ex: very affine} 
Consider a fan $\Sigma \subset \RR^{n+1}$.  Assume the fan is simplicial, and that 
the primitive generators of each ray lie on the boundary of some fixed convex polytope $\Delta^\vee$.  Consider 
the fanifold $\Phi := \Sigma \cap S^n$ as in Example \ref{ex:bdry-normalfan}.  
The calculations of \cite{GS17, Zhou-skel} can be interpreted as showing that in this case  $\bW(\Phi)$ 
is (a tailoring of) a generic hypersurface  $H\subset (\CC^*)^{n+1}$ with Newton polytope $\Delta^\vee$. 
\end{example} 

\begin{example}\label{ex:3A1}
Let $\Sigma\subset \RR^3$ be the standard fan of cones for the toric variety $\AA^3,$ and let $\mathring{\Sigma}$ be the fan obtained from $\Sigma$ by deleting the rays. The fan $\mathring{\Sigma}$ has a fanifold structure inherited by the fanifold $\Phi:=\mathring{\Sigma}\cap S^2,$ which is a ``2-simplex without its vertices.''
The Lagrangian $\LL(\Phi)$ is a union of three cylinders and a 2-simplex (with vertices removed) with each edge glued to one of the three cylinders, as illustrated in Figure~\ref{fig:3P1s}.

\begin{figure}[h]
  \begin{center}
    \includegraphics[width=3in]{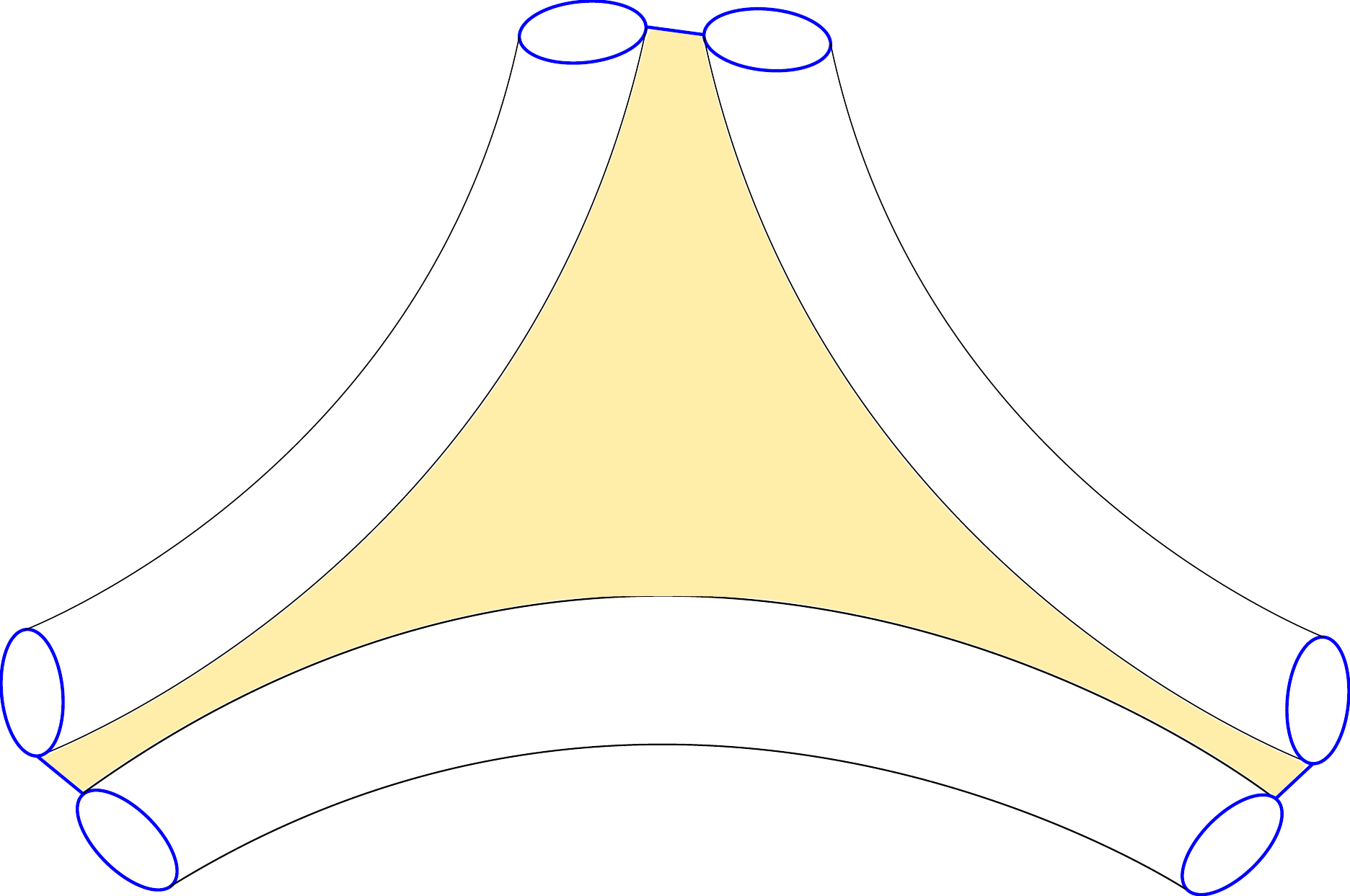}
  \end{center}
  \caption{The Lagrangian skeleton $\LL(\Phi)$ 
  mirror to the coordinate axes in $\AA^3$.  
  The boundary skeleton $\LL(\partial_\infty \Phi) = \partial_\infty \LL(\Phi)$ is depicted in blue; it is mirror to three disjoint copies
  of the coordinate axes in $\AA^2$.}
  \label{fig:3P1s}
\end{figure}

We can see that the fanifold $\Phi$ has three 1-strata, each one equipped with the fan of $\AA^1,$ and one 2-stratum shared among them, so that the large-complex-structure variety $\bT(\Phi)$ determined by $\Phi$ is three copies of $\AA^1$ meeting at a point --- the union of the coordinate axes in $\AA^3.$ This example shows that by deleting strata from a fanifold $\Phi,$ we can produce $n$-dimensional B-side varieties whose singularities are more complicated than those occurring in $n$-dimensional toric geometry.
\end{example}

\subsection{Microsheaves} \label{sec: microsheaves}
Having constructed $\bW(\Phi)$ and $\LL(\Phi)$, and noting that the polarization gives rise to the necessary
Maslov data to define the Fukaya category (as described for instance at \cite[Sec. 5.3]{GPS3}), we write $\Fuk(\bW(\Phi), \partial \LL(\Phi))$ for
the category of modules over what would in \cite{GPS2} be called the (partially) wrapped Fukaya category.  (That is, we take 
the presentable DG category associated to the usual Fukaya category.)

Recent work on localization of Fukaya categories \cite{GPS1,  shende-microlocal, GPS2, GPS3, NS20} ends in an equivalence 
 \cite[Thm. 1.4]{GPS3} between Fukaya categories of this sort and the global sections of a certain constructible sheaf of categories
 obtained from microlocal sheaf theory: 
$$\Gamma(\LL(\Phi), \mu sh_{\LL(\Phi)})^{op} \cong \Fuk(\bW(\Phi), \partial \LL(\Phi)).$$
Here, the $\mu sh_{\LL(\Phi)}$ is a constructible sheaf of categories on $\LL(\Phi)$
valued in ${}^* \mathrm{DG}^*$.  Taking opposite category is an artifact from various conventions and can be absorbed into, e.g., 
negating the symplectic form.  In the cotangent bundle setting, one can absorb it into negating $\LL(\Sigma)$, and indeed it is the Lagrangian
$-\LL(\Sigma)$ which appears in \cite{FLTZ1,FLTZ2, Ku} --- although $\LL(\Sigma)$ itself appears in \cite{Nadler-wrapped, GS17} where the actual relative skeleton for a toric mirror is computed.

We may compute global sections after first taking the pushforward $\pi_* \mu sh_{\LL(\Phi)}$, which is a constructible
sheaf of categories over $\Phi$ itself; it is in terms of this pushforward that we later formulate our mirror symmetry results. 

The remainder of the present subsection is dedicated to explaining how we may compute the pushforward sheaf of categories $\pi_*\mu sh_{\LL(\Phi)}$ in practice.
Our previous work \cite{GS17} can be understood
as a computation of the restriction of $\pi_* \mu sh_{\LL(\Phi)}$ to the neighborhood of a stratum.  By itself, this seems insufficient to determine
the sheaf $\pi_* \mu sh_{\LL(\Phi)}$, since such a determination would also require knowledge of the gluing isomorphisms on overlaps.  However,
we are in possession of the pleasant fact that 
the charts in Theorem \ref{thm: main construction}
are all cotangent bundles with their canonical polarizations. 
Below, we will explain how this fact allows us to reconstruct the desired gluing isomorphisms.

We first recall the basic definitions of the microsheaf theory \cite{Kashiwara-Schapira} and its 
globalization \cite{shende-microlocal, NS20}. 

Let $M$ be a differentiable manifold and 
consider the category $\Sh(M)$ of sheaves on $M$ valued in 
some fixed symmetric monoidal presentable DG category, which we may as well take to be $\Mod(k)$.   To  $F \in \Sh(M)$, 
there is a conical locus $ss(F) \subset \rT^*M$ of codirections along which the local space of sections of $F$ is not constant. 
The textbook reference is \cite{Kashiwara-Schapira}; see any recent article (for instance \cite{NS20}) for 
comments on updates to the homological algebra foundations.  

Because $ss(F)$ interacts well with sums, products, and cones, it is natural
to consider for any given conic $\Lambda \subset \rT^*M$ the category $\Sh_\Lambda(M)$ of sheaves whose microsupport is
contained in $\Lambda$.  We are typically interested in $\Lambda$ which are subanalytic and the closure
of their smooth Lagrangian points; we term such a $\Lambda$ {\em singular Lagrangian}, or just Lagrangian.  
Any smooth Lagrangian point of $\Lambda$ determines a ``microstalk functor'' $\Sh_\Lambda(M) \to \Mod(k)$, or more precisely a
family of such functors parameterized by some topological data at the point and differing by tensor product
with invertible objects of $\Mod(k)$.  (If there is a Lagrangian disk in $M$ tranverse to this point of $\Lambda$, then it 
is carried to a corepresentative of this functor by \cite{GPS3}.) 

It is a deep result that for any sheaf $F$, its singular support $ss(F)$ is coisotropic \cite[Theorem 6.5.4]{Kashiwara-Schapira}. 
One useful application of this fact is the following:
if one knows in advance that $ss(F)$ is contained in some conic (singular) Lagrangian $\Lambda$ 
but is in fact disjoint from the smooth locus of $\Lambda$, then $ss(F)$ is empty.  It follows that the microstalk functors
at smooth points of $\Lambda$ generate  $\Sh_\Lambda(M)$ when $\Lambda$ is singular Lagrangian. 

In fact, $\Sh_\Lambda(M)$ is the global sections of a sheaf of categories $\mu Sh$
over $\Lambda$: 
\begin{definition}\label{defn:mush1}
For $\Lambda$ a conic Lagrangian, the sheaf of categories $\mu Sh_\Lambda$ is defined as the 
sheafification of the following presheaf of categories on $\rT^*M$:
\begin{equation} \mu Sh^{pre}_\Lambda(U) := \Sh_{\Lambda \cup (\rT^*M \setminus U)} (M) /  \Sh_{\rT^*M \setminus U} (M). \end{equation} 
(Here we intentionally write $\mu Sh$ rather than $\mu sh$ to distinguish between this and a different construction which we will recall below.)
Evidently $\mu Sh_\Lambda$ is conic and is the pushforward of a sheaf supported on $\Lambda$, which we denote also $\mu Sh_\Lambda$.  
By restriction away from
the zero section, we obtain a sheaf of categories on the Legendrian $\partial_\infty\Lambda\subset \partial_\infty \rT^*M$.
\end{definition}

Now let $\Lambda$ be an arbitrary Legendrian, carrying
the germ of a contact manifold $U$ in which it is embedded as a Legendrian.   The basic innovation of \cite{shende-microlocal}
was to consider positive codimension 
embeddings $U \hookrightarrow \partial_\infty \rT^*M$.  Such an embedding realizes $\Lambda$ as a subcritical isotropic, so that
coisotropicity of microsupports implies that the microsheaf category
$\mu Sh_\Lambda$ is actually 0.
However, we can remedy this by thickening $\Lambda.$
\begin{definition}\label{defn:mush2}
  Let $\widetilde{\Lambda} \subset \partial_\infty \rT^*M$ be a Legendrian obtained by thickening $\Lambda$ along a choice of stable polarization of the symplectic normal bundle of $U$. Then we define a sheaf of categories $\mu sh_{\Lambda}$ on $\Lambda$ by restriction of the sheaf $\mu Sh_{\tilde{\Lambda}}$ defined above:
\[ \mu sh_{\Lambda} := \mu Sh_{\tilde{\Lambda}}|_{\Lambda}.
\]
\end{definition}

There is a canonical stabilization functor $(\Lambda \subset U) \mapsto (\Lambda \times \RR \subset U \times \rT^* \RR)$, 
and a canonical isomorphism $\mu sh_{\Lambda} \cong \mu sh_{\Lambda \times \RR}|_{\Lambda \times 0}$, 
induced from the
canonical isomorphism $Sh_{\rT_\RR^* \RR}(\RR) \cong \mathrm{Mod}(k)$. 
By Gromov's h-principle, the space of all such embeddings in $\RR^{2n+1}$  as $n \to \infty$ is (nonempty and) arbitrarily connected.  
By contact invariance of microsheaves, one sees therefore that $\mu sh_{\Lambda}$ depends only on the stable normal
polarization (in the sense that the space of further choices is contractible). 

\begin{remark} While homotopic choices of stable normal polarization give equivalent 
(sheaves of) categories $\mu sh_\Lambda$, the space of such choices is {\em not} contractible; one can formulate this 
universally in terms of the existence of a (canonical up to contractible choice) 
sheaf of categories over the Lagrangian Grassmannian  bundle of the stable normal bundle, 
locally constant in the bundle direction.  In fact, this sheaf descends from the Lagrangian
Grassmannian to a principal $\mathrm{BPic}(\Mod(k))$ bundle \cite[Sec. 10]{NS20}.  Thus, the true requirements for
defining $\mu sh_{\Lambda}$ are a trivialization of this bundle: when $k = \ZZ$, this can be seen to be the same 
topological data as is usually required to define gradings and orientations for the Fukaya category.
(See for instance \cite[Sec. 5.3]{GPS3} for details.) 
We will not need this descent here, as we will have a natural choice of polarization available to us. 
\end{remark} 

Let us also recall that a Lagrangian polarization of the tangent bundle of a symplectic manifold (or of the contact distribution
of a contact manifold) defines a stable normal polarization by asking that in some cosphere bundle embedding, the given
tangent polarization is contained in the ambient cosphere polarization, with the quotient defining the normal polarization; such an
embedding  exists by h-principle considerations. 

That is, to compute $\mu sh_{\LL(\Phi)}$ directly 
from the definition involves finding an embedding of $\bW(\Phi)$ as a (possibly high-codimensional)
Liouville hypersurface of $\RR^{2n+1}$ and then studying the front projection of $\LL(\Phi)$ to $\RR^{n+1}$.  
While the h-principle guarantees that it is possible to find such an embedding, it is not clear how
one would do so in practice.  Instead, we will 
take advantage of the fact (Theorem \ref{thm: main construction})  that $\bW(\Phi)$ is covered by cotangent bundle charts, 
locally with respect to which $\LL(\Phi)$ is  conical. 

At this point, a conical Lagrangian $\Lambda\subset \rT^*M,$ carries two sheaves of categories:
\begin{itemize}
  \item Definition \ref{defn:mush1} defines the sheaf of categories $\mu Sh_\Lambda$ on the conical Lagrangian $\Lambda.$
  \item The Lagrangian $\Lambda$ admits a natural embedding as a Legendrian in the contactization $\rT^*M\times \RR.$ The fiber polarization on $\rT^*M$ determines a polarization on the contazctization, so that Definition \ref{defn:mush2} determines a canonical (up to contractible choices) sheaf of categories $\mu sh_\Lambda$.
\end{itemize}

Our definitions so far do {\em not} determine an identification of these sheaves of categories.  We now fix such a choice.
Consider the diagram: 
$$M \xleftarrow{\pi} M \times (0, \infty) \xrightarrow{j} M \times \RR$$
and the map $j_* \pi^*: Sh(M) \to Sh(M \times \RR)$.   Note that $\partial_\infty ss(j_* \pi^* \ZZ)$ is the positive (in the $\RR$ direction) conormal 
$\widetilde{M} := \partial_\infty T^+ (M \times 0)$ to $M.$ Projection to the base gives an identification $\widetilde{M} \xrightarrow{\sim} M \times 0$.   
In fact, there are standard coordinates $\eta: Nbd(\widetilde{M}) \to J^1 M = \rT^*M \times \R$ 
such that for any sheaf $F$ on $M$, there is a local factorization
$$\eta(\partial_\infty ss(j_* \pi^* F)) =   ss(F) \times 0 \subset \rT^*M \times \R.$$ 
Given a conic Lagrangian $\Lambda \subset T^*M,$ which we may also consider as a Legendrian $\Lambda \times 0  \subset  \rT^* M \times \RR$,  
the map $j_* \pi^*$ induces an equivalence
$\eta^* \mu Sh_\Lambda \cong  \mu sh_{\eta^{-1}( \Lambda)}$ 
 of sheaves of categories on $\Lambda$.
Note that the restriction $\eta^* \mu Sh_{\partial_\infty \Lambda} \cong  \mu sh_{\eta^{-1}(\partial_\infty \Lambda)}$ 
of this isomorphism to the boundary of $\Lambda$
agrees with the (previously chosen) stabilization isomorphism used in the definition of $\mu sh$, because near the boundary $\partial_\infty \Lambda,$ the relation between $\Lambda$ and $\Lambda\times 0$ is precisely 
the standard stabilization.

More generally, for a vector space $V$ and open strictly convex cone $\iota: C \hookrightarrow V$,  we may  
consider the analogous diagram
\begin{equation}\label{eq:diagram-fixediso}
M \xleftarrow{\pi} M \times C  \xrightarrow{j} M \times V.
\end{equation}

Writing $C^\vee$ for the dual cone inside the cotangent fiber $T_0^* V$ , there are standard coordinates 
$$\partial_\infty \rT^*(M \times V) \supset Nbd(\partial_\infty (M \times C^\vee)) \xrightarrow{\eta}   \rT^* M \times  \rT^* \partial_\infty C^\vee \times \RR$$
such that for any sheaf $F$ on $M$, we have a local factorization of singular supports
$$\eta(\partial_\infty ss(j_* \pi^* F)) =   ss(F) \times \partial_\infty C^\vee \times 0 \subset \rT^* M \times  \rT^* \partial_\infty C^\vee \times \RR.$$ 
Consider the conic Lagrangian $\Lambda \times \partial_\infty C^\vee \subset \rT^*M\times \rT^* \partial_\infty C^\vee$. In the coordinates $\eta,$ we have an equivalence
of sheaves of categories
\begin{equation}\label{eq:choice-iso1}
  \eta^* \mu Sh_{\Lambda \times \partial_\infty C^\vee} \cong  \mu sh_{\eta^{-1}( \Lambda \times \partial_\infty C^\vee)}.
\end{equation}
The $C^\vee$ factor is contractible and is the zero section of its cotangent bundle,
so that we have a canonical isomorphism.  
\[p^* \mu Sh_{\Lambda}  \cong  \mu Sh_{\Lambda \times \partial_\infty C^\vee},\] 
where $p: \Lambda \times \partial_\infty C^\vee \to \Lambda$ is the projection,
and likewise we have
a canonical isomorphism 
\[\mu Sh_{\Lambda}  \cong  p_* \mu Sh_{\Lambda \times \partial_\infty C^\vee}.\] 
Combining these with the equivalence \eqref{eq:choice-iso1} we chose above, we have therefore produced equivalences
\begin{equation} \label{eq: cotangent charts} \eta^*  p^* \mu Sh_{\Lambda} \cong  \mu sh_{\eta^{-1}( \Lambda \times \partial_\infty C^\vee)}, \qquad \qquad 
\mu Sh_{\Lambda} \cong p_* \eta_* \mu sh_{\eta^{-1}( \Lambda \times \partial_\infty C^\vee)}. \end{equation}

The significance of Equation (\ref{eq: cotangent charts}) is that the LHS is computed in some specific cotangent bundle, 
while the RHS {\em depends (up to contractible choice) only upon the germ of stable contact embedding and normal polarization}.  
Whenever in any contact manifold we find  $\Lambda \times C^\vee$ with some chart 
$\eta: Nbd(\Lambda \times C^\vee) \to \rT^*M \times \rT^* C^\vee \times \RR$, and the fixed normal polarization restricts to the standard normal polarization in this chart
(as is the case for instance if we define the normal polarization by a tangent polarization restricting in this chart to the standard normal polarization ---
such a polarization entails the base and fiber polarizations $\rT^* C^\vee$ and $\rT^*M,$ respectively), then we obtain fixed isomorphisms as in 
Equation (\ref{eq: cotangent charts}).  This will be the key tool in our computation of global microsheaf categories.

\begin{remark} \label{rem: choice} 
The equivalence \eqref{eq:choice-iso1}, and therefore also the equivalences of \eqref{eq: cotangent charts}, are not canonical, in the sense that 
they depend on the choice we made to produce them through the diagram \eqref{eq:diagram-fixediso}. Nevertheless, we fix this choice once 
and for all, so that from here on, we do have a fixed way of identifying these sheaves of categories.

The reason for our particular choice is the following. 
Recall from \cite[Chap. 3.7]{Kashiwara-Schapira} 
the Fourier-Sato transform 
\[\mathfrak{F}: Sh(M \times V) \to Sh(M \times V^\vee),\] 
defined as the integral transform with kernel given by 
the polar locus $\{(v, v^\vee) \ge 0\}$.   Fix a point $c^\vee \in C^\vee$.  Then for any $F \in Sh(M)$, there is a canonical
isomorphism $F \cong \mathfrak{F}(j_* \pi^* F)|_{M \times c^\vee}$.  That is, $|_{M \times c^\vee} \circ \mathfrak{F}$ is a left 
inverse to $j_* \pi^*$. 

Our choice is designed to match the corresponding choice in \cite[Definition 4.3.1]{Kashiwara-Schapira}, so that Lemma \ref{lem: sato}
below takes its stated form. 
\end{remark}

Suppose now that we have a submanifold $M \subset N$, let us assume with trivial normal bundle
$\rT^*_M N = \rT^*_m N \times M$.  Suppose we are given some conic Lagrangian $\LL_N \subset \rT^* N$, 
which, when restricted to an appropriate choice of tubular neighborhood for $M$, is also conic
for the scaling action on the tubular neighborhood.  Lemma \ref{lem: polar} therefore ensures that $\LL_N$ is
biconic along the Legendrian $\rT^*_M N$ in the sense of Definition \ref{def: biconic}.  Assume in
addition that for some open cone $C^\vee \times M \subset \rT^*_m N \times M$, there is a chart
$\eta: Nbd(\partial_\infty C^\vee \times M) \hookrightarrow \rT^* \partial_\infty C^\vee \times \rT^* M \times \RR$
such that $\eta(\partial_\infty \LL_N) = C^\vee \times \LL_M \times 0$. 

Then the previous discussion determines an isomorphism
$$p_* \eta_* \mu sh_{\LL_N}|_{Nbd(\partial_\infty C^\vee \times M)} \xrightarrow{\sim} \mu sh_{\LL_M}$$
and, passing to global sections, a particular morphism
$$Sh_{\LL_N}(N) = \Gamma(\LL_N, \mu sh_{\LL_N}) \to 
\Gamma(Nbd(\partial_\infty C^\vee \times M), \mu sh_{\LL_N}) 
= \Gamma(\LL_M, \mu sh_{\LL_M} ) = Sh_{\LL_M}(M).$$
It is an exercise to show: 
\begin{lemma} \label{lem: sato} 
This morphism $Sh_{\LL_N}(N) \to Sh_{\LL_M}(M)$ is naturally isomorphic to composition of the Sato microlocalization along $M$ (as defined in \cite[Chap. 4.3]{Kashiwara-Schapira})
with the restriction to $c^\vee \times M,$ for any $c^\vee \in C^\vee$. 
\end{lemma}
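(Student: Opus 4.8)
The plan is to reduce the statement to a purely local computation in the cotangent-bundle chart and then match it term-by-term with the definition of Sato microlocalization from \cite[Chap.~4.3]{Kashiwara-Schapira}. First I would unwind the left-hand map: by Definition~\ref{defn:mush1}, global sections of $\mu Sh_{\LL_N}$ over $\LL_N$ recover $Sh_{\LL_N}(N)$, and the restriction map to global sections over the subset $Nbd(\partial_\infty C^\vee \times M)$ is, by construction of $\mu Sh$ as a sheaf of quotient categories, the composite of restriction of microsupport followed by the quotient functor $Sh_{\LL_N \cup (\rT^*N \setminus U)}(N) \to Sh_{\LL_N \cup (\rT^*N\setminus U)}(N)/Sh_{\rT^*N \setminus U}(N)$ for $U = Nbd(\partial_\infty C^\vee \times M)$; biconicity of $\LL_N$ along $\rT^*_M N$ (Lemma~\ref{lem: polar}, as invoked just above the statement) guarantees this restriction lands in the subsheaf generated by the product Legendrian $C^\vee \times \LL_M \times 0$. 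The chosen identification \eqref{eq: cotangent charts} then rewrites $\Gamma(Nbd(\partial_\infty C^\vee \times M), \mu sh_{\LL_N})$ as $\Gamma(\LL_M, \mu sh_{\LL_M}) = Sh_{\LL_M}(M)$.

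Next I would spell out what the composite actually does on a sheaf $F \in Sh_{\LL_N}(N)$. The key point is that the standard coordinates $\eta$ from \eqref{eq:diagram-fixediso}, with $V = \rT_m^* N$ and $C^\vee$ the chosen cone, were set up precisely so that $j_* \pi^*$ realizes the stabilization/specialization: restricting a sheaf's microlocal behavior near $\partial_\infty C^\vee \times M$ in these coordinates corresponds to first taking the specialization (deformation to the normal cone) $\nu_M(F)$ along $M$, which is monodromic on $\rT_M N$, and then applying the Fourier--Sato transform to land on $\rT_M^* N$ — this composite $\mathfrak{F} \circ \nu_M$ is exactly $\mu_M(F)$, the Sato microlocalization, by \cite[Def.~4.3.1]{Kashiwara-Schapira}. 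The restriction to $c^\vee \times M$ picks out the stalk of $\mu_M(F)$ at a point of the open cone $C^\vee$; Remark~\ref{rem: choice} explains that our normalization of the identification \eqref{eq:choice-iso1} was chosen so that it matches $|_{M \times c^\vee}\circ \mathfrak{F}$ as a left inverse to $j_* \pi^*$, which is precisely the normalization in loc.\ cit. So the diagram-chase produces $F \mapsto \mu_M(F)|_{c^\vee \times M}$ on the nose.

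To make this rigorous I would proceed in three steps: (i) identify $\Gamma(\LL_N, \mu Sh_{\LL_N})$ with $Sh_{\LL_N}(N)$ and describe the restriction-to-$U$ functor as ``cut down microsupport, then pass to microlocal quotient,'' using that $\mu Sh$ is defined by sheafifying the presheaf of quotient categories; (ii) use the explicit standard coordinates of Lemma~\ref{lem: polar}/Corollary~\ref{cor: biconic} to write $\LL_N$ near $\rT^*_M N$ as $C^\vee \times \LL_M \times 0$ and to see that the chart map $\eta$ agrees with the one appearing in \eqref{eq:diagram-fixediso}–\eqref{eq:choice-iso1}, so that the identification \eqref{eq: cotangent charts} is the relevant one; (iii) compare the resulting functor with the formula $\mu_M = \mathfrak{F}\circ \nu_M$ and check that the restriction at $c^\vee$ matches the chosen left inverse of $j_*\pi^*$ from Remark~\ref{rem: choice}. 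Steps (i) and (iii) are essentially bookkeeping once one trusts the Kashiwara--Schapira account of $\mu_M$ in terms of specialization and Fourier--Sato.

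The main obstacle is step (ii): verifying that the abstract standard coordinates $\eta$ furnished by the polar-hypersurface construction in Lemma~\ref{lem: polar} really do coincide (up to the contractible choices already absorbed) with the very concrete coordinates coming from the diagram $M \leftarrow M\times C \rightarrow M\times V$, so that the chosen isomorphism \eqref{eq: cotangent charts} is literally the Fourier--Sato normalization and not some twist of it. Since both are determined by the same data — a biconic structure along $\rT^*_M N$ together with the ambient fiber polarization — the space of compatible choices is contractible, so they agree up to a contractible space of identifications; but pinning down that the specific representatives match, so that no extra shift or Maslov-type twist appears, is where the real content lies. This is precisely why the statement is phrased as ``an exercise'': the conceptual content is the observation that $j_*\pi^*$ computes specialization and $\mathfrak{F}$ then computes Sato microlocalization, and everything else is tracking normalizations.
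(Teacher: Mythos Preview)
The paper does not actually prove this lemma: it is stated with the preface ``It is an exercise to show,'' and Remark~\ref{rem: choice} explains that the normalization \eqref{eq:choice-iso1} was chosen precisely so that the lemma holds in its stated form. Your proposal is a correct and appropriately detailed outline of how to carry out that exercise --- unwinding the restriction map in $\mu Sh$, identifying it via the chosen coordinates with $j_*\pi^*$ and its Fourier--Sato inverse, and matching against the definition $\mu_M = \mathfrak{F}\circ\nu_M$ from \cite[Def.~4.3.1]{Kashiwara-Schapira} --- so there is nothing to compare against, and your concern about step~(ii) is exactly the point the paper preempts by fixing the normalization in Remark~\ref{rem: choice}.
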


We studied a particular instance of this in \cite[Lemma 7.2.2]{GS17}.  
There we showed that sending a fan to the category of sheaves microsupported in the 
corresponding FLTZ skeleton in fact extends to a functor which we now term
\begin{eqnarray*}
fsh: \mathrm{Fan}^{\twoheadrightarrow} & \to & {}^* \mathrm{DG}^*, \\
\Sigma & \mapsto & Sh_{\LL(\Sigma)}(T_\Sigma).
\end{eqnarray*} 
The maps on morphisms are constructed using the standard charts on $\partial_\infty \LL_\Sigma$ (described here in  Lemma \ref{lem:fltz-inductive}), 
from which one sees that if $\sigma \subset \Sigma$ 
is a cone, then the Sato microlocalization (the composition of specialization to the normal cone with the Fourier-Sato transform) 
along $\sigma^\perp$, followed by projecting out the trivial $\sigma$ factor, gives a map
$$
\mu_{\sigma^\perp}: 
Sh_{\LL(\Sigma)}(T_\Sigma) \to 
Sh_{\LL(\Sigma/\sigma)}(T_{\Sigma/\sigma}).
$$ 

As a fanifold $\Phi$ includes the data of a map $\Exit(\Phi) \to \mathrm{Fan}^{\twoheadrightarrow}$, we may compose 
with $fsh$ to get a map, which we also call 
$fsh: \Exit(\Phi) \to {}^* \mathrm{DG}^*$.  

\begin{proposition} \label{prop: slippery} 
For any fanifold $\Phi$, there is an equivalence $\pi_* \mu sh_{\LL(\Phi)} \cong fsh$  
of sheaves of categories over $\Phi$. 
\end{proposition}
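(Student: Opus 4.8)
The plan is to prove the equivalence $\pi_* \mu sh_{\LL(\Phi)} \cong fsh$ by induction over the fanifold filtration $\Phi_0 \subset \Phi_1 \subset \cdots \subset \Phi_n = \Phi$ used in the proof of Theorem \ref{thm: main construction}, matching each Weinstein handle attachment on the $A$-side with the corresponding structure on the $fsh$ side. Both sides are constructible sheaves of categories on $\Phi$, so it suffices to produce compatible local equivalences. The key point is that, by Theorem \ref{thm: main construction}, $\bW(\Phi)$ is covered by cotangent-bundle charts $\rT^*F_\circ \times \rT^*\widehat{M}_F$ in which $\LL(\Phi)$ is conical and splits as $\partial F_\circ \times \LL_F$ (or $F_\circ \times \LL_F$), and the polarization is the canonical base-times-fiber polarization. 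This is precisely the situation handled by the discussion culminating in Equation \eqref{eq: cotangent charts}: in such a chart the intrinsically defined sheaf $\mu sh$ is identified, via our fixed choice, with the cotangent $\mu Sh$ of the relevant conical Lagrangian, and pushing forward along $\pi$ and integrating out the contractible $C^\vee$-factor gives $\pi_*\mu sh \cong \mu Sh_{\LL_F}$ restricted to a neighborhood of $F$. By Definition \ref{defn:mush1}, $\Gamma(\LL_F, \mu Sh_{\LL_F}) = Sh_{\LL(\Sigma_F)}(T_{\Sigma_F})$, which is exactly the value $fsh(F)$.

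In detail I would proceed as follows. First, establish the statement over $\Phi_0$: this is a disjoint union of fans $\Sigma_P$, where by Step 0 of the proof of Theorem \ref{thm: main construction} we have $\bW(\Sigma_P) = \rT^*\widehat{M}_P$ with $\LL(\Sigma_P)$ the FLTZ Lagrangian and $\pi$ the projection to cotangent fibers. Lemma \ref{lem:fltz-inductive} provides the biconic standard coordinates near each $\partial_\infty \LL_\sigma$, and the inductive formula \eqref{eq:fltz-inductive} exhibits the restriction of $\LL(\Sigma)$ near $\partial_\infty \LL_\sigma$ as $\LL_{\Sigma/\sigma} \times \partial_\infty\sigma \times 0$; applying \eqref{eq: cotangent charts} with $C^\vee = \partial_\infty\sigma$ identifies $\pi_*\mu sh_{\LL(\Sigma)}$ restricted near the stratum $\sigma$ with $\mu Sh_{\LL(\Sigma/\sigma)}$, whose global sections are $Sh_{\LL(\Sigma/\sigma)}(T_{\Sigma/\sigma}) = fsh(\Sigma/\sigma)$. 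Moreover the structure maps of the sheaf $\mu Sh$ on overlaps are, by Lemma \ref{lem: sato}, precisely the Sato microlocalization maps $\mu_{\sigma^\perp}$ followed by projecting out the $\sigma$-factor — which is exactly how the maps of $fsh$ on $\Exit(\Sigma)^{\mathrm{op}}$ were defined in \cite[Lemma 7.2.2]{GS17}. This matches $\pi_*\mu sh_{\LL(\Sigma)} \cong fsh$ over a single fan as sheaves of categories, not just objectwise.

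Next, carry out the inductive step from $\Phi_{k-1}$ to $\Phi_k$. By Remark \ref{rem: handle attachment fanifold} and the proof of Theorem \ref{thm: main construction}, $\Phi_k$ is obtained from $\Phi_{k-1}$ by attaching, for each interior $k$-stratum $F$, a handle $\rT^*F_\circ \times \rT^*\widehat{M}_F$ along $\cL_F \cong \partial F_\circ \times \widehat{M}_F$, and $\LL(\Phi_{k-1})$ factors locally as $\partial F_\circ \times \LL_F$ before extension, becoming $F_\circ \times \LL_F$ after. Since $\mu sh$ is a sheaf of categories and the handle attachment is a codimension-zero gluing of contact pieces, $\pi_*\mu sh_{\LL(\Phi_k)}$ is glued from $\pi_*\mu sh_{\LL(\Phi_{k-1})}$ and the contribution of the new handle; the latter, again by \eqref{eq: cotangent charts} applied in the cotangent chart $\rT^*F_\circ \times \rT^*\widehat{M}_F$ with the canonical polarization, is the constant sheaf on $F_\circ$ with stalk $Sh_{\LL(\Sigma_F)}(T_{\Sigma_F}) = fsh(F)$. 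The gluing data along $\partial F_\circ$ is the restriction map $Sh_{\LL(\Sigma_H)}(T_{\Sigma_H}) \to Sh_{\LL(\Sigma_F)}(T_{\Sigma_F})$ for flags $H \to F$, which by Lemma \ref{lem: sato} is the composite Sato microlocalization map, i.e. the map $fsh$ assigns to the exit path $H \to F$. The fanifold compatibility conditions (used already in Steps 1, 2, $k$ of the proof of Theorem \ref{thm: main construction} to show the extensions exist) are exactly what is needed for these local equivalences to agree on overlaps of different handles sharing a common higher stratum, so the local equivalences glue to a global one $\pi_*\mu sh_{\LL(\Phi_k)} \cong fsh|_{\Phi_k}$.

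The main obstacle is bookkeeping the naturality: one must check not merely that $\pi_*\mu sh_{\LL(\Phi)}$ and $fsh$ have the same stalks, but that the identifications \eqref{eq: cotangent charts} in the various cotangent charts are mutually compatible on overlaps, so that they assemble into an equivalence of sheaves of categories over the full exit-path category $\Exit(\Phi)$. This rests on two things: that our \emph{fixed} choice of normalization in Remark \ref{rem: choice} makes Lemma \ref{lem: sato} hold with the Sato microlocalization of \cite[Chap. 4.3]{Kashiwara-Schapira} on the nose, and that the biconic coordinates of Lemma \ref{lem:fltz-inductive} were chosen compatibly for nested cones ($R_\tau \cap N_\sigma \subset R_\sigma$), so that the iterated handle extensions in Step $k$ do not introduce discrepancies. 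Granting those, the compatibility is a formal consequence of the sheaf property of $\mu sh$ together with the transitivity of Sato microlocalization along a flag of submanifolds, which matches the composition law in $\mathrm{Fan}^{\twoheadrightarrow}$ under $fsh$. I would organize this last point as the heart of the argument and treat everything else as setup.
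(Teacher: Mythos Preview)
Your proposal is correct and follows essentially the same approach as the paper: both arguments rest on the cotangent-bundle charts of Theorem \ref{thm: main construction}, the fixed identification \eqref{eq: cotangent charts} on each chart, and Lemma \ref{lem: sato} to match the restriction maps with the Sato microlocalizations defining $fsh$. The only difference is organizational: the paper argues directly from the cover by charts and their overlaps (noting that restriction maps are either trivial or Sato microlocalizations), whereas you thread the same ingredients through an induction on the handle filtration $\Phi_0 \subset \cdots \subset \Phi_n$; this adds bookkeeping but no new content.
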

\begin{proof}
Both sheaves of categories $\pi_* \mu sh_{\LL(\Phi)}$ and $fsh$ can be described in terms of the images
in $\Phi$ of the cover discussed in Theorem \ref{thm: main construction}, and the corresponding overlaps.  
But all charts in this cover are of the form $\rT^*M \times \rT^* F$, where $\bL(\Phi)$ is some conic in $\rT^*M$ 
times the zero section
in $\rT^*F$, and the polarization is the fiber direction in $\rT^*M$ times the base direction in $\rT^*F$.  We have 
seen this gives fixed identifications of the corresponding sections of 
$\pi_* \mu sh_{\LL(\Phi)}$ and $fsh$.  
Moreover, 
all restriction maps from the standard charts are either trivial (i.e. induced from the restriction of the contractible $F$ to a contractible open subset),
or of precisely the kind we have just seen correspond to the defining Sato microlocalizations of $fsh$.   
\end{proof}

\subsection{Viterbo restriction}

In general, given a Weinstein subdomain $W' \subset W$, there is a Viterbo restriction functor 
$\Fuk(W) \to \Fuk(W')$.  (We use the definition given in \cite[Sec. 8.3]{GPS2}, which is conjecturally 
equivalent to the partially defined functor of \cite{AS} on the domain of definition of the latter.)  This 
functor is the quotient by the cocores of $W$ which are not contained in $W'$ \cite[Prop. 8.15]{GPS2}.  

Consider the category $\mathrm{WeinSubDom}$ whose morphisms are inclusions of Weinstein subdomains.  
We write 
\[
\Fuk^*: \mathrm{WeinSubDom}^{op} \to {}^* {}^* \mathrm{DG}
\]
for the contravariant functor taking inclusions of subdomains to Viterbo restriction of Fukaya categories
(which preserves compact objects because it is defined before 
taking module categories).  

Given a fanifold $\Phi$, recall that we write $\mathrm{Closed}(\Phi)$ for the poset of constructible closed sets and inclusions among them.  
From Theorem \ref{thm: main construction} (3), we have a functor 
\[
\bW:\mathrm{Closed}(\Phi) \to \mathrm{WeinSubDom},
\] and by composition with $\Fuk^*$, we can
obtain a  functor 
\[
\Fuk^* \circ \bW: \mathrm{Closed}(\Phi)^{op} \to {}^* {}^* \mathrm{DG}.  
\]

Now consider any closed $\Phi' \subset \Phi$.  
From \cite[Prop. 8.15]{GPS2} and the comparison \cite{GPS3}, we have a commutative diagram where the rows are exact: 

\begin{equation} \label{eq: viterbo} 
\begin{tikzcd}
\mu sh_{\LL(\Phi)} (\LL(\Phi) \setminus \LL(\Phi'))^{op} \arrow[r, "\eta_!"] \arrow[d, equal] & \mu sh_{\LL(\Phi)} (\LL(\Phi))^{op}  \arrow[d, equal] \arrow[r]  &  \mu sh_{\LL(\Phi')} (\LL(\Phi'))^{op} \arrow[r] \arrow[d, equal] & 0  \\ 
\langle \text{Cocores of } \bW(\Phi) \setminus \bW(\Phi') \rangle  \arrow[r] & \Fuk(\bW(\Phi)) \arrow[r, "v"] & \Fuk(\bW(\Phi')) \arrow[r] & 0
\end{tikzcd}
\end{equation} 

In the above diagram, the lower-right map $v$ is Viterbo restriction, and the upper-left map $\eta_!$ is the left adjoint to the natural restriction of microsheaves.

\section{Homological mirror symmetry at large volume} \label{sec:hms}

By now, given a fanifold $\Phi$, we have produced two constructible sheaves of categories: 
$\Coh^! \circ \bT$, defined from the algebraic geometry of toric varieties, and $\pi_* \mu sh_{\LL(\Phi)},$ defined from 
symplectic geometry and microlocal sheaf theory.   Now we compare them.  

The basic ingredient is mirror symmetry for toric varieties.  
In the framework of microlocal sheaf theory, mirror symmetry for toric varieties 
was formulated in \cite{FLTZ2} and proven in \cite{Ku}.  Crucial to our approach
is a functoriality result established in \cite{GS17}, matching restriction to orbit closures
with microlocalization.  Let us formulate these results in our current terminology.  

In this section, we assume all fans are smooth.  However, we will remove this hypothesis
in Theorem
\ref{thm: singular}, so we will leave 
it out of the theorem statements. 

\begin{remark}
The (temporary) restriction to smooth fans has to do with fact that in both \cite{FLTZ2} 
and \cite{GS17}, calculations are made using a certain collection of objects which, 
on the B-side, are quasi-coherent sheaves.  
To proceed using these objects in general would required finding appropriate ind-coherent lifts.  
Although this is presumably possible, it is not the strategy of proof in \cite{Ku}. Rather,
in \cite{Ku} the result in the smooth case is used to deduce 
the corresponding result in the general case by descending along toric blowups.  
In Theorem \ref{thm: singular}, we will imitate this strategy to remove
the hypothesis of smoothness from the result of \cite{GS17} 
\end{remark} 

\begin{theorem} \label{thm: toric mirror symmetry} \cite{Ku, GS17}
The functors $fsh^{op}$ and $\Coh^! \circ \bT$ from $\mathrm{Fan}^{\twoheadrightarrow} \to {}^* \mathrm{DG}^*$, 
are equivalent.  
\end{theorem}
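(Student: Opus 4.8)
The plan is to reduce Theorem \ref{thm: toric mirror symmetry} to the object-level equivalences of \cite{Ku} together with the functoriality statement of \cite{GS17}, repackaged in the language of the functors $fsh$ and $\Coh^! \circ \bT$ on $\mathrm{Fan}^{\twoheadrightarrow}$. First I would recall that for a fixed smooth fan $\Sigma$ with lattice $M$, \cite{FLTZ2, Ku} provide an equivalence $\Coh(\bT(\Sigma)) \simeq Sh_{\LL(\Sigma)}(\widehat M)$ (the coherent-constructible correspondence); since we are working with the ind-complete/presentable variants, I would note that these equivalences are between objects of ${}^*\mathrm{DG}^*$ and are natural in $\Sigma$ for isomorphisms of fans. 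The content of the theorem is therefore the compatibility with the two families of morphisms in $\mathrm{Fan}^{\twoheadrightarrow}$: for a cone $\sigma \in \Sigma$, the closed embedding $\overline{O(\sigma)} = \bT(\Sigma/\sigma) \hookrightarrow \bT(\Sigma)$ on the B-side and the microlocalization functor $\mu_{\sigma^\perp}: Sh_{\LL(\Sigma)}(\widehat M) \to Sh_{\LL(\Sigma/\sigma)}(\widehat{M/\sigma})$ on the A-side. Since $\Coh^!$ sends closed embeddings to pullback and $fsh$ is defined to send the generating morphism $[\Sigma \to \Sigma/\sigma]$ precisely to $\mu_{\sigma^\perp}$, what must be checked is that the CCC equivalence intertwines $(\,\cdot\,)^* $ for $\overline{O(\sigma)}\hookrightarrow \bT(\Sigma)$ with $\mu_{\sigma^\perp}$.

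The key steps, in order, would be: (1) state the CCC equivalence $\Theta_\Sigma: Sh_{\LL(\Sigma)}(\widehat M) \xrightarrow{\sim} \Coh(\bT(\Sigma))$ as a natural isomorphism of functors on the groupoid of smooth fans and isomorphisms, using the presentable enhancement of \cite{Ku}; (2) reduce to generating morphisms, i.e. observe that every morphism in $\mathrm{Fan}^{\twoheadrightarrow}$ factors (by definition of the morphisms, which are indexed by cones) as a composite of the elementary quotients $[\Sigma \to \Sigma/\sigma]$ for $\sigma$ a ray, so it suffices to check compatibility with those; (3) invoke the main functoriality computation of \cite{GS17} — specifically the identification in \cite[Lemma 7.2.2]{GS17} and the surrounding material, reproduced here as Lemma \ref{lem: sato} — which says that microlocalization along $\sigma^\perp$ followed by projecting out the $\sigma$-factor is carried by $\Theta$ to restriction of coherent sheaves along $\bT(\Sigma/\sigma) \hookrightarrow \bT(\Sigma)$; (4) assemble these into a natural transformation $fsh^{op} \Rightarrow \Coh^! \circ \bT$ of functors $\mathrm{Fan}^{\twoheadrightarrow} \to {}^*\mathrm{DG}^*$ and note it is objectwise an equivalence, hence an equivalence of functors. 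Throughout one should be careful that $\Coh$ means IndCoh, but for the toric varieties $\bT(\Sigma)$ — which are smooth when $\Sigma$ is smooth — IndCoh and QCoh agree, so the quasi-coherent statements of \cite{FLTZ2, GS17} suffice and no ind-coherent lift is needed (this is exactly the point of the \textit{Remark} preceding the theorem).

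The main obstacle is step (3): verifying that the geometric operation of Viterbo-type microlocalization $\mu_{\sigma^\perp}$ corresponds on the B-side to the honest restriction functor $i^*$ for the closed embedding of the orbit closure, rather than some twist of it. This is where the precise normalization of the standard coordinates on $\partial_\infty\LL(\Sigma)$ (Lemma \ref{lem:fltz-inductive}), the chosen Fourier–Sato conventions fixed in Remark \ref{rem: choice}, and the structure of the CCC generators in \cite{Ku} all have to be matched up — and it is essentially the content of \cite[Section 7]{GS17}. My plan is to cite that computation rather than reprove it, and to spend the proof's energy instead on the purely formal bookkeeping of (1), (2), (4): promoting the equivalence to the presentable setting, checking naturality for fan isomorphisms, and confirming that the two functors' values on composites of generating morphisms agree (which is automatic once they agree on generators, since both $fsh$ and $\Coh^!\circ\bT$ are genuine functors on $\mathrm{Fan}^{\twoheadrightarrow}$).
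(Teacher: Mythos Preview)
Your proposal is correct and follows essentially the same route as the paper: invoke \cite{FLTZ2, Ku} for the objectwise equivalence and \cite[Section 7]{GS17} for compatibility with the morphisms in $\mathrm{Fan}^{\twoheadrightarrow}$. Two small corrections: the functor $\Coh^!\circ\bT$ assigns to $[\Sigma\to\Sigma/\sigma]$ the $!$-pullback $i^!$ along the closed embedding (not $i^*$ as you wrote in step (3)), and the paper's Lemma~\ref{lem: sato} is not the B-side comparison but rather the identification of the A-side restriction with Sato microlocalization; the actual intertwining with the B-side is the content of \cite[Lemmas 7.2.1 and 7.2.2]{GS17}, both of which the paper cites.
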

\begin{proof}
  For smooth fans, \cite{FLTZ2} gives a morphism $\Coh(\bT(\Sigma)) \hookrightarrow Sh_{\LL(\Sigma)}(T_\Sigma)^{op} =: fsh(\Sigma)^{op}$,
  and 
this morphism is proven by \cite{Ku} to be an isomorphism.  For smooth fans, compatibility with the structure of functors 
out of $\mathrm{Fan}^{\twoheadrightarrow}$ follows from 
the comparison of \cite[Lemma 7.2.1]{GS17} and \cite[Lemma 7.2.2]{GS17}.  
\end{proof}

\begin{theorem} \label{thm: local mirror symmetry}
There is an equivalence of sheaves of categories on $\Phi$:
$$\Coh^! \circ \bT \cong \pi_* \mu sh_{\LL(\Phi)}^{op}.$$ 
\end{theorem}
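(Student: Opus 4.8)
The plan is to deduce the theorem formally from Proposition~\ref{prop: slippery} and Theorem~\ref{thm: toric mirror symmetry}, between them those two results carry all of the geometric content. Recall that a constructible sheaf of categories on $\Phi$ is the same datum as a functor out of $\Exit(\Phi)$, and that both sides of the asserted equivalence are constructible sheaves of categories presented in this way: $\Coh^! \circ \bT$ is by construction the composite of the fanifold's structure functor $\Exit(\Phi) \to \mathrm{Fan}^{\twoheadrightarrow}$ with the functor $\Coh^! \circ \bT \colon \mathrm{Fan}^{\twoheadrightarrow} \to {}^* \mathrm{DG}^*$, while by Proposition~\ref{prop: slippery} the pushforward $\pi_* \mu sh_{\LL(\Phi)}$ is identified with $fsh$, that is, with the composite of the \emph{same} structure functor $\Exit(\Phi) \to \mathrm{Fan}^{\twoheadrightarrow}$ with $fsh \colon \mathrm{Fan}^{\twoheadrightarrow} \to {}^* \mathrm{DG}^*$; taking opposite categories pointwise, $\pi_* \mu sh_{\LL(\Phi)}^{op}$ is the composite with $fsh^{op}$.

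So the first step is to rewrite both sides as precompositions, along the single functor $\Exit(\Phi) \to \mathrm{Fan}^{\twoheadrightarrow}$, of functors defined on all of $\mathrm{Fan}^{\twoheadrightarrow}$. The second step is to invoke Theorem~\ref{thm: toric mirror symmetry}, which supplies a natural equivalence $fsh^{op} \cong \Coh^! \circ \bT$ of functors $\mathrm{Fan}^{\twoheadrightarrow} \to {}^* \mathrm{DG}^*$. Since a natural isomorphism of functors may be precomposed with any functor, applying it after $\Exit(\Phi) \to \mathrm{Fan}^{\twoheadrightarrow}$ produces a natural equivalence of the two exit-path functors above, hence an equivalence of the corresponding constructible sheaves of categories on $\Phi$; chaining with the identification of Proposition~\ref{prop: slippery} gives $\Coh^! \circ \bT \cong fsh^{op} \cong \pi_* \mu sh_{\LL(\Phi)}^{op}$. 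One should also record that the equivalence so obtained is compatible with the open-restriction and Viterbo-restriction functorialities, which is immediate since by \eqref{eq: mirror to viterbo} and \eqref{eq: viterbo} these too are expressed through the same exit-path data.

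Granting the two cited results, there is no genuine obstacle: the real work --- the microlocal gluing computation that reconstructs $\pi_* \mu sh_{\LL(\Phi)}$ from the cotangent-bundle charts of Theorem~\ref{thm: main construction}, and toric homological mirror symmetry together with its compatibility with restriction to orbit closures --- is exactly what Proposition~\ref{prop: slippery} and Theorem~\ref{thm: toric mirror symmetry} deliver. The one point that needs care is the bookkeeping of the $(-)^{op}$ decorations and the sign of the FLTZ Lagrangian: one must check that the opposite category built into the normalization of $\pi_* \mu sh_{\LL(\Phi)}$ (coming from the localization statement $\Gamma(\LL(\Phi), \mu sh_{\LL(\Phi)})^{op} \cong \Fuk(\bW(\Phi), \partial \LL(\Phi))$) is precisely the $(-)^{op}$ being compensated by writing $fsh^{op}$ in Theorem~\ref{thm: toric mirror symmetry}, and that it is $\LL(\Sigma)$ rather than $-\LL(\Sigma)$ that occurs on both sides; the discussion in Section~\ref{sec: microsheaves} on absorbing $(-)^{op}$ into negation of the symplectic form (equivalently, of $\LL(\Sigma)$) is what makes these conventions agree. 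Once they are aligned, the equivalence is immediate.
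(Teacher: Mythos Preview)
Your proposal is correct and follows exactly the paper's approach: compose the fanifold's structure map $\Exit(\Phi) \to \mathrm{Fan}^{\twoheadrightarrow}$ with the equivalence $fsh^{op} \cong \Coh^! \circ \bT$ of Theorem~\ref{thm: toric mirror symmetry}, and invoke Proposition~\ref{prop: slippery} to identify $fsh$ with $\pi_* \mu sh_{\LL(\Phi)}$. Your additional discussion of the $(-)^{op}$ bookkeeping and of Viterbo-restriction compatibility is fine but goes beyond what the theorem itself asserts (the latter is the content of the subsequent Corollary~\ref{cor: viterbo functoriality}).
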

\begin{proof}
This follows by composing Theorem \ref{thm: toric mirror symmetry} with  
the map $\Exit(\Phi) \to \mathrm{Fan}^{\twoheadrightarrow}$, using Proposition \ref{prop: slippery} to identify $fsh$ with $\pi_*\mu sh_{\LL(\Phi)}.$ 
\end{proof} 

\begin{theorem} \label{thm: global mirror symmetry} 
There is an equivalence of categories $\Coh(\bT(\Phi)) \cong \Fuk(\bW(\Phi), \partial \LL(\Phi))$.
\end{theorem}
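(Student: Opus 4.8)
The plan is to deduce the global equivalence from the local equivalence of sheaves of categories established in Theorem~\ref{thm: local mirror symmetry} by taking global sections of both sides. On the symplectic side, the comparison result \cite[Thm.~1.4]{GPS3} recalled in Section~\ref{sec: microsheaves} identifies the (module category over the partially) wrapped Fukaya category $\Fuk(\bW(\Phi),\partial\LL(\Phi))$ with $\Gamma(\LL(\Phi),\mu sh_{\LL(\Phi)})^{op}$, and since the pushforward $\pi\colon \LL(\Phi)\to\Phi$ does not change global sections, this equals $\Gamma(\Phi,\pi_*\mu sh_{\LL(\Phi)})^{op}$. On the algebraic side, Proposition~\ref{prop: colimits of coh} (equivalently the dual statement that $\Coh^!\circ\bT$ is a sheaf of categories whose global sections compute $\Coh$ of the colimit, via \cite[Chap. 8.A, Thm. A.1.2]{GR2}) identifies $\Gamma(\Phi,\Coh^!\circ\bT)$ with $\Coh(\bT(\Phi))$. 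So the statement is obtained by applying $\Gamma(\Phi,-)$ to the equivalence $\Coh^!\circ\bT\cong(\pi_*\mu sh_{\LL(\Phi)})^{op}$ of Theorem~\ref{thm: local mirror symmetry} and invoking these two identifications.

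Concretely, I would proceed in three steps. First, record that $\Gamma(\Phi,-)$, being a limit over $\Exit(\Phi)^{op}$, takes an equivalence of sheaves of categories to an equivalence of categories; this is immediate since limits in ${}^*\mathrm{DG}$, $\mathrm{DG}^*$, and the category of DG categories all agree, as noted in the discussion of categorical conventions. Second, identify the left-hand global sections: $\Gamma(\Phi,\Coh^!\circ\bT)=\varprojlim_{\Exit(\Phi)}\Coh(\bT(F))$, which by adjunction is $\varinjlim_{\Exit(\Phi)^{op}}\Coh(\bT(F))$ in ${}^*{}^*\mathrm{DG}$, and this is $\Coh(\bT(\Phi))$ by Proposition~\ref{prop: colimits of coh}. (Here $\bT(\Phi)$ is interpreted as an algebraic space if $\Exit(\Phi)$ is not a poset, as in Proposition~\ref{prop: algebraic space}.) Third, identify the right-hand global sections with the Fukaya category using \cite[Thm.~1.4]{GPS3} together with Proposition~\ref{prop: slippery}, which gives $\pi_*\mu sh_{\LL(\Phi)}\cong fsh$, so that the $op$ appearing in the microlocal-to-Fukaya comparison matches the $op$ appearing in Theorem~\ref{thm: local mirror symmetry} and the two cancel against each other; the net result is $\Gamma(\Phi,\pi_*\mu sh_{\LL(\Phi)})^{op}\cong\Coh(\bT(\Phi))$.

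The main obstacle is really just bookkeeping rather than a new idea: one must make sure that the $op$'s, the passage between colimit-completions (${}^*\mathrm{DG}$ versus $\mathrm{DG}^*$ versus $\mathrm{dg}$), and the direction of the structure maps (restriction of coherent sheaves versus pushforward, Viterbo restriction versus corestriction) are all threaded consistently through $\Gamma(\Phi,-)$. The sheaf $\Coh^!\circ\bT$ is built from pullbacks along open (really, complementary-to-closed) inclusions while $\Coh_!\circ\bT$ is built from pushforwards along closed embeddings, and these are adjoint; the microsheaf sheaf $\mu sh_{\LL(\Phi)}$ has corestriction maps with left adjoints the maps $\eta_!$ of diagram~\eqref{eq: viterbo}. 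The claim to verify is that under the local equivalence of Theorem~\ref{thm: local mirror symmetry} these adjoint pairs are matched, so that taking either the limit (computing $\Coh$ and the microsheaf global sections directly) or the colimit (computing the compact, i.e. bounded-derived-coherent, picture and the wrapped Fukaya category) gives the same answer; this compatibility is exactly what was arranged in Proposition~\ref{prop: slippery} and in the construction of $fsh$ via Sato microlocalization.

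\begin{proof}
By Theorem~\ref{thm: local mirror symmetry} there is an equivalence of sheaves of categories on $\Phi$,
\[
\Coh^!\circ\bT \;\cong\; \bigl(\pi_*\mu sh_{\LL(\Phi)}\bigr)^{op}.
\]
Apply the global sections functor $\Gamma(\Phi,-)=\varprojlim_{\Exit(\Phi)}$. Since limits in ${}^*\mathrm{DG}$, $\mathrm{DG}^*$, and the category of DG categories all agree, this takes the displayed equivalence to an equivalence of DG categories
\[
\Gamma(\Phi,\Coh^!\circ\bT) \;\cong\; \Gamma\bigl(\Phi,\pi_*\mu sh_{\LL(\Phi)}\bigr)^{op}.
\]
The left-hand side: passing to adjoints identifies $\varprojlim_{\Exit(\Phi)}\Coh(\bT(F))$ with $\varinjlim_{\Exit(\Phi)^{op}}\Coh(\bT(F))=(\Coh_!\circ\bT)(\Phi)$, which by Proposition~\ref{prop: colimits of coh} is $\Coh(\bT(\Phi))$ (interpreting $\bT(\Phi)$ as an algebraic space via Proposition~\ref{prop: algebraic space} if $\Exit(\Phi)$ is not a poset). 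The right-hand side: since $\pi$ does not change global sections, $\Gamma(\Phi,\pi_*\mu sh_{\LL(\Phi)})=\Gamma(\LL(\Phi),\mu sh_{\LL(\Phi)})$, and by \cite[Thm.~1.4]{GPS3} (recalled in Section~\ref{sec: microsheaves}) this is $\Fuk(\bW(\Phi),\partial\LL(\Phi))^{op}$; taking $op$ once more yields $\Fuk(\bW(\Phi),\partial\LL(\Phi))$. Combining, $\Coh(\bT(\Phi))\cong\Fuk(\bW(\Phi),\partial\LL(\Phi))$.
\end{proof}
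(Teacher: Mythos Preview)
Your proof is correct and follows exactly the same approach as the paper's own proof: take global sections of the local equivalence of Theorem~\ref{thm: local mirror symmetry}, then use Proposition~\ref{prop: colimits of coh} to identify the B-side global sections with $\Coh(\bT(\Phi))$ and \cite[Thm.~1.4]{GPS3} to identify the A-side global sections with $\Fuk(\bW(\Phi),\partial\LL(\Phi))$. Your write-up simply spells out more of the bookkeeping (the $op$'s, the limit/colimit adjunction, the r\^ole of Proposition~\ref{prop: slippery}) than the paper's terse two-sentence proof, but the substance is identical.
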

\begin{proof}
We conclude this by taking global sections of the comparison in Theorem \ref{thm: local mirror symmetry}, 
using Proposition \ref{prop: colimits of coh} to compute the left-hand side and the comparison between microsheaves
and Fukaya categories \cite{GPS3} for the right-hand side.   
\end{proof}

\begin{remark}
The proof of \cite[Theorem 7.4.1]{GS17} amounts to the special case when $\Phi = S^n \cap \Sigma$. 
In that setting, as everything in sight was embedded into a cotangent bundle, we did not need the 
constructions of \cite{shende-microlocal, NS20}, and correspondingly did
not need Proposition \ref{prop: slippery}. 
\end{remark} 

\begin{remark}
The isomorphism of Theorem \ref{thm: global mirror symmetry} takes the section mentioned in Remark \ref{rem: takeda section}
to the structure sheaf, as follows by gluing together analogous (known) statement in the case of toric varieties.  This 
is as one would expect from the SYZ picture. 
\end{remark}

\begin{remark}
The categorical Calabi-Yau structure plays a key role in the proposal to extract higher-genus enumerative invariants from the Fukaya category
\cite{Costello}, and thus to pursue this direction it would be desirable to show that mirror symmetry is compatible with Calabi-Yau structures.
In this situation, the local-to-global formalism of \cite{shende-takeda} provides a natural framework for doing so.  Indeed, 
when all fans are smooth and complete, the various (all isomorphic) constructible sheaves of categories on fanifolds we have produced here are locally saturated, 
so that the main result of \cite{shende-takeda} provides a local Calabi-Yau structure on $\pi_* \mu sh_{\LL(\Phi)}$.  
\end{remark}

We turn to compatibility with Viterbo restriction.  Recall that we write $\mathrm{Closed}(\Phi)$ for the poset of closed constructible subsets. 

\begin{corollary} \label{cor: viterbo functoriality} 
There is an equivalence 
\[\Coh^* \circ \bU \cong \Fuk^* \circ \bW\]
of contravariant functors from
$\mathrm{Closed}(\Phi)$ to ${}^* {}^* \mathrm{DG}$. 
\end{corollary}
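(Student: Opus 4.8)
The plan is to deduce Corollary~\ref{cor: viterbo functoriality} from Theorem~\ref{thm: global mirror symmetry} (applied to all constructible subsets simultaneously) together with the two exact-sequence descriptions already in place: the B-side sequence \eqref{eq: mirror to viterbo} expressing $(\Coh\circ\bU)(\cZ)$ as the quotient $(\Coh\circ\bT)(\Phi)/(\Coh\circ\bT)(\Phi\setminus\cZ)$, and the A-side diagram \eqref{eq: viterbo} identifying $\Fuk(\bW(\Phi'))$ as the quotient of $\Fuk(\bW(\Phi))$ by the cocores lying outside $\bW(\Phi')$, i.e. as $\mu sh_{\LL(\Phi)}(\LL(\Phi))^{op}/\mu sh_{\LL(\Phi)}(\LL(\Phi)\setminus\LL(\Phi'))^{op}$.

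First I would upgrade Theorem~\ref{thm: local mirror symmetry} to a statement about the functor on $\Exit(\Phi)$ rather than just an equivalence of sheaves of categories: the equivalence $\Coh^!\circ\bT\cong\pi_*\mu sh_{\LL(\Phi)}^{op}$ is natural, so for any constructible open $V\subset\Phi$ (equivalently any closed $\cZ=\Phi\setminus V$) it restricts to an equivalence of the sections over $V$, compatibly with the restriction maps $\Gamma(\Phi,-)\to\Gamma(V,-)$. Taking global sections and using Proposition~\ref{prop: colimits of coh} on the left and \cite{GPS3} on the right (exactly as in the proof of Theorem~\ref{thm: global mirror symmetry}), this gives, for every closed $\cZ$, a commuting square
\[
\begin{tikzcd}
\Coh(\bT(\Phi)) \arrow[r,"\sim"] \arrow[d] & \Fuk(\bW(\Phi)) \arrow[d] \\
\Coh(\bT(\Phi\setminus\cZ)) \arrow[r,"\sim"] & \Fuk(\bW(\Phi\setminus\cZ))
\end{tikzcd}
\]
where the left vertical map is pushforward along the closed embedding $\bT(\Phi\setminus\cZ)\hookrightarrow\bT(\Phi)$ and the right vertical map is the inclusion of the subcategory generated by the cocores surviving in $\bW(\Phi\setminus\cZ)$ — this is the content of \eqref{eq: viterbo} read off the left two columns. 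The key point is that these squares are natural in $\cZ\in\mathrm{Closed}(\Phi)$, since both vertical families arise from the single functor equivalence of Theorem~\ref{thm: local mirror symmetry}.

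Next I would pass to quotients. On the B-side, \eqref{eq: mirror to viterbo} identifies the cofiber of the left vertical map with $(\Coh\circ\bU)(\cZ)=\Coh(\bT(\Phi)\setminus\bT(\Phi\setminus\cZ))$, functorially in $\cZ$ since $\bU$ is a functor. On the A-side, \eqref{eq: viterbo} identifies the cofiber of the right vertical map with $\Fuk(\bW(\Phi\setminus(\Phi\setminus\cZ)))$... more precisely with the Viterbo restriction target; comparing \eqref{eq: viterbo} for the pair $(\Phi,\Phi\setminus\cZ)$ shows the cofiber is $\Fuk(\bW(\cZ'))$ where $\bW(\cZ')=\bW(\Phi)\setminus\bW(\Phi\setminus\cZ)$ is the Weinstein subdomain, i.e. exactly $(\Fuk^*\circ\bW)(\cZ)$ after matching the definition of $\bW$ on $\mathrm{Closed}(\Phi)$ with the complement convention used for $\bU$. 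Since an equivalence of arrows induces an equivalence of cofibers, the commuting squares above yield equivalences $(\Coh\circ\bU)(\cZ)\cong(\Fuk^*\circ\bW)(\cZ)$ compatible with inclusions $\cZ\subset\cZ'$, which is the asserted equivalence of contravariant functors $\mathrm{Closed}(\Phi)\to{}^{**}\mathrm{DG}$. (Compact objects are preserved throughout: pushforward along closed embeddings and Viterbo restriction both preserve compacts, as noted in the text, so everything takes place in ${}^{**}\mathrm{DG}$.)

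The main obstacle is bookkeeping the two complementation conventions so that the quotient on the A-side really computes $\Fuk^*\circ\bW$ and not its opposite or a shifted variant: one must check that the subdomain $\bW(\Phi)\setminus\bW(\Phi\setminus\cZ)$ produced by taking complements matches the functor $\bW:\mathrm{Closed}(\Phi)\to\mathrm{WeinSubDom}$ of Theorem~\ref{thm: main construction}(3), that Viterbo restriction to it is the quotient by the complementary cocores (\cite[Prop.~8.15]{GPS2}), and that the ``$op$'' appearing in \eqref{eq: viterbo} and in Theorem~\ref{thm: local mirror symmetry} cancel consistently — this is the same $op$-bookkeeping already handled in passing from Theorem~\ref{thm: local mirror symmetry} to Theorem~\ref{thm: global mirror symmetry}, so no new difficulty arises, only care. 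A secondary routine check is that the equivalence of squares is natural in $\cZ$: this follows because all four corners and all four maps are obtained by applying global sections, Proposition~\ref{prop: colimits of coh}, and \cite[Thm.~1.4]{GPS3} to the single sheaf-of-categories equivalence of Theorem~\ref{thm: local mirror symmetry}, all of which are functorial.
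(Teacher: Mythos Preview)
Your approach is exactly the paper's: compare the exact sequences \eqref{eq: mirror to viterbo} and \eqref{eq: viterbo}, use Theorem~\ref{thm: local mirror symmetry} to match the first two terms functorially, and conclude a matching of the third (quotient) terms. Your writeup is a more detailed unpacking of precisely this argument, and the extra bookkeeping you flag (naturality in $\cZ$, the $op$'s, complementation conventions) is all correct and implicit in the paper's one-line proof.

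One small slip: in your displayed square the vertical arrows point downward, but the maps you then describe in prose (pushforward along the closed embedding $\bT(\Phi\setminus\cZ)\hookrightarrow\bT(\Phi)$; inclusion of the subcategory generated by certain cocores, i.e.\ $\eta_!$) both go \emph{upward}, from the bottom row into the top, as in \eqref{eq: mirror to viterbo} and \eqref{eq: viterbo}. The cofiber you take in the next paragraph is of those upward maps, so the argument is fine, but the diagram as drawn should have its vertical arrows reversed (or the rows swapped).
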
 
\begin{proof}
  Compare the short exact sequence in \eqref{eq: mirror to viterbo} to the short exact sequence in \eqref{eq: viterbo}.
Theorem \ref{thm: local mirror symmetry} gives a functorial matching of the first two terms; 
hence we obtain one for the third. 
\end{proof} 

\begin{corollary}\label{cor: seidel locality} 
For closed $\Phi$ covered by closed subsets $\Phi_\alpha$, the map from $\Fuk(\bW(\Phi))$ 
to the limit
$$\varprojlim \bigg( \prod_\alpha \Fuk( \bW(\Phi_\alpha)) \to \prod_{\{\alpha, \beta\}}  
\Fuk(\bW(\Phi_\alpha \cap \Phi_\beta)) \to \prod_{\{\alpha, \beta, \gamma \}}  
\Fuk (\bW(\Phi_\alpha \cap \Phi_\beta \cap \Phi_\gamma)) \to \cdots \bigg)$$
is an isomorphism.  Here the $\bW(\Phi_\alpha)$ (etc.) are Weinstein subdomains and the maps are Viterbo restrictions. 
\end{corollary}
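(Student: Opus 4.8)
The plan is to pass the statement through the mirror equivalence and reduce it to \v{C}ech descent for $\Coh$ on the B-side. Since $\Phi$ is closed, $\LL(\Phi)$ is the skeleton of $\bW(\Phi)$ by Theorem~\ref{thm: main construction}(2), so $\Fuk(\bW(\Phi))$ is the value at $\Phi\in\mathrm{Closed}(\Phi)$ of the functor $\Fuk^*\circ\bW$, and the limit appearing in the statement is exactly the limit of $\Fuk^*\circ\bW$ over the \v{C}ech nerve of the closed cover $\Phi=\bigcup_\alpha\Phi_\alpha$: the intersections $\Phi_{\alpha_0}\cap\cdots\cap\Phi_{\alpha_k}$ are again closed constructible, hence objects of $\mathrm{Closed}(\Phi)$, and the structure maps in the diagram are the Viterbo restrictions that $\Fuk^*\circ\bW$ assigns to the inclusions obtained by dropping one index. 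Applying the equivalence $\Coh^*\circ\bU\cong\Fuk^*\circ\bW$ of Corollary~\ref{cor: viterbo functoriality} term by term — which at the object $\Phi$ itself recovers the equivalence $\Coh(\bT(\Phi))\cong\Fuk(\bW(\Phi))$ of Theorem~\ref{thm: global mirror symmetry}, since $\bU(\Phi)=\bT(\Phi)$ — it therefore suffices to prove that the natural map
\[
\Coh(\bT(\Phi))\;\longrightarrow\;\varprojlim\Big(\prod_\alpha\Coh(\bU(\Phi_\alpha))\to\prod_{\{\alpha,\beta\}}\Coh(\bU(\Phi_\alpha\cap\Phi_\beta))\to\cdots\Big)
\]
is an equivalence.

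Next I would recognize this as $\infty$-categorical descent for an honest finite Zariski open cover. By construction $\bU$ carries finite intersections and unions of closed constructible subsets to the corresponding intersections and unions of Zariski opens of $\bT(\Phi)$ — this is precisely the fact already used in the proof of Proposition~\ref{prop:zariski} — so $\bU(\Phi_{\alpha_0}\cap\cdots\cap\Phi_{\alpha_k})=\bU(\Phi_{\alpha_0})\cap\cdots\cap\bU(\Phi_{\alpha_k})$, and $\bigcup_\alpha\bU(\Phi_\alpha)=\bU(\bigcup_\alpha\Phi_\alpha)=\bU(\Phi)=\bT(\Phi)$, the last step because $\Phi\setminus\Phi=\emptyset$ and $\bT(\emptyset)=\emptyset$. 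Thus $\{\bU(\Phi_\alpha)\hookrightarrow\bT(\Phi)\}$ is a finite Zariski open cover, and the displayed map is the totalization comparison map for $\Coh=\mathrm{IndCoh}$ with its $*$-pullback along open immersions (which for open immersions agrees with $!$-pullback). Since $\mathrm{IndCoh}$ satisfies Zariski (indeed \'etale) descent — the two-term instance is Proposition~\ref{prop:zariski}, and the general cosimplicial statement follows formally from it by iterating the two-term descent, or directly from the descent properties of $\mathrm{IndCoh}$ in \cite{GR1, GR2} — this map is an equivalence, proving the corollary.

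The step requiring the most care is the first one: checking that the equivalence of Corollary~\ref{cor: viterbo functoriality}, being an equivalence of contravariant functors out of $\mathrm{Closed}(\Phi)$, induces an equivalence of the associated cosimplicial diagrams and hence of their limits, and that under this identification the canonical map out of $\Fuk(\bW(\Phi))$ is matched with the canonical map out of $\Coh(\bT(\Phi))$. This is formal once one observes that the \v{C}ech nerve of $\Phi=\bigcup_\alpha\Phi_\alpha$ takes values in $\mathrm{Closed}(\Phi)$ and that limits in ${}^{*}{}^{*}\mathrm{DG}$ are computed on underlying categories, so no geometric input is needed beyond what is already in place; the remaining ingredient, Zariski descent for $\mathrm{IndCoh}$, is standard and was invoked in its two-term form in Proposition~\ref{prop:zariski}.
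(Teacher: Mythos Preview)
Your proof is correct and follows exactly the paper's approach: the paper's one-line proof reads ``This is Zariski descent translated across Corollary~\ref{cor: viterbo functoriality},'' and you have simply unpacked that sentence in detail, making explicit the passage from the \v{C}ech diagram on the A-side to the Zariski cover on the B-side and invoking descent for $\mathrm{IndCoh}$ (with Proposition~\ref{prop:zariski} as the two-term case).
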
 
\begin{proof}
This is Zariski descent translated across Corollary \ref{cor: viterbo functoriality}. 
\end{proof}

\begin{remark} 
Corollary \ref{cor: seidel locality} was in some form  suggested by Seidel \cite{Seidel-speculations} and 
verified by Heather Lee \cite{Lee} by geometric methods in the case of Riemann surfaces.  We emphasize
that this local-to-global principle is {\em not} the same as that of \cite{GPS2}.
\end{remark}

Finally, let us mention certain twists of our constructions, which introduce geometric deformations
on one side, and gerbes on the other.  

\begin{remark}
There is a deformation which is geometric on the B-side and gerby on the A-side.  When gluing
toric varieties on the B-side, we can twist the gluing by an automorphism induced from the torus action.  This is given by data
on the double overlaps, subject to compatibility conditions on the triple overlaps.  The corresponding construction on 
the A-side is to twist $\mu sh$ as follows: on the double overlaps, the topology of the skeleton retracts to a torus, and we may
twist $\mu sh$ by tensor product with a local system on this torus (corresponding to multiplying by an element of the mirror dual 
algebraic torus).  Again this is data on double overlaps, and compatibility conditions on triple overlaps.  So we see that a B-side geometric
deformation corresponds to an A-side gerbe. 
\end{remark}

\begin{remark}
Another twist is gerby on the B-side and geometric on the A-side.  When gluing coherent sheaf categories of B-side varieties, 
we could twist the result by specifying a 
line bundle on each codimension-1 stratum and using it to twist the gluing. These choices of line bundles must satisfy a compatibility condition along
codimension-2 strata.  On the A-side, note in \cite{shende-toric} one finds that at least for a smooth fan $\Sigma$, the 
FLTZ Lagrangian $\LL(\Sigma)$ comes in a noncharacteristic family over a real torus $\Pi_\Sigma$.  
Now over a 1-stratum $I$ in $\LL(\Phi)$, we could replace $\LL(\Sigma_I) \times I$ by
a 1-parameter family of skeleta parametrized some loop $I \to \Pi_\Sigma$.  Being able to continue and attach 2-strata imposes 
a compatibility condition.  In fact, the fundamental group of the torus $\Pi_\Sigma$ can be naturally identified with 
$\mathrm{Pic}(\bT(\Sigma))$, and the monodromies in the family are mirror to the autoequivalence 
of $\Coh(\bT(\Sigma))$ given by tensor product with the corresponding line bundle.  Thus, the microsheaf category of this twisted skeleton is mirror 
to the twisted coherent sheaf category described above.  We note that in this twisted construction there is no longer 
a section of $\LL(\Phi) \to \Phi$. 
\end{remark}

\section{Singular and stacky fans} \label{sec: singular}

We now remove the smoothness hypothesis. 

\begin{theorem} \label{thm: singular}
The results of Section \ref{sec:hms} hold without any smoothness hypothesis on the fans. 
\end{theorem}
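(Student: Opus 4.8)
The plan is to mirror the strategy used in \cite{Ku} to pass from smooth to arbitrary fans, but now at the level of fanifolds and of the sheaves of categories on them rather than for a single toric variety. The key geometric input is the existence of \emph{toric resolutions}: for any fan $\Sigma$ there is a smooth fan $\widetilde{\Sigma}$ with a proper birational morphism $\bT(\widetilde{\Sigma}) \to \bT(\Sigma)$ obtained by iterated star subdivision. Refining all the fans $\Sigma_F$ appearing in a fanifold $\Phi$ compatibly --- here the compatibility conditions in Definition~\ref{defn:fanifold} must be respected, so that subdividing $\Sigma_F$ induces a coherent subdivision of each $\Sigma_{F'}$ for $F' \to F$ --- produces a ``resolved'' fanifold $\widetilde{\Phi}$ together with a map of exit-path diagrams $\Exit(\widetilde{\Phi}) \to \Exit(\Phi)$ covering it, hence a proper morphism of the sheaves of categories. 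First I would set up this subdivision carefully and check that the resulting $\widetilde\Phi$ is again a fanifold to which all of Section~\ref{sec:hms} applies.

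Next I would establish descent along the resolution on both sides. On the B-side, $\Coh(\bT(\Sigma))$ is recovered from $\Coh(\bT(\widetilde\Sigma))$ together with the (co)simplicial diagram of fiber products of the resolution --- this is proper (h-)descent for $\mathrm{IndCoh}$, available from \cite{GR2}, using that $\bT(\widetilde\Sigma) \to \bT(\Sigma)$ is proper and an isomorphism over the torus; one checks the analogous statement locally over each affine chart $\bT(\overline F)$, so that it globalizes over $\Phi$ via Proposition~\ref{prop: colimits of coh}. On the A-side, the corresponding statement is that $fsh(\Sigma) = Sh_{\LL(\Sigma)}(T_\Sigma)$ is recovered as the analogous limit over the diagram associated to the subdivision; this is exactly the sheaf-theoretic shadow of the microlocal statement that refining a fan corresponds to adding more Lagrangian branches, and it can be extracted from the microlocal computations in \cite{GS17} (the smooth case of Theorem~\ref{thm: toric mirror symmetry} intertwines the two descent diagrams). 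Then the equivalence of Theorem~\ref{thm: toric mirror symmetry}, known for smooth fans, descends to arbitrary fans: both sides of the putative equivalence $fsh^{op} \cong \Coh^! \circ \bT$ are limits of the smooth-fan values over the \emph{same} diagram, and the smooth equivalence is compatible with the structure maps, so it passes to the limit. This upgrades Theorem~\ref{thm: toric mirror symmetry}, hence Theorems~\ref{thm: local mirror symmetry} and~\ref{thm: global mirror symmetry} and Corollaries~\ref{cor: viterbo functoriality} and~\ref{cor: seidel locality}, to the general case, since all of those were deduced formally by composing with $\Exit(\Phi) \to \mathrm{Fan}^{\twoheadrightarrow}$ and taking (co)limits.

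The main obstacle I anticipate is the A-side descent statement: verifying that microsheaf categories genuinely satisfy descent along a toric subdivision, and that this descent is compatible --- as a diagram, with all its higher coherences --- with the $\mathrm{IndCoh}$ proper descent on the mirror side. Concretely one must identify, for a single star subdivision adding one ray $\rho$, the category $Sh_{\LL(\Sigma)}(T_\Sigma)$ with the equalizer of the two microlocalization-type maps from $Sh_{\LL(\widetilde\Sigma)}(T_{\widetilde\Sigma})$ built from the two maximal cones of $\widetilde\Sigma$ containing $\rho$ that were split off, matching the Čech complex of $\bT(\widetilde\Sigma) \to \bT(\Sigma)$ under the known smooth equivalence; this is essentially the content that \cite{Ku} supplies on the coherent side and that \cite{GS17} supplies microlocally, so the work is in assembling these inputs rather than proving something genuinely new. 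A secondary, more bookkeeping-type obstacle is checking that a subdivision of the fanifold can be chosen satisfying the compatibility constraints of Definition~\ref{defn:fanifold} along all flags of strata simultaneously; since star subdivisions are functorial for the face maps $\Sigma_F \to \Sigma_{F'}$ this should be routine, but it needs to be stated. Finally, I would remark that the same argument, with stacky subdivisions (i.e. allowing the resolution to be a smooth \emph{stacky} fan), yields the assertion that all results hold for stacky fans, which is the remaining claim of the theorem.
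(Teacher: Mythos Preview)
Your proposal takes a genuinely different route from the paper, and while the underlying idea (reduce to smooth fans via resolution) is the same, your implementation via proper/h-descent introduces obstacles that the paper's argument sidesteps entirely.

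The paper's approach is much lighter. First, it observes that \cite{Ku} \emph{already} establishes the object-level equivalence $\Coh(\bT(\Sigma)) \cong Sh_{\LL(\Sigma)}(T_\Sigma)^{op}$ for arbitrary fans; what is missing in the non-smooth case is only the \emph{functoriality} in $\mathrm{Fan}^{\twoheadrightarrow}$, i.e.\ the compatibility with the restriction/microlocalization maps along a cone $\sigma$, which \cite{GS17} checks only for smooth fans. To get this, the paper does not invoke any descent: it uses that a smooth refinement $\Sigma' \supset \Sigma$ gives a \emph{closed} inclusion $\LL(\Sigma) \subset \LL(\Sigma')$, hence a fully faithful embedding $Sh_{\LL(\Sigma)} \hookrightarrow Sh_{\LL(\Sigma')}$, and that \cite{Ku} already matches this with the fully faithful $\pi^*:\Coh(\bT(\Sigma)) \hookrightarrow \Coh(\bT(\Sigma'))$. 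One then writes down a cube whose inner square involves only smooth fans (hence commutes by the smooth case of Theorem~\ref{thm: toric mirror symmetry}), whose trapezoids commute for elementary reasons, and whose diagonal arrows are fully faithful; the outer square is then forced to commute. No resolved fanifold, no \v{C}ech nerve, no A-side descent theorem.

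Your approach, by contrast, runs into a real gap: the terms of the \v{C}ech nerve $\bT(\widetilde{\Sigma})^{\times_{\bT(\Sigma)} n}$ are not toric varieties associated to smooth fans (indeed they are typically singular), so the smooth case of Theorem~\ref{thm: toric mirror symmetry} does not apply to them, and your limit diagram is not one over which the known equivalence can be transported. Your ``equalizer of two microlocalization-type maps'' is also not the correct shape for the \v{C}ech complex of a proper map (and a star subdivision along a ray interior to an $n$-cone produces $n$ maximal cones, not two). Finally, you are proposing to re-prove the object-level statement that \cite{Ku} already supplies; the actual missing content is the morphism-level compatibility, and full faithfulness plus a diagram chase is both the natural and the easier tool for that.
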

\begin{proof}
It suffices to free Theorem \ref{thm: toric mirror symmetry} from the smoothness hypothesis.  We will do so working directly
with $\mu sh$ in 
place of $fsh$, as we are free to do by Proposition \ref{prop: slippery}.  Basically the point is that
we can embed the question into one involving only smooth fans by taking toric blowups.   

Let $\Sigma$ be a fan.  
Recall that a toric blowdown $\pi: \bT(\Sigma') \to \bT(\Sigma)$ 
corresponds to a subdivision of cones: the cones of $\Sigma$ are subdivided to form those of $\Sigma'$.  
For given $\Sigma$, it is always possible to subdivide to a smooth $\Sigma'$.

In particular,
$M_\Sigma = M_{\Sigma'}$, and 
$\LL(\Sigma)$ is a closed subset of $\LL(\Sigma')$, so there is a fully faithful inclusion 
$Sh_{\LL(\Sigma)}(\widehat{M}_\Sigma) \subset  Sh_{\LL(\Sigma')}(\widehat{M}_\Sigma)$.  In fact  \cite{Ku}  shows that
this inclusion is intertwined with the pullback $\pi^* : \Coh(\bT(\Sigma)) \to \Coh(\bT(\Sigma'))$.  

Now let $\sigma$ be a cone of $\Sigma$.  We write $\Phi := Nbd(\sigma)$; it is an open sub-fanifold of $\Sigma$ which contains
exactly one closed stratum (namely $\sigma$).  Then $\bT(\Phi)$ is the toric variety which is the 
closure of the orbit corresponding to $\sigma$; i.e., $\bT(\Phi) = \bT(\Sigma/\sigma)$.   We write $\Phi'$ for the 
same subset as $\Phi$ of $M_\Sigma \otimes \RR$, but with fanifold structure restricted from $\Sigma'$.  Then 
$\bT(\Phi')$ is the preimage of $\bT(\Phi)$ under the toric blowdown.  

Meanwhile, $\LL(\Phi)$ is naturally identified with an open subset of $\LL(\Sigma)$; in fact it is  
a product of a trivial factor with $\LL(\Sigma/\sigma).$  Likewise $\LL(\Phi')$ is naturally an open 
subset of $\LL(\Sigma')$.  Meanwhile $\LL(\Sigma)$ is a closed subset of $\LL(\Sigma')$ and 
correspondingly $\LL(\Phi)$ of $\LL(\Phi')$.  

For brevity we write $\mu sh(X)$ for $\Gamma(X, \mu sh_X)$.  Let us contemplate the diagram: 

\begin{equation} \label{eq: cube} 
\begin{tikzcd}
\mu sh(\LL(\Sigma))^{op} \arrow[dr, hookrightarrow] \arrow[rrrr, equal, "\mbox{\cite{Ku}}"] \arrow[ddd] & & & & \Coh( \bT(\Sigma)) \arrow[ddd]  \arrow[dl] \\ 
&  \mu sh(\LL(\Sigma'))^{op} \arrow[d] \arrow[rr,equal, "\mbox{\cite{Ku}}"] & & \Coh( \bT(\Sigma') )\arrow[d] & \\ 
& \mu sh(\LL(\Phi'))^{op}  \arrow[rr,equal, "\mbox{Thm. \ref{thm: local mirror symmetry}}"] & & \Coh(\bT(\Phi')) & \\ 
\mu sh(\LL(\Phi))^{op} \arrow[ur, hookrightarrow] \arrow[rrrr,equal, "\mbox{\cite{Ku}}"] & & & & \Coh( \bT(\Phi)) \arrow[ul] \\ 
\end{tikzcd}
\end{equation} 

Our task is to show that the outer square commutes (or more precisely to construct the natural transformation realizing the commutativity).  
It will suffice to show that the inner square and the four trapezoids commute, and that diagonal morphisms are all fully faithful. (Given commutativity, 
it is enough to show full faithfulness of the left diagonals). 

The morphisms on the right trapezoid are all pullback of coherent sheaves; it commutes.  The vertical morphisms of the left trapezoid are restriction
of microsheaves to open sets, and the diagonal morphisms are inclusions of the full subcategory of microsheaves supported on a closed subset of the
given microsupport; these obviously commute.  As we have already mentioned, commutativity of the upper trapezoid is established in \cite{Ku}.
Commutativity of the lower trapezoid follows from applying global sections over $\Phi$ to this result.  Finally, all fans in the central square 
are smooth, so its commutativity is Theorem \ref{thm: local mirror symmetry}. 
\end{proof} 

\begin{remark}
We can see from the proof that if $\Phi$ is any fanifold and $\Phi'$ is a fanifold obtained by subdividing its strata, then 
$fsh_\Phi$ is naturally a subsheaf of full subcategories of $fsh_{\Phi'}$. 
\end{remark}

We can also remove the assumption that the toric components $\bT(\Sigma)$ of the large complex structure limit variety are varieties rather than stacks, by generalizing slightly our understanding of what the data comprises a fan $\Sigma.$  There are various levels of generality of the notion of stacky fan: see \cite{stackyfans} for details. 
Kuwagaki's result \cite{Ku} is proven for the following class: 

\begin{definition}[\cite{stackyfans, Ku}]
  A {\em stacky fan} is the data of a map of lattices $\beta: \widetilde{M} \to M$ with finite cokernel, together with fans $\widetilde{\Sigma} \subset \widetilde{M} \otimes \RR$ and $\Sigma \subset M \otimes \RR$,
  such that $\beta$ induces a combinatorial equivalence on the fans.  
\end{definition}

As explained in \cite{stackyfans}, the usual GIT description of a toric variety from a fan extends in the obvious way to stacky fans, and for any cone $\sigma$ in the fan, the failure of the stacky generators of $\sigma$ to be primitive contributes an isotropy group to the corresponding stratum of the toric DM stack $\bT(\Sigma).$ 
One way to prescribe a stacky fan is to fix integral generators on the rays of an ordinary fan; these are then taken to be the images of the basis vectors of $\widetilde{M}$, and 
$\Sigma'$ is defined by lifting the cones of $\Sigma$ in the only possible way.

\begin{example} \label{ex: smooth stacky} Let $\Sigma\subset \RR^2$ be the fan whose nonzero cones are $\langle v_1\rangle, \langle v_2\rangle, \langle v_1,v_2\rangle,$ where we set $v_1 = (-1,1)$ and $v_2=(1,1),$ so that the usual toric variety associated to $\Sigma$ is the singular quadric $\{xy = z^2\}.$ 
Giving $\Sigma$ the structure of a stacky fan by fixing these generators remembers that the inclusion of lattices 
$\ZZ\langle v_1,v_2\rangle\hookrightarrow \RR\langle v_1,v_2\rangle \cap \ZZ$ has index 2, and the corresponding toric 
stack is $\CC^2/(\ZZ/2)$ (whose coarse moduli space is the singular quadric mentioned above).  
\end{example}

On the A-side, the definition of FLTZ Lagrangian $\LL(\Sigma)$ also generalizes in the obvious way to the case of stacky fans \cite{FLTZ3, Ku}, where now for a cone $\sigma,$ the failure of stacky generators to be primitive contributes a finite abelian group component to the corresponding torus 
$\sigma^\perp\subset \widehat{M},$ so that the torus $\sigma^\perp$ is no longer connected.
See \cite[Figures 9,12,13]{GS17} for images of stacky FLTZ Lagrangians.

The category $\mathrm{Fan}^{\twoheadrightarrow}$ admits an evident generalization  $\mathrm{StackyFan}^{\twoheadrightarrow}$, where now a morphism 
$(M, \widetilde{M}, \Sigma, \widetilde{\Sigma}) \to (M', \widetilde{M}', \Sigma', \widetilde{\Sigma}')$ is given by 
the choice of some cone $\widetilde{\sigma} \in \widetilde{\Sigma}$ whose image we denote $\sigma = \beta(\widetilde{\sigma})$, and compatible isomorphisms 
$$(M/\sigma, \widetilde{M}/\widetilde{\sigma}, \Sigma/\sigma, \widetilde{\Sigma}/\widetilde{\sigma}) \cong (M', \widetilde{M}', \Sigma', \widetilde{\Sigma}')$$
The appropriate notion of smooth stacky fan is that for which the corresponding toric stack is smooth as a stack; note as 
in Example \ref{ex: smooth stacky}, the underlying fan $\Sigma$ is simplicial but not necessarily smooth. 

We then define the notion of stacky fanifold  by changing Definition \ref{defn:fanifold} to require
 a map $\Exit(\Phi) \to \mathrm{StackyFan}^{\twoheadrightarrow}$.  (The comparison to normal cones still happens from $\Sigma \subset M \otimes \RR$.)

\vspace{4mm}

\section{Epilogue}  \label{sec: epilogue}

In this article we have established the homological mirror symmetry 
\[
\Fuk(\bW(\Phi)) = \Coh(\bT(\Phi))
\]
between the Fukaya category
of a certain noncompact symplectic manifold and the category of coherent sheaves on a certain singular algebraic space (or stack).  
We now outline the strategy to deform this result to a proof of mirror symmetry for smooth compact fibers of toric degenerations.  The
broad strokes of this strategy are well known to experts and have been implemented in some special cases 
\cite{Seidel-deformations, Seidel-genus2, Seidel-quartic, Sheridan-CY}; the key new point here is the r\^ole of Corollary \ref{cor: viterbo functoriality}. 

The first step is to understand how to construct a compact symplectic manifold $\overline{\bW}(\Phi)$ containing $\bW(\Phi)$ 
as the complement of a normal crossings divisor $D = D_1 \cup \ldots \cup D_n$.  We expect that such $D$ and $\overline{\bW}(\Phi)$ can be constructed 
by gluing together our local understanding of $\bW(\Phi)$ as a pair-of-pants complement.  (When $\Phi = \Sigma \cap S^n$, the existence of
such a smooth $\overline{\bW}(\Phi)$ follows from \cite{GS17, Zhou-skel}, although such a local gluing description of it does not.) 

By general principles, the Fukaya category of  $\overline{\bW}(\Phi)$ contains a deformation of the Fukaya category of $\bW(\Phi)$.  Indeed, for Lagrangians
disjoint from $D$, the essential difference between the definitions of these categories is that the former counts disks passing through $D$, and the latter does not.  
By SFT stretching, we may instead work entirely in $\bW(\Phi)$ and count disks asymptotic to certain Reeb orbits, and pair the result with the class
$\alpha \in \mathrm{SH}^\bullet(\bW(\Phi))[[Q_1, \ldots, Q_n]]$
which counts disks in a neighborhood of the divisor,  passing through the divisor.

The identification $\mathrm{SH}^\bullet(\bW(\Phi)) \cong
\mathrm{HH}^\bullet(\mathrm{Fuk}(\bW(\Phi)))$ matches this picture with the abstract deformation theory of categories, and so we may carry the class $\alpha$ across
homological mirror symmetry and ask whether the corresponding class in $\mathrm{HH}^\bullet(\mathrm{Coh}(\bT(\Phi)))[[Q_1, \ldots, Q_n]]$ 
arises from a deformation of $\bT(\Phi)$ to a smooth Calabi-Yau.
This is the key remaining point, and its resolution in existing works such as \cite{Seidel-deformations, Seidel-genus2, Seidel-quartic, Sheridan-CY} depends
on using special symmetries of the particular $\bT(\Phi)$ of interest there.  

By the  Hochschild-Kostant-Rosenberg theorem, one can pass from Hochschild cohomology to polyvector fields 
$\mathrm{H}^\bullet(\bT(\Phi), \Lambda^\bullet \mathbb{T}_{\bT(\Phi)})$ 
(using the appropriately derived version, where $\mathbb{T}$ denotes the tangent complex of $\bT(\Phi)$), and from this perspective, what must be shown is that the deformation class lives in
$\mathrm{H}^1(\bT(\Phi), \mathbb{T}_{\bT(\Phi)})$ and is a smoothing deformation there.  

Both questions are naturally studied locally on an affine cover.  Let us observe that our 
Corollary \ref{cor: viterbo functoriality} allows us to translate them into questions about $\alpha$ expressed locally in terms
of its Viterbo restrictions to a pair-of-pants cover of $\bW(\Phi)$.   
We will return to the construction of $\overline{\bW}(\Phi)$ and the aforementioned local study of the properties of $\alpha$ in a future work.

\bibliographystyle{plain}
\bibliography{refs}
\end{document}